\newcommand{\R}{\mathbb{R}}
\newcommand{\Exp}{\mathbb{E}}
\newcommand{\basis}{\mathtt{\mathbf{e}}}
\newcommand{\blx}{\mathtt{\mathbf{x}}}
\newcommand{\x}{\mathtt{\mathbf{X}}}
\newcommand{\y}{\mathtt{\mathbf{Y}}}
\newcommand{\z}{\mathtt{\mathbf{Z}}}
\newcommand{\w}{\mathtt{\mathbf{W}}}
\newcommand{\identity}{\mathtt{\mathbf{I}}}
\newcommand{\vecx}{\mathtt{\mathbf{x}}}
\newcommand{\0}{\mathtt{\mathbf{0}}}
\renewcommand{\L}{\mathcal{L}}
\newcommand{\W}{\mathtt{\mathbf{W}}}
\newcommand{\xmark}{\ding{55}}
\newcommand{\graph}{\mathcal{G}}
\newcommand{\edges}{\mathcal{E}}
\newcommand{\vertices}{\mathcal{V}}
\newcommand{\mymatrix}[1]{\begin{bmatrix}#1\end{bmatrix}}
\newcommand{\norm}[1]{\left\Vert #1 \right\Vert}
\newcommand{\abs}[1]{\left\lvert #1\right\rvert}
\newcommand{\ip}[1]{\left\langle #1\right\rangle}
\DeclareMathOperator*{\argmin}{argmin}
\newcommand{\bigO}[1]{\mathcal{O}\left(#1\right)}
\newcommand{\fronorm}[1]{\left\Vert#1\right\Vert_F^2}
\renewcommand{\vec}[1]{\mathtt{\mathbf{#1}}}
\newcommand{\stack}[2]{\stackrel{\mathclap{#1}}{#2}}
\newcommand{\defined}{\triangleq}
\definecolor{keywords}{RGB}{255,0,90}
\definecolor{comments}{RGB}{0,0,113}
\definecolor{red}{RGB}{200,0,0}
\definecolor{green}{RGB}{0,100,0}
\definecolor{light-gray}{gray}{0.95}
\lstdefinestyle{mypython}{
	language=Python, 
	basicstyle=\mlttfamily\small,
	backgroundcolor=\color{light-gray},
	numbers=left,
	numberstyle=\tiny,
	keywordstyle=\color{red},
	commentstyle=\color{green},
	stringstyle=\color{green},
	showstringspaces=false,
	identifierstyle=\color{black},
	procnamekeys={def,class},
	frame=single,
	breaklines=true,
	columns=fullflexible,
	postbreak=\mbox{\textcolor{red}{$\hookrightarrow$}\space}
}
\lstdefinestyle{mystyle}{
	breaklines=true,
	numbers=left,
	numberstyle=\small,
	numbersep=8pt,
	style=Matlab-editor,
	basicstyle=\mlttfamily\small,
	emph=[1]{for,end,break},emphstyle=[1]\color{red},
	xleftmargin=15pt,
	framexleftmargin=20pt, 
	aboveskip=10pt,
	belowskip=10pt,
	frame=single,
	breaklines=true,
	columns=fullflexible,
	postbreak=\mbox{\textcolor{red}{$\hookrightarrow$}\space}
}
\newtheorem{assumption}{Assumption}
\newtheorem{lemma}{Lemma}
\newtheorem{remark}{Remark}
\newtheorem{definition}{Definition}
\newtheorem{fact}{Fact}
\newcommand{\textfrac}[2]{\textstyle\frac{#1}{#2}}
\begin{document}
%
\title{A Decentralized Primal-Dual Framework for Non-convex Smooth Consensus Optimization}
%
%
%

\author{Gabriel~Mancino-Ball,~\IEEEmembership{}
        Yangyang~Xu,~\IEEEmembership{}
        and~Jie~Chen~\IEEEmembership{}

\thanks{Gabriel Mancino-Ball and Yangyang Xu are with Department of Mathematical Sciences, Rensselaer Polytechnic Institute, Troy, NY, 12180, USA (e-mail: mancig@rpi.edu; xuy21@rpi.edu).}
\thanks{Jie Chen is with MIT-IBM Watson AI Lab, IBM, Cambridge, MA, 02142, USA (email: chenjie@us.ibm.com).}
\thanks{This work was partly supported by the Rensselaer-IBM AI Research Collaboration, part of the IBM AI Horizons Network. Y. Xu is supported in part by NSF grants DMS-2053493 and DMS-2208394 and the ONR award N00014-22-1-2573. J. Chen is supported in part by DOE Award DE-OE0000910.}%
}

%
%

\markboth{Primal-dual framework for decentralized non-convex optimization}%
{Shell \MakeLowercase{\emph{et al.}}: Bare Demo of IEEEtran.cls for IEEE Communications Society Journals}
%



\maketitle

\begin{abstract}
In this work, we introduce ADAPD,  \textbf{A} \textbf{D}ecentr\textbf{A}lized \textbf{P}rimal-\textbf{D}ual algorithmic framework for solving non-convex and smooth consensus optimization problems over a network of distributed agents. The proposed framework relies on a novel problem formulation that elicits ADMM-type updates, where each agent first inexactly solves a local strongly convex subproblem with any method of its choice and then performs a neighbor communication to update a set of dual variables. We present two variants that allow for a single gradient step for the primal updates or multiple communications for the dual updates, to exploit the tradeoff between the per-iteration cost and the number of iterations. When multiple communications are performed, ADAPD can achieve theoretically optimal communication complexity results for non-convex and smooth consensus problems. Numerical experiments on several applications, including a deep-learning one, demonstrate the superiority of ADAPD over several popularly used decentralized methods.
\end{abstract}

\begin{IEEEkeywords}
non-convex consensus optimization, decentralized optimization, primal-dual method, decentralized learning.
\end{IEEEkeywords}

%
\IEEEpeerreviewmaketitle


\section{Introduction}\label{sec:intro}

\IEEEPARstart{G}{iven} a set of $N$ agents connected by an undirected network (graph) $\graph=(\vertices,\edges)$, where $\vertices=\{1,\dots,N\}$ denotes the set of agents and $\edges=\{(i,j)\colon\text{agent $i$ is connected to agent $j$}\}$ denotes the set of feasible local communications among agents, consensus optimization methods solve the following problem using only local computation and local communication,
	\begin{equation}\label{dco_problem}
			\small\min_{\vec{x}\in\R^p}f(\vec{x})\defined\textfrac{1}{N}\sum_{i=1}^Nf_i(\vec{x})
	\end{equation}
where each $f_i\colon\R^p\to\R$ is a differentiable, potentially non-convex, cost function known only to agent $i$.

Problem (\ref{dco_problem}) arises naturally in various scientific and engineering applications such as distributed machine learning/federated learning~\cite{mcmahan17,lian17,liang20}, decentralized matrix factorization~\cite{hong17}, network sensing and localization~\cite{chen12,shi15,lorenzo16}, and multi-vehicle coordination~\cite{abichandani11}, to name a few. The decision variable $\vec{x}$ can represent the weights of a neural network~\cite{mcmahan17}, the location of a particular object~\cite{abichandani11}, or the state of a smart grid system~\cite{shi15}, for example. Essentially, any scenario in which data is either too large or naturally distributed fits problem~\eqref{dco_problem}.
\subsection{Problem Formulation}
It is well known~\cite{shi15,scaman17} that if $\graph$ is connected, the following problem is equivalent to~\eqref{dco_problem}:
	\begin{equation}\label{dco_problem:reformulation1}
		\small\min_{\x}\enskip F(\x)\enskip\text{subject to }\W\x=\x
	\end{equation}
where $\W$ is a \emph{mixing matrix}~\cite{shi15,yuan16,sun20} that satisfies the conditions in Assumption~\ref{assumption:mixing_matrix} below, and
		\begin{equation}\label{objective}
			\small\x\defined\mymatrix{\vecx_1 & \dots & \vecx_N}^\top\in\R^{N\times p},\enskip F(\x)\defined\textfrac{1}{N}\sum_{i=1}^Nf_i(\vecx_i),
		\end{equation}
is the concatenation of local decision variables and the global objective function, respectively, written in matrix notation. Here, $\vec{x}_i$ is agent $i$'s local copy of the global variable $\vec{x}$, and $\W\in\R^{N\times N}$ represents the connectivity structure of the network $\graph$.
\begin{assumption}\label{assumption:mixing_matrix}
	The mixing matrix, $\W\in\R^{N\times N}$, satisfies,
	
	\begin{enumerate}[\itshape(i)]
	
		\item \textbf{(Decentralized property)} $w_{ij}>0$ if $(i,j)\in \edges$, otherwise $w_{ij}=0$,
		
		\item \textbf{(Symmetric property)} $\W=\W^\top,$
	
		\item \textbf{(Null space property)} $\mathrm{null}\left(\identity-\W\right)=\mathrm{span}\{\basis\}$, where $\basis\in\R^N$ is the vector of all ones, and
		
		\item \textbf{(Spectral property)} the eigenvalues of $\W$ lie in the range $(-1,1]$ and can be ordered as
		{\small
			\begin{align*}
				-1<\lambda_N(\W)\le\dots\le\lambda_2(\W)<\lambda_1(\W)=1.
			\end{align*}}
	\end{enumerate}
\end{assumption}


Several common choices for mixing matrices are presented in~\cite{shi15}.

\begin{itemize}
		
	\item[-] \emph{Laplacian-based constant edge weight matrix,}
		\begin{equation}\label{mixing_mat:laplacian}
			\small\W=\identity-\textfrac{\textbf{L}}{\tau}
		\end{equation}
		where $\textbf{L}$ is the Laplacian matrix of $\graph$ and $\tau>\textfrac{1}{2}\lambda_1(\textbf{L}).$ Here, $\lambda_1(\textbf{L})$ is the largest positive eigenvalue of $\textbf{L}.$ If the eigenvalues of $\textbf{L}$ are unknown, by the Gershgorin circle theorem one can use $\tau=\max_{i\in\vertices}\{\abs{\mathcal{N}_i}\}+\epsilon$, for some $\epsilon>0$, where $\mathcal{N}_i\triangleq\{j\colon(i,j)\in\edges\}$ is the set of agents that can communicate with agent $i$.
	
	\item[-] \emph{Metropolis constant edge weight matrix,} for some $\epsilon>0$,
			\begin{equation}\label{mixing_mat:metropolis}
				\small w_{ij}=\begin{cases}\textfrac{1}{\max\{\abs{\mathcal{N}_i},\abs{\mathcal{N}_j}\}+\epsilon},&(i,j)\in\edges,\\0,&(i,j)\not\in\edges\text{ and }i\ne j,\\1-\sum_{k\in\vertices}w_{ik},&i=j.\end{cases}
			\end{equation}

	\item[-] \emph{Symmetric fastest distributed linear averaging matrix}, (FDLA), which is a matrix that achieves the fastest information diffusion through $\graph$ and is obtained by solving a semidefinite program~\cite{xiao04}.
		
\end{itemize}

Note that the constraint formulation $\W\x=\x$ in~\eqref{dco_problem:reformulation1} is not the only choice for a consensus problem. Under Assumption~\ref{assumption:mixing_matrix}, an equivalent consensus constraint adopted by others~\cite{wei13,hong17,wang20} is $\vec{x}_i=\vec{x}_j$ for all $(i,j)\in\edges.$ This constraint is an \emph{edge-based} constraint, whereas we consider a \emph{vertex-based} constraint. When a primal-dual approach is designed, if $\graph$ is dense, then a vertex-based constraint introduces fewer dual variables than an edge-based constraint. Further, an optimal $\W$ can be designed, given $\graph$~\cite{xiao04}.

A vital quantity for our analysis comes from the spectral properties of $\graph$. We define
	\begin{equation}\label{spectral_gap}
		\small\rho\triangleq \norm{\W-\textfrac{1}{N}\basis\basis^\top}_2=\max\left\{\abs{\lambda_2\left(\W\right)},\abs{\lambda_N\left(\W\right)}\right\}\in[0,1).
	\end{equation}
The metric in~\eqref{spectral_gap} is one way to measure the connectivity of $\graph$, where $\rho\approx0$ implies good connectivity. 

Under Assumption~\ref{assumption:mixing_matrix}, particularly $\mathrm{null}\big(\sqrt{\identity-\W}\big)=\mathrm{null}\big(\identity-\W\big)=\mathrm{span}\{\basis\}$, a further equivalent reformulation to~\eqref{dco_problem} is 
	\begin{equation}\label{dco_problem:reformulation2}
		\small\min_{\x}\enskip F(\x)\enskip\text{subject to }\sqrt{\identity-\W}\x=\0,
	\end{equation}
where $\0\in\R^{N\times p}$ is the matrix of all zeros. A benefit of this formulation is that the constraint $\sqrt{\identity-\W}\x=\0$ can now be incorporated into a penalty term, $\textfrac{1}{2\eta}\fronorm{\sqrt{\identity-\W}\x}$, where $\eta>0$ is a penalty parameter. The gradient associated with this term is $\textfrac{1}{\eta}\left(\identity-\W\right)\x$, which can be computed by a single neighbor communication.

One way to solve~\eqref{dco_problem:reformulation2} is to form the augmented Lagrangian with dual variables $\y\defined\mymatrix{\vec{y}_1&\dots&\vec{y}_N}^\top$ and perform a primal-dual type update as in IDEAL~\cite{arjevani20}. The issue with this approach is that the communication and computation phases are inherently coupled, illustrated as follows. The classic primal-dual updates at iteration $k$ used to solve~\eqref{dco_problem:reformulation2} are 
	{\small
	\begin{align}
		\x^{k+1}&=\argmin_{\x}F(\x)+\ip{\sqrt{\identity-\w}\y^k,\x}+\textfrac{1}{2\eta}\fronorm{\sqrt{\identity-\w}\x},\nonumber\\
		\sqrt{\identity-\w}\y^{k+1}&=\sqrt{\identity-\w}\y^k+\textfrac{1}{\eta}(\identity-\w)\x^{k+1}.\label{naive_primal_dual}
	\end{align}
    }If a first-order method is used to solve the $\x$ subproblem, then part of the gradient will contain $\w\x$ \emph{at each gradient computation}, thus for every gradient computed, one neighbor communication must be performed.

Another way to solve~\eqref{dco_problem:reformulation2}, as suggested by the Prox-PDA method~\cite{hong17}, is to introduce an additional proximal term of the form $\frac{1}{2\eta}\norm{\x-\x^k}_{\vec{B}^\top\vec{B}}^2$, where $\vec{B}^\top\vec{B}=-\left(\identity-\W\right)+\vec{D}$ with some diagonal matrix $\vec{D}$. This negates the neighbor communication required in the $\x$ subproblem of~\eqref{naive_primal_dual}, but introduces a new parameter $\vec{D}$ that impacts this method's numerical performance.

Interestingly, Prox-PDA with a special choice of $\vec{B}^\top\vec{B}$ recovers the distributed ADMM algorithm~\cite{shi14} for consensus optimization with edge-based constraint; see Appendix~\ref{appendix:prox} for details. Hence, part of this work serves to compare ADMM-type methods derived from using edge-based constraints versus vertex-based constraints for solving problem~\eqref{dco_problem} in a decentralized manner. Our numerical findings in Section~\ref{sec:numerical} indicate that our below derived inexact ADMM gives better performance than distributed ADMM~\cite{shi14}.

To remove the addition of $\vec{D}$ from Prox-PDA, yet still decouple the communication and computation phases of traditional primal-dual methods, we propose adding an extra variable (and constraint), leading to the following formulation:
	\begin{equation}\label{decentralized_problem}
		\small\min_{\x,\x_0}\enskip F(\x)\enskip\text{subject to}\enskip\x=\x_0,\enskip\sqrt{\identity-\W}\x_0=\0.
	\end{equation}
Governed now by two blocks of primal variables, a natural approach to solve~\eqref{decentralized_problem} would be to use an ADMM-type update~\cite{boyd11,wang19}, but as argued in Section~\ref{sec:algo}, the classic ADMM cannot be implemented in a decentralized manner to solve~\eqref{decentralized_problem}. Hence, we are motivated to design a method that: (i) solves~\eqref{decentralized_problem} using only decentralized communication and local gradient computations and (ii) achieves the optimal communication complexity results established in~\cite{sun19}.

We state the technical assumptions on $F$ below.
\begin{assumption}\label{assumption:objective_function}
	The objective function $F$ in (\ref{decentralized_problem}) satisfies:
	\begin{enumerate}[\itshape(i)]
		\item $F$ is $L$-smooth, i.e. there is $0<L<\infty$ such that
			\begin{equation}\label{assumption:smoothness}
				\small\norm{\nabla F(\x)-\nabla F(\y)}_F\le L\norm{\x-\y}_F,\enskip\forall\enskip\x,\y\in\R^{N\times p}.
			\end{equation}
		\item $F$ is lower bounded, i.e. there is $\underline{f}$ such that
			\begin{equation}\label{assumption:lower_bound}
				\small-\infty<\underline{f}\le F(\x),\enskip\forall\enskip\x\in\R^{N\times p}.
			\end{equation}
	\end{enumerate}
\end{assumption}
The gradient of $F$, written in matrix notation, is
	\begin{equation}\label{objective_gradient}
		\small\nabla F(\x)\defined\textfrac{1}{N}\mymatrix{\nabla f_1(\vecx_1)&\dots&\nabla f_N(\vecx_N)}^\top\in\R^{N\times p}.
	\end{equation}

Note that the assumptions (\ref{assumption:smoothness}) and (\ref{assumption:lower_bound}) are standard in non-convex optimization. If each $f_i$ is $L_i$-smooth then $L\ge\max_{i}L_i$ and the lower boundedness assumption is equivalent to the existence of a minimizer of $F$.

Before demonstrating a brief literature review, we state a standard definition~\cite{hong17,sun19,sun20} for stationary points of~\eqref{dco_problem}.
\begin{definition}[$\boldsymbol{\varepsilon}$-stationary point]\label{def:stationarity}
	A matrix $\x\in\R^{N\times p}$ is called an $\varepsilon$-stationary point of (\ref{dco_problem}) if 
 	\begin{equation}\label{def:stationarit:eqn}
 		\small\norm{\textfrac{1}{N}\sum_{i=1}^N\nabla f_i(\bar{\vec{x}})}_2^2+\fronorm{\x-\bar{\x}}\le\varepsilon
	 \end{equation}
 where $\bar{\vec{x}}\triangleq\textfrac{1}{N}\basis^\top\x$ is the average vector across the $N$ rows of $\x$ and $\bar{\x}\triangleq\textfrac{1}{N}\basis\basis^\top\x$ is a matrix version of this same average.
\end{definition}

\subsection{Related Works}

Distributed computing dates back decades ago to the seminal work~\cite{bertsekas89}. \emph{Centralized} computing paradigms, where $\W=\textfrac{1}{N}\basis\basis^\top$ in~\eqref{dco_problem:reformulation1} have been heavily studied; when each $f_i$ is convex, methods such as ADMM~\cite{boyd11} and FedAVG~\cite{li20} have theoretical convergence guarantees. The focus of this paper is on \emph{decentralized} computing paradigms. Methods such as DGD~\cite{yuan16} and the distributed subgradient method in~\cite{nedic09} have been shown to have sublinear convergence in the convex differentiable and convex non-differentiable settings, respectively. When strong convexity is assumed, the NEAR-DGD~\cite{berahas19} method improved the convergence result of DGD by allowing for multiple communications during each iteration. If $f_i$ has Lipschitz continuous gradient and is strongly convex, ADMM~\cite{shi15} and Acc-DNGD-SC~\cite{qu19} exhibit linear convergence. The EXTRA~\cite{shi15} method also exhibits linear convergence if the global function $f$ is restricted strongly convex\footnote{A convex, differentiable function $h\colon\R^p\to\R$ is restricted strongly convex about a point $\tilde{\vec{x}}$ with parameter $\mu>0$ if $\ip{\nabla h(\vec{x})-\nabla h(\tilde{\vec{x}}),\vec{x}-\tilde{\vec{x}}}\ge\mu\norm{\vec{x}-\tilde{\vec{x}}}_2^2$ for all $\vec{x}\in\R^p.$}. SSDA~\cite{scaman17} and the recent distributed FGM~\cite{urbine21} were designed for convex problems where the gradients of the Fenchel conjugates\footnote{The Fenchel conjugate of a convex function $h\colon\R^p\to\R$ is $h^*(\vec{y})\triangleq\sup_{\vec{x}}\ip{\vec{x},\vec{y}}-h(\vec{x}).$} of the objective functions $f_i$ are computable, with the later providing complexity results when only approximate gradients are computable. IDEAL~\cite{arjevani20} and FlexPD~\cite{mansoori21} are recent primal-dual methods that perform many, or just a few, local neighbor communications per local primal update, respectively.

Of particular interest to us are algorithms dealing explicitly with non-convex local cost functions, e.g. neural networks. When each $f_i$ has Lipschitz continuous gradient, the celebrated DGD~\cite{zeng18} has been shown to converge using diminishing step-sizes with a rate $\bigO{(1-\rho)^{-2}K^{-1}}$, where $\rho$ is defined in~\eqref{spectral_gap} and $K$ is the iteration number. As indicated in the introduction, Prox-PDA~\cite{hong17} is a primal-dual method that is closely related to non-convex ADMM~\cite{hong16} which converges at a rate $\bigO{K^{-1}}$, but has superior numerical performance when compared to DGD. SONATA/NEXT~\cite{lorenzo16,scutari19} is a primal method that exhibits the same convergence rate as Prox-PDA and incorporated a potentially non-smooth but convex regularizer into the objective. While SONATA is applicable to a larger class of problems, it needs to take step-sizes proportional to $N^{-1}$ for convergence; as $N\to\infty$, SONATA's performance can suffer because of this requirement. If the Chebyshev communication protocol~\cite{auzinger11} is used, SONATA additionally can achieve the $\bigO{(1-\rho)^{-0.5}K^{-1}}$ rate, but SONATA must communicate two variables for every algorithm update. Both Prox-PDA and SONATA require agents to solve a local strongly convex subproblem. Our proposed framework can achieve a convergence rate of $\bigO{(1-\rho)^{-0.5}K^{-1}}$ when multiple neighbor communications are performed; note this is optimal for the class of smooth nonconvex problems~\cite{sun19}.

Methods that use stochastic gradients have also been heavily studied. Adapting DGD to  stochastic updates yields D-PSGD~\cite{lian17} which is shown to have a convergence rate of $\bigO{K^{-0.5}}$.
Recent works such as $D^2$~\cite{tang18}, DSGT~\cite{zhang20gradtrack}, and D-GET~\cite{sun20} make use of stochastic gradient updates mixed with a gradient tracking scheme and draw inspiration from their non-stochastic and centralized counterparts~\cite{shi15,nguyen17}.  $D^2$ improves the convergence of D-PSGD, but requires more restrictive assumptions on the eigenvalues of $\W$. The convergence rate of DSGT was shown to be $\bigO{K^{-0.5}+(1-\rho)^{-3}K^{-1}}$ in~\cite{zhang20gradtrack} and later improved to $\tilde{\mathcal{O}}\left(K^{-0.5}+(1-\rho^2)^{-1}K^{-1}\right)$ in~\cite{koloskova21}. D-GET is able to achieve a rate $\bigO{K^{-1}}$ but requires a full gradient computation every few iterations; GT-SARAH~\cite{xin21sarah} achieves the same rate but removes the full gradient computation. The authors in~\cite{yi20} develop a primal-dual method with convergence rate $\bigO{K^{-0.5}},$ where each agent computes one local stochastic gradient per update. The recent SPPDM~\cite{wang21} can also achieve a stochastic $\varepsilon$-stationary point in $\bigO{\varepsilon^{-1}}$ iterations using stochastic gradients and incorporates a potentially non-smooth but convex regularizer into the objective; SPPDM requires a mini-batch of size $\Omega\left(\varepsilon^{-1}\right)$ to achieve this rate. We remark that our framework can exhibit the optimal convergence rate when \emph{deterministic} gradients are used, yet we include relevant decentralized stochastic methods here for sake of completeness.

Additional algorithms to consider are asynchronous algorithms that do not require a synchronous communication step and algorithms that use time-varying mixing matrices and/or mixing matrices that do not satisfy Assumption~\ref{assumption:mixing_matrix}. Some prominent asynchronous algorithms include AD-PSGD~\cite{lian17_async}, the Asynchronous Primal-Dual method in~\cite{wu17}, APPG~\cite{zhang21}, and the asynchronous ADMM~\cite{wei13, hong17admm}. Algorithms that handle different network structures from those considered here have also been considered: Push-Pull~\cite{pu20} handles directed graphs and DIGing~\cite{nedic17} is a gradient tracking algorithm that works for network structures where $\W$ changes at every iteration. While these scenarios are certainly interesting, we focus on synchronous updates and undirected graphs.

\subsection{Summary of Contributions}

Our main contributions are listed below:

\begin{itemize}

	\item[-] We motivate the novel problem formulation of~\eqref{decentralized_problem} for solving the non-convex and smooth decentralized consensus optimization problem. We propose ADAPD,  \textbf{A} \textbf{D}ecentr\textbf{A}lized \textbf{P}rimal-\textbf{D}ual algorithmic framework for solving such problem.  Our framework is based on performing inexact ADMM-type updates by the augmented Lagrangian function of problem~\eqref{decentralized_problem}. Two variants to our framework: ADAPD-OG (ADAPD-\textbf{O}ne \textbf{G}radient) and ADAPD-MC (ADAPD-\textbf{M}ultiple \textbf{C}ommunications) are presented. ADAPD-OG performs a single gradient step instead of inexactly solving a local strongly convex subproblem. ADAPD-MC allows each agent to communicate multiple times with their neighbors for each update. These variants allow for agents to optimize the balance between performing local computation and local communication.
	
	\item[-] We prove that ADAPD and ADAPD-OG converge to an $\varepsilon$-stationary point, see (\ref{def:stationarit:eqn}), in $\bigO{L(1-\rho)^{-2}\varepsilon^{-1}}$ neighbor communications. When the MC variant is used, this complexity is reduced to $\bigO{L(1-\rho)^{-0.5}\varepsilon^{-1}}$, which is optimal for smooth, non-convex consensus optimization problems~\cite{sun19}. For both ADAPD and ADAPD-OG, a key ingredient of our analysis is defining a Lyapunov function that decreases with every iteration.
	 
	 \item[-] We compare ADAPD on several non-convex problems to state-of-the-art methods such as DGD~\cite{zeng18}, Prox-PDA~\cite{hong17}, D-PSGD~\cite{lian17}, DSGT~\cite{zhang20gradtrack}, D-GET~\cite{sun20}, and SPPDM~\cite{wang21}. Four experiments are conducted in total: two using deterministic gradients and two using stochastic gradients. In all cases, ADAPD demonstrates numerical superiority over these popularly used methods.

\end{itemize}

\subsection{Notation}

We use bold face letters such as $\x$ and $\vec{x}$ to denote a matrix and a vector, respectively. Let $x_{ij}$ denote the element in the $i^{th}$ row and $j^{th}$ column of  the matrix $\x.$ The Frobenius norm of a matrix is denoted $\norm{\cdot}_F$, while the Euclidean norm of a vector is denoted $\norm{\cdot}_2.$ Define the standard matrix inner product of $\vec{A},\vec{B}\in\R^{N\times p}$ to be $\ip{\vec{A},\vec{B}}\triangleq\sum_{i=1}^N\sum_{j=1}^pa_{ij}b_{ij}.$ For a given symmetric matrix $\vec{U}\in\R^{N\times N},$ we denote $\norm{\vec{A}}_{\vec{U}}^2\triangleq\ip{\vec{A},\vec{U}\vec{A}}.$ If $\vec{U}$ is positive definite, then $\norm{\vec{A}}_{\vec{U}}^2$ defines a norm. For square matrices $\vec{A},\vec{B}\in\R^{N\times N},$ define the matrix inequality $\vec{A}\preccurlyeq\vec{B}$ to hold if and only if $\vec{B}-\vec{A}$ is positive semi-definite.

\section{ADAPD Framework}\label{sec:algo}

To solve (\ref{decentralized_problem}), we employ the augmented Lagrangian function with penalty parameter $0<\eta<\textfrac{1}{L}$, which is
{\small
	\begin{equation}\label{augmentedLagrangian}
	\begin{split}
		&\L_{\eta}(\x,\x_0;\y,\z)=F(\x)+\ip{\y,\x-\x_0}+\textfrac{1}{2\eta}\fronorm{\x-\x_0}\\
		&\quad+\ip{\z,\sqrt{\identity-\W}\x_0}+\textfrac{1}{2\eta}\fronorm{\sqrt{\identity-\W}\x_0}
	\end{split}
	\end{equation}
}with dual variables
{\small
	\begin{equation}\label{dualvariables}
		\y\defined\mymatrix{\vec{y}_1&\dots&\vec{y}_N}^\top,\enskip\z\defined\mymatrix{\vec{z}_1&\dots&\vec{z}_N}^\top\in\R^{N\times p}.
	\end{equation}
}The classic ADMM~\cite{boyd11} approach for solving (\ref{decentralized_problem}) performs the following updates using \eqref{augmentedLagrangian}:
{\small
	\begin{align}
			\x^{k+1}&=\argmin_{\x}\L_\eta(\x,\x_0^{k};\y^k,\z^k)\label{x_update_classic}\\
			\x_0^{k+1}&=\argmin_{\x_0}\L_\eta(\x^{k+1},\x_0;\y^k,\z^k)\label{x0_update_classic}\\
			\y^{k+1}&=\y^k+\beta_1\left(\x^{k+1}-\x_0^{k+1}\right)\label{y_update_classic}\\
			\z^{k+1}&=\z^{k}+\beta_2\sqrt{\identity-\W}\x_0^{k+1}\label{z_update_classic}
	\end{align}
}where $\beta_1,\beta_2>0$ are the step-sizes for the dual variables.

Notice that in practice, the exact minimizer of (\ref{x_update_classic}) is difficult to find; thus a natural alternative to (\ref{x_update_classic}) would be to perform an \emph{inexact} update to the local decision variable as in~\cite{eckstein18,kumar19}. This would lead to a computationally efficient way to solve the local subproblem \eqref{x_update_classic} that fully utilizes local computing power without overburdening the agents.

Further, notice that the optimal solution to (\ref{x0_update_classic}) involves solving 
\begin{equation}\label{eq:exact-update-X0}
\textfrac{1}{\eta}\big(2\identity-\W\big)\x_0=\textfrac{1}{\eta}\x^{k+1}+\y^k-\sqrt{\identity-\W}\z^k.
\end{equation} 
It should be stated that $\big(2\identity-\W\big)^{-1}$ exists (by Assumption~\ref{assumption:mixing_matrix}(iv)). However, it is not easy to solve in a decentralized setting: since $\W$ is not diagonal, solving (\ref{x0_update_classic}) would involve another iterative method (e.g. Jacobi method), which may require many communications to find the exact minimizer. To remedy this, we note that~\eqref{eq:exact-update-X0} is a linear equation and apply a simple split for the unknown $\x_0$:
\begin{equation}\label{x0_update_pre}
\textfrac{1}{\eta}2\x_0^{k+1}-\textfrac{1}{\eta}\W\x_0^k=\textfrac{1}{\eta}\x^{k+1}+\y^k-\sqrt{\identity-\W}\z^k.
\end{equation}%
Remarkably, such a rough estimate for solving the $\x_0$ subproblem based on the past iterate $\x_0^k$ still guarantees convergence, based on the following intuition. Let $\widehat{\x}_0^{k+1}$ be the solution of \eqref{eq:exact-update-X0}. Then the one gradient step in \eqref{x0_update_pre} replaces the unknown term $\W\widehat{\x}_0^{k+1}$ by $\W\x_0^k$. Our analysis will show that $\x_0^{k+1}-\x_0^k \to \mathbf{0}$. Hence, the one-step gradient descent update will become a close approximation to the exact update, and thus it can still guarantee convergence.

Additionally, the $\z$ update in (\ref{z_update_classic}) cannot be implemented in a decentralized manner, as $\sqrt{\identity-\W}$ in general will not preserve the underlying network topology. However, notice that if $\z^0\in\mathrm{range}(\sqrt{\identity-\W}),$ then $\z^k\in\mathrm{range}(\sqrt{\identity-\W})$, for all $k\ge0$ from \eqref{z_update_classic}. Hence, we can multiply $\sqrt{\identity-\W}$ to the left of all terms in \eqref{z_update_classic} and obtain the equivalent update
\begin{equation}\label{z_update_pre}
\sqrt{\identity-\W}\z^{k+1}=\sqrt{\identity-\W}\z^k+\textfrac{1}{\eta}(\identity-\W)\x_0^{k+1}.
\end{equation}
Doing so allows us to use $\tilde{\z}^k=\sqrt{\identity-\W}\z^k$  to simplify all relevant terms in~\eqref{x0_update_pre} and~\eqref{z_update_pre}.

To summarize, defining $\beta_1=\beta_2=\textfrac{1}{\eta}$, we propose the following modifications to (\ref{x_update_classic})-(\ref{z_update_classic}):
{\small
	\begin{align}
		\x^{k+1}&\approx\argmin_{\x}\L_\eta(\x,\x_0^{k};\y^k,\z^k)\label{x_update}\\
		\x_0^{k+1}&=\textfrac{1}{2}\left(\W\x_0^k+\x^{k+1}+\eta\big(\y^k-\tilde{\z}^k\big)\right)\label{x0_update}\\
		\y^{k+1}&=\y^k+\textfrac{1}{\eta}\left(\x^{k+1}-\x_0^{k+1}\right)\label{y_update}\\
		\tilde{\z}^{k+1}&=\tilde{\z}^k+\textfrac{1}{\eta}(\identity-\W)\x_0^{k+1}.\label{z_update}
	\end{align}
}
On the surface, there are two multiplications with $\W$ in (\ref{x_update})-(\ref{z_update}). However, they involve the same variable $\x_0$ differing in only two consecutive iterations. This implies that except for the first iteration, our framework requires only one multiplication by $\W$ per iteration and hence only one communication among agents (for networks where multiple communications are permitted, see Section~\ref{sec:variants}).

We make two remarks on the solution of the local subproblem~\eqref{x_update}.
  \begin{remark}
For $\eta<\textfrac{1}{L}$, the $\x$ update performed in~\eqref{x_update} is accomplished by inexactly solving the following strongly convex local subproblem for all agents $i=1,\dots,N$,
	\begin{equation}\label{x_subproblem}
\textstyle		{\small \min_{\vec{x}_i}f_i(\vec{x}_i)+\ip{\vec{y}_i^k,\vec{x}_i-\vec{x}_{0,i}^k}+\frac{1}{2\eta}\norm{\vec{x}_i-\vec{x}_{0,i}^k}_2^2,}
	\end{equation}
where the inexactness is quantified by the following error quantities. We require
{\small
	\begin{equation}\label{x_optimality}
		\begin{split}
			&\textstyle \norm{\vec{r}_i^{k+1}}_2^2\le\textfrac{\epsilon_{k+1}}{N},\text{ with }\\
			\vec{r}_i^{k+1}\triangleq \nabla f_i(\vec{x}_i^{k+1})&+\vec{y}_i^k+\textfrac{1}{\eta}(\vec{x}_i^{k+1}-\vec{x}_{0,i}^k),\enskip\forall k\ge0,
		\end{split}
	\end{equation}
}to hold for the local error at iteration $k$ and for the cumulative error we require,
	\begin{equation}\label{error_summation}
	\textstyle	\small\sum_{k=1}^\infty\epsilon_{k+1}=\bigO{1-\rho}.
	\end{equation}
  \end{remark}
	\begin{remark}
		Similar to the results in~\cite{zhang21fedpd}, we can require,
			\begin{equation}\label{stochastic_x_optimality}
		\textstyle		\small\Exp\left[\norm{\vec{r}_i^{k+1}}_2^2\right]\le\textfrac{\epsilon_{k+1}}{N},\enskip\forall k\ge0,\text{ and}~\eqref{error_summation}
			\end{equation}
	and the theoretical results are not significantly affected. This allows for stochastic solvers to be used by each local agent.
	\end{remark}
From an agent's point of view, (\ref{x_update})-(\ref{z_update}) can be summarized in Alg.~\ref{algo:} below.

\begin{algorithm}
	\DontPrintSemicolon
	
	\KwIn{\small $\x^0$, $\x_0^0$, $\y^0$, $\tilde{\z}^0=\sqrt{\identity-\W}\z^0$ with ${\z}^0\in\mathrm{range}\big(\sqrt{\identity-\W}\big)$, $K$, $\eta>0,$ a non-increasing sequence $\{\epsilon_k\}_{k=1}^{K}.$}
	
	\SetInd{0.25em}{0.25em}
	\For{$k=0,\dots,K-1$}{
	
		\For{$i=1,\dots,N$ in parallel}{
			Update $\small\vec{x}_i$ until $\small\norm{\vec{r}_i^{k+1}}_2^2\le\textfrac{\epsilon_{k+1}}{N}$ with $\small\vec{r}_i^{k+1}$ in \eqref{x_optimality}\; 
			
			\eIf{$k=0$}
				{
				$\small\vec{x}_{0,i}^{k+1}\gets\textfrac{1}{2}\left(\sum_{j\in\mathcal{N}_i\cup\{i\}}w_{ij}\vec{x}_{0,j}^{k}+\vec{x}_i^{k+1}+\eta(\vec{y}_i^k-\tilde{z}_i^k)\right)$\;
				}
				{
				$\small\vec{x}_{0,i}^{k+1}\gets\textfrac{1}{2}\left(\vec{x}_i^{k+1}+\vec{x}_{0,i}^k+\eta(\vec{y}_i^k-2\tilde{\vec{z}}_i^{k}+\tilde{\vec{z}}_i^{k-1})\right)$ \;
			}
						
			$\small\vec{y}_i^{k+1}\gets\vec{y}_i^k+\textfrac{1}{\eta}\left(\vec{x}_i^{k+1}-\vec{x}_{0,i}^{k+1}\right)$\;
						
			$\small\tilde{\vec{z}}_i^{k+1}\gets\tilde{\vec{z}}_i^{k}+\textfrac{1}{\eta}(1-w_{ii})\vec{x}_{0,i}^{k+1}-\textfrac{1}{\eta}\sum_{j\in\mathcal{N}_i}w_{ij}\vec{x}_{0,j}^{k+1}$ \;
			
		}
	}

	\caption{\textbf{ADAPD} (agent view)}
	
	\label{algo:}
\end{algorithm}

Recall that we obtain a unique sequence $\{\z^k\}_{k=1}^K$ in $\mathrm{range}\big(\sqrt{\identity-\W}\big)$ from the generated $\tilde\z$-sequence. Therefore, without causing confusion, we can use the corresponding $\z$-sequence in our analysis. Notice that our framework is sufficiently flexible to allow each agent to use different local subroutines to solve (\ref{x_subproblem}). In networks where the computing power of the agents differs vastly (see, e.g.~\cite{mcmahan17}), this flexible framework allows for agents with higher compute capabilities to fully utilize their compute power whereas agents with lower compute capabilities are not expected to utilize heavy optimization tools to solve their local subproblems. We now describe two variants/modifications to Alg.~\ref{algo:} that can be employed if the computational constraints and/or the communication constraints are relaxed.

\subsection{Framework Variants}\label{sec:variants}


\subsubsection{Computation Variant}

In scenarios where agents may face a lack of computational resources to solve (\ref{x_subproblem}), it may be inefficient to compute $\nabla f_i(\cdot)$ many times. To remedy this, we propose ADAPD-OG (\textbf{O}ne \textbf{G}radient), which requires each agent to only compute a single gradient during every iteration. More precisely, we do the update:
	\begin{equation}\label{x_single_step_update}
		\small\x^{k+1}=\x_0^k-\eta\left(\nabla F(\x^k)+\y^k\right).
	\end{equation}
Notice that if $\widehat{\vec{x}}_i^{k+1}$ is the exact solution of \eqref{x_subproblem}, then $\widehat{\x}^{k+1}=\x_0^k-\eta\left(\nabla F(\widehat{\x}^{k+1})+\y^k\right)$, which is a backward step because $\nabla F(\widehat{\x}^{k+1})$ is unknown. The update in \eqref{x_single_step_update} is a forward step. Since we can show $\|\x^{k+1} - \x^{k}\|_F\to 0$, the forward step will eventually be a close approximation of the backward step, and thus we can expect convergence. Alg.~\ref{algo:single_step} displays the pseudocode of ADAPD-OG.

\begin{algorithm}
	\DontPrintSemicolon
	
	\KwIn{\small$\x^0$, $\x_0^0$, $\y^0$, $\tilde{\z}^0=\sqrt{\identity-\W}\z^0$ with ${\z}^0\in\mathrm{range}\big(\sqrt{\identity-\W}\big)$, $K$, $\eta>0$.}
	
	\SetInd{0.25em}{0.25em}
	\For{$k=0,\dots,K-1$}{
	
		\For{$i=1,\dots,N$ in parallel}{
		
			$\small\vec{x}_i^{k+1}\gets\vec{x}_{0,i}^k-\eta\left(\nabla f_i(\vec{x}_i^k)+\vec{y}_i^k\right)$\;
			
			Perform lines 4 - 9 in Alg.~\ref{algo:} to update $\vec{x}_{0,i}^{k+1},\vec{y}_i^{k+1},$ and $\vec{z}_i^{k+1}$\;
		}
	}

	\caption{\textbf{ADAPD-OG} (agent view)}
	
	\label{algo:single_step}
\end{algorithm}

\subsubsection{Communication Variant}

For convergence, it may be practical to allow agents more than one communication during each ADAPD update. We denote the following multiple communication modification (either to Alg.~\ref{algo:} or Alg.~\ref{algo:single_step}) with appending an ``-MC'' (\textbf{M}ultiple \textbf{C}ommunications) to the algorithm name.

As stated in the introduction, our analysis depends on the value of $\rho$ which measures how quickly an average value can be computed in a decentralized manner. In a \emph{centralized} computing paradigm, where each agent is allowed to communicate with all other agents either directly or via a central server, the mixing matrix $\W$ can be replaced by the averaging matrix $\textfrac{1}{N}\basis\basis^\top.$ In this instance $\rho=0$, which can lead to the fastest convergence for our algorithm in both theory and practice. However, by Assumption~\ref{assumption:mixing_matrix}(i), the communication pattern of the agents is limited to performing only neighbor communications.

One straightforward modification to improve our method's dependence on $\rho$ is to replace $\W$ by $\W^R$ ($R$ denotes a power, not an iteration number here) for the $\tilde{\z}$ update in (\ref{z_update}) and the computation of $\x_0^1$, where $R\ge1$ is an integer. Notice that $\W^R$ satisfies Assumption~\ref{assumption:mixing_matrix}(ii)-(iv). Thus all our theoretical results  hold for this MC modification. Since $\rho(\W^R)=\|\W^R-\textfrac{1}{N}\basis\basis^\top\|_2= \rho(\W)^R$, this MC modification can lead to a smaller $\rho$ if $R>1$. However, if $\rho(\W)$ is very close to \emph{one}, $R$ needs to be very large in order to push $\rho(\W^R)$ to \emph{zero}. For this case, more efficient methods have been proposed in the literature for distributed averaging~\cite{liu11,ye20dapg,auzinger11}. We employ the Chebyshev accelerated method considered in~\cite{auzinger11}. The pseudocode is shown in Alg.~\ref{algo:cheby}. While the Chebyshev acceleration is called at iteration $k$ of ADAPD-MC or ADAPD-OG-MC, the input $\vec{A}^0$ will be $\x_0^{k+1}$.

\begin{algorithm}
	\DontPrintSemicolon
{\small	
	\KwIn{\small$\W,\vec{A}^{0}, \vec{A}^{1}=\W\vec{A}^0$, $R.$}
	Compute the step-sizes $\mu_{0}=1,\mu_{1}=\frac{1}{\rho}$\;
	\SetInd{0.25em}{0.25em}
	\For{$r=1,\dots,R$}{
		$\small\mu_{r+1}\gets\frac{2}{\rho}\mu_{r}-\mu_{r-1}$\;
		$\small\vec{A}^{r+1}\gets\textfrac{2\mu_r}{\rho\mu_{r+1}}\W\vec{A}^r-\frac{\mu_{r-1}}{\mu_{r+1}}\vec{A}^{r-1}$\;
	}
	\KwOut{$\small\vec{A}^R$}
}	
	\caption{\textbf{Chebyshev acceleration}}
	\label{algo:cheby}
\end{algorithm}

The following lemma shows that the properties in Assumption~\ref{assumption:mixing_matrix}(ii)-(iv) still hold for the operator used in the Chebyshev acceleration and provides an explicit convergence rate for Alg.~\ref{algo:cheby}. A proof is included in Appendix~\ref{appendix:cheby}; see the proof of Theorem 4 in~\cite{scaman17} and Corollary 6.1 in~\cite{auzinger11} for further details.

\begin{theoremEnd}[category=cheby]{lemma}\label{lemma:chebyshev}
The output of Alg.~\ref{algo:cheby} can be represented as $\vec {A}^R = \mathcal {P}\left (\W ,R\right )\vec {A}^0$, where $\mathcal {P}\left (\W ,R\right )$ is a degree-$R$ polynomial of $\W$ and satisfies Assumptions~\ref{assumption:mixing_matrix}(ii)-(iv). Additionally, we have that $\bar{\vec{A}}^R=\bar{\vec{A}}^0\triangleq\bar{\vec{A}}$ for any $R$ and
	\begin{equation}\label{lemma:chebyshev:bound}
	\norm{\vec{A}^R-\bar{\vec{A}}}_F\le2\left(1-\sqrt{1-\rho}\right)^R\norm{\vec{A}^0-\bar{\vec{A}}}_F.
	\end{equation}
\end{theoremEnd}
\begin{proofEnd}\enskip
Notice that the iterations of Alg.~\ref{algo:cheby} define a polynomial in $\W$; denote this as $\mathcal{P}\left(\W,r\right)$ for any $r\ge1$. Let $\vec{A}^R=\mathcal{P}\left(\W,R\right)\vec{A}^0$ be the output of Alg.~\ref{algo:cheby} after $R\ge 1$ iterations. Proceeding by induction, we first show that $\mathcal{P}\left(\W,R\right)$ satisfies Assumption~\ref{assumption:mixing_matrix} (ii) and (iii). Then we establish~\eqref{lemma:chebyshev:bound} which gives part (iv) of Assumption~\ref{assumption:mixing_matrix}. For the case where $R=1,$ $\mathcal{P}\left(\W,1\right)=\W$ and hence Assumption~\ref{assumption:mixing_matrix} is satisfied. For the inductive case, it is obvious that Assumption~\ref{assumption:mixing_matrix} (ii) holds. For (iii), we compute the following
  	\begin{align*}
  		\mathcal{P}\left(\W,R\right)\basis&=\frac{2\mu_{R-1}}{\rho\mu_R}\W\mathcal{P}\left(\W,R-1\right)\basis-\frac{\mu_{R-2}}{\mu_{R}}\mathcal{P}\left(\W,R-2\right)\basis\\
  		&=\frac{2\mu_{R-1}}{\rho\mu_R}\basis-\frac{\mu_{R-2}}{\mu_{R}}\basis\\
  		&=\basis
  	\end{align*}
  where the second equality holds because of the inductive hypothesis and the last equality uses line 3 in Alg.~\ref{algo:cheby}.\\
  To show part (iv) of Assumption~\ref{assumption:mixing_matrix}, we prove~\eqref{lemma:chebyshev:bound}. Namely, we have
  	\begin{align*}
  	\norm{\vec{A}^R-\bar{\vec{A}}}_F&=\norm{\mathcal{P}\left(\W,R\right)\vec{A}^0-\bar{\vec{A}}}_F\\
  	&=\norm{\left(\mathcal{P}\left(\W,R\right)-\frac{1}{N}\basis\basis^\top\right)(\vec{A}^0-\bar{\vec{A}})}_F\\
  	&\le\norm{\mathcal{P}\left(\W,R\right)-\frac{1}{N}\basis\basis^\top}_2\norm{\vec{A}^0-\bar{\vec{A}}}_F\\
  	&=\norm{\sum_{r=0}^R\gamma_r\left(\W^r-\frac{1}{N}\basis\basis^\top\right)}_2\norm{\vec{A}^0-\bar{\vec{A}}}_F
  	\end{align*}
  where the last equality comes from defining $\mathcal{P}\left(\W,R\right)\triangleq\sum_{r=0}^R\gamma_r\W^r$ and noting that by our previous argument, $\sum_{r=0}^R\gamma_r=1.$ By Corollary 6.1 in~\cite{auzinger11}, we have that
  	\begin{align*}
  		\norm{\sum_{r=0}^R\gamma_r\left(\W^r-\frac{1}{N}\basis\basis^\top\right)}_2&=2\frac{c^R}{1+c^{2R}}\le2c^R
  	\end{align*}
  where $c\triangleq\frac{\sqrt{\kappa}-1}{\sqrt{\kappa}+1}$ with $\kappa\triangleq\frac{1+\rho}{1-\rho}.$ Finally utilizing that $c^R\le\left(1-\sqrt{1-\rho}\right)^R<1$ for any $\rho\in[0,1)$ proves~\eqref{lemma:chebyshev:bound} and by the symmetry of $\mathcal{P}\left(\W,r\right)$, part (iv) of Assumption~\ref{assumption:mixing_matrix} holds.
\end{proofEnd}

We note that employing Alg.~\ref{algo:cheby} is only feasible if either: (i) the communication pattern is so sparse that consensus error is the main bottleneck for convergence, or (ii) communication cost is low relative to the computation cost, meaning that agents can communicate faster than they can compute. In practice, it is suggested that agents find a balance that distributes work evenly between communication and computation.

\section{Theoretical Guarantees}\label{sec:theory}

Our theoretical analysis draws from decentralized analytical methods such as~\cite{zeng18,hong17} and classical non-convex ADMM analyses, as in~\cite{wang19}. We first show the change in the augmented Lagrangian function value after one ADAPD iteration, i.e. (\ref{x_update})-(\ref{z_update}). Then we define a Lyapunov function and use it to show convergence. A crucial quantity for our analysis is
	\begin{equation}\label{v_vector}
		\small\vec{V}_0^k\defined\left(\x_0^{k+1}-\x_0^k\right)-\left(\x_0^k-\x_0^{k-1}\right).
	\end{equation}
We define $\x_0^{-1}\triangleq\x_0^0,$ to ensure that $\vec{V}_0^k$ is defined for all $k\ge0.$ In the convergence analysis of Alg.~\ref{algo:}, we will make consistent use of the following two facts.
\begin{fact}[Peter-Paul and Young's Inequality] For any $\vec{A},\vec{B}\in\R^{N\times p}$, for any $\delta>0$ and $i=1,\dots,m,$ we have,
	\begin{equation}\label{fact:peterpaulinequality}
	\textstyle	\small\ip{\vec{A}, \vec{B}}\le \textfrac{\delta}{2}\fronorm{\vec{A}}+\textfrac{1}{2\delta}\fronorm{\vec{B}},
	\end{equation}
	\begin{equation}\label{fact:youngsinequality}
	\textstyle	\small\norm{\sum_{i=1}^m\vec{A}_i}_F^2\le m\sum_{i=1}^m\norm{\vec{A}_i}_F^2.
	\end{equation}
\end{fact}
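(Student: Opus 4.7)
The plan is to treat the two inequalities separately; both are classical and reduce to completing the square or invoking convexity of $\|\cdot\|_F^2$, so no heavy machinery is required. The role of Fact~1 in the sequel is purely algebraic (it will be applied to bound cross terms and sums of matrix differences that arise from the ADAPD updates), so a short self-contained derivation suffices.

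For the first inequality, I would start from the trivial observation that $\fronorm{\sqrt{\delta}\,\vec{A} - \tfrac{1}{\sqrt{\delta}}\vec{B}} \ge 0$ for any $\delta>0$, which is well-defined since $\delta>0$. Expanding this Frobenius norm using the bilinearity of the matrix inner product gives $\delta\fronorm{\vec{A}} - 2\ip{\vec{A},\vec{B}} + \tfrac{1}{\delta}\fronorm{\vec{B}} \ge 0$, and rearranging yields $\ip{\vec{A},\vec{B}} \le \tfrac{\delta}{2}\fronorm{\vec{A}} + \tfrac{1}{2\delta}\fronorm{\vec{B}}$, which is exactly \eqref{fact:peterpaulinequality}. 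The only subtle point is that the inner product $\ip{\cdot,\cdot}$ defined in the paper is the standard Frobenius inner product, so the identity $\fronorm{\vec{X}} = \ip{\vec{X},\vec{X}}$ holds; once this is noted, the argument is a one-line completion of the square.

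For the second inequality, I would use convexity of the squared Frobenius norm. Writing $\sum_{i=1}^m \vec{A}_i = m \cdot \tfrac{1}{m}\sum_{i=1}^m \vec{A}_i$ and applying Jensen's inequality to the convex function $\varphi(\vec{X}) = \fronorm{\vec{X}}$ gives
\begin{equation*}
\Bigl\|\tfrac{1}{m}\sum_{i=1}^m \vec{A}_i\Bigr\|_F^2 \le \tfrac{1}{m}\sum_{i=1}^m \fronorm{\vec{A}_i},
\end{equation*}
so multiplying both sides by $m^2$ produces $\bigl\|\sum_{i=1}^m \vec{A}_i\bigr\|_F^2 \le m\sum_{i=1}^m \fronorm{\vec{A}_i}$, as claimed in \eqref{fact:youngsinequality}. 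An alternative route, which I would mention as a sanity check, is to expand $\bigl\|\sum_i \vec{A}_i\bigr\|_F^2 = \sum_{i,j}\ip{\vec{A}_i,\vec{A}_j}$ and bound each cross term by $\tfrac{1}{2}(\fronorm{\vec{A}_i}+\fronorm{\vec{A}_j})$ via the first inequality with $\delta=1$; summing over the $m^2$ pairs gives the same bound.

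There is no real obstacle here: both steps are one-line calculations, and the main thing to be careful about is simply to verify that the Frobenius inner product satisfies the usual Cauchy–Schwarz/completion-of-squares identities, which it does because it is the standard Euclidean inner product on $\R^{N\times p}$ under vectorization. Since equality in the Jensen step holds iff all $\vec{A}_i$ are equal, and equality in the Peter–Paul step holds iff $\delta\vec{A} = \vec{B}$, the stated inequalities are tight in the natural sense, which matches the way they are invoked later in the Lyapunov analysis.
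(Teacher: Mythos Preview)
Your proof is correct and entirely standard. The paper does not actually prove this statement: it is stated as a ``Fact'' and used without justification, so your self-contained derivation via completing the square for \eqref{fact:peterpaulinequality} and Jensen's inequality (or the equivalent cross-term expansion) for \eqref{fact:youngsinequality} is more than the paper itself provides.
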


\subsection{Convergence Results of ADAPD}

The first step in the analysis creates an equivalence expression among the dual and primal variables. Proofs are located in Appendix~\ref{appendix:algo}.
\begin{theoremEnd}[category=dualbound]{lemma}\label{lemma:dual_variable_relation}
For all $k\ge0$, the dual variables in \eqref{y_update} and \eqref{z_update} can be expressed as
			{\small
			\begin{align}
				\sqrt{\identity-\W}\z^k&=\y^k-\textfrac{1}{\eta}\W(\x_0^k-\x_0^{k-1})\label{lemma:dual_variable_relation:z_bound}\\
				\y^{k}&=\vec{R}^k-\nabla F(\x^k)-\textfrac{1}{\eta}(\x_0^k-\x_0^{k-1})\label{lemma:dual_variable_relation:y_bound}
			\end{align}
			}where $\vec{R}^k\triangleq\mymatrix{\vec{r}^k_1&\dots&\vec{r}^k_N}^\top$ for $\vec{r}_i^k$ defined in~\eqref{x_optimality} for all $i=1,\dots,N.$
\end{theoremEnd}

\begin{proofEnd}\enskip
	Recall $\tilde{\z}^k\triangleq\sqrt{\identity-\W}\z^k$ for all $k\ge0.$ Thus by (\ref{x0_update}), we have
		{\small
		\begin{align*}
			&\x_0^{k}-\x_0^{k-1}\\
			&=-\textfrac{\eta}{2}\left(-\y^{k-1}-\textfrac{1}{\eta}(\x^{k}-\x_0^{k-1})+\tilde{\z}^{k-1}+\textfrac{1}{\eta}(\identity-\W)\x_0^{k-1}\right)\\
			&\stack{(\ref{y_update}),(\ref{z_update})}{=}\enskip-\textfrac{\eta}{2}\left(-\y^k-\textfrac{1}{\eta}(\x_0^k-\x_0^{k-1})+\tilde{\z}^k-\textfrac{1}{\eta}(\identity-\W)[\x_0^k-\x_0^{k-1}]\right)\\
			&=\textfrac{\eta}{2}\y^k+\textfrac{1}{2}(\x_0^k-\x_0^{k-1})-\textfrac{\eta}{2}\tilde{\z}^k+\textfrac{1}{2}(\identity-\W)[\x_0^k-\x_0^{k-1}].
		\end{align*}
		}Combining like terms, rearranging, and multiplying both sides by $\textfrac{2}{\eta}$ gives (\ref{lemma:dual_variable_relation:z_bound}).\\
	To prove (\ref{lemma:dual_variable_relation:y_bound}),  we have from (\ref{y_update}) that $\y^{k-1}=\y^k-\textfrac{1}{\eta}\left(\x^k-\x_0^k\right)$; plugging this into (\ref{x_optimality}) with $k\gets k-1$ yields the desired result.
\end{proofEnd}
Next, we characterize the change of the augmented Lagrangian function value after one ADAPD iteration.

\begin{theoremEnd}[category=buildLyapunov]{lemma}\label{lemma:al_decrease} 
	Let $\{(\x^{k},\x_0^{k};\y^{k},\z^{k})\}$ be obtained from Alg.~\ref{algo:} or equivalently by updates \eqref{x_update}-\eqref{z_update} such that \eqref{x_optimality} holds.	If $\eta<\textfrac{1}{2L}$, then it holds for all $k\ge0$ that
	{\small
		\begin{equation}\label{lemma:al_decrease:bound}
			\begin{split}
				&\L_\eta(\x^{k+1},\x_0^{k+1};\y^{k+1},\z^{k+1})-\L_\eta(\x^{k},\x_0^{k};\y^{k},\z^{k})\\&\le\textfrac{2L\eta-1}{2\eta}\fronorm{\x^{k+1}-\x^k}+\textfrac{\epsilon_{k+1}}{2L}-\textfrac{1}{2\eta}\fronorm{\x_0^{k+1}-\x_0^k}\\
				&\quad \textstyle +\eta\fronorm{\y^{k+1}-\y^k}+\eta\fronorm{\z^{k+1}-\z^k}.
		\end{split}
		\end{equation}
	}
\end{theoremEnd}

\begin{proofEnd}\enskip 
	The inequality follows from rewriting $\L_\eta(\x^{k+1},\x_0^{k+1};\y^{k+1},\z^{k+1})-\L_\eta(\x^{k},\x_0^{k};\y^{k},\z^{k})$ as the summation of the left-hand sides of (\ref{lemma:x_decrease:bound}), (\ref{lemma:x0_decrease:bound}), (\ref{lemma:dual_decrease:y_bound}), and (\ref{lemma:dual_decrease:z_bound}) and using those four inequalities.
\end{proofEnd}

Notice that the inequality in \eqref{lemma:al_decrease:bound} does not imply the non-increasing monotonicity of the augmented Lagrangian function at the generated iterates. Below, we bound the dual variable change by the primal variable change and the $\vec{V}_0^k$ term. Then we establish another inequality and add it to \eqref{lemma:al_decrease:bound} to build a non-increasing Lyapunov function.

\begin{theoremEnd}[category=buildLyapunov]{lemma}\label{lemma:dual_var_bound}
	Under the assumptions of Lemma~\ref{lemma:al_decrease}, it holds that for all $k\ge0,$
	{\small
	\begin{align}
		\textstyle &\eta\fronorm{\y^{k+1}-\y^k}\le4L^2\eta\fronorm{\x^{k+1}-\x^k}+\textfrac{4}{\eta}\fronorm{\vec{V}_0^k}+8\eta\epsilon_k,\label{lemma:dual_var_bound:y_bound}\\
	\textstyle	 &\eta(1-\rho)\fronorm{\z^{k+1}-\z^k}\le8L^2\eta\fronorm{\x^{k+1}-\x^k}+\textfrac{10}{\eta}\fronorm{\vec{V}_0^k}+16\eta\epsilon_k,\label{lemma:dual_var_bound:z_bound}
	\end{align}
	}where $\vec{V}_0^k$ is defined in (\ref{v_vector}).

\end{theoremEnd}

\begin{proofEnd}\enskip
	To prove (\ref{lemma:dual_var_bound:y_bound}), by (\ref{lemma:dual_variable_relation:y_bound}), we have
	{\small	\begin{align*}
			&\eta\fronorm{\y^{k+1}-\y^k}\\
			&=\eta\fronorm{\vec{R}^{k+1}-\vec{R}^k-\nabla F(\x^{k+1})+\nabla F(\x^k)-\textfrac{1}{\eta}\vec{V}_0^k}\\
			&\stack{(\ref{fact:youngsinequality})}{\le} \textstyle 4\eta\left(\fronorm{\vec{R}^{k+1}}+\fronorm{\vec{R}^k}+\fronorm{\nabla F(\x^{k+1})-\nabla F(\x^k)}+\textfrac{1}{\eta^2}\fronorm{\vec{V}_0^k}\right)\\
			&\stack{(\ref{x_optimality}),(\ref{assumption:smoothness})}{\le}\quad4L^2\eta\fronorm{\x^{k+1}-\x^k}+\textfrac{4}{\eta}\fronorm{\vec{V}_0^k}+8\eta\epsilon_k
		\end{align*}
}where in the last inequality we have further used $\epsilon_{k+1}\le\epsilon_{k}$ for all $k\ge0$.\\
	To prove (\ref{lemma:dual_var_bound:z_bound}), notice that if $\z^0\in\mathrm{range}\big(\sqrt{\identity-\W}\big),$ then by (\ref{z_update_classic}), $\z^k\in\mathrm{range}\big(\sqrt{\identity-\W}\big)$ for all $k\ge0.$ Thus with $\rho_2\triangleq1-\lambda_2(\w)$, we have
		\begin{equation}\label{eq:bd-range-Z}
			\small\eta\rho_2\fronorm{\z^{k+1}-\z^k}\le\eta\fronorm{\sqrt{\identity-\W}\big(\z^{k+1}-\z^k\big)},
		\end{equation}
	and since $1-\rho\le\rho_2$, it further holds that,
		\begin{equation}\label{eq:bd-range-Z-1}
			\small\eta(1-\rho)\fronorm{\z^{k+1}-\z^k}\le\eta\rho_2\fronorm{\z^{k+1}-\z^k}
		\end{equation}
	In addition,
	    {\small
		\begin{align}
				&\eta\fronorm{\sqrt{\identity-\W}\big(\z^{k+1}-\z^k\big)}\\
				&\stack{(\ref{lemma:dual_variable_relation:z_bound})}{=}\eta\fronorm{\y^{k+1}-\y^k-\textfrac{1}{\eta}\W\vec{V}_0^k}\cr
				&\stack{(\ref{fact:peterpaulinequality})}{\le}2\eta\fronorm{\y^{k+1}-\y^k}+\textfrac{2}{\eta}\fronorm{\W\vec{V}_0^k}\label{eq:bd-range-Z-mid}\\
				&\stack{(\ref{lemma:dual_var_bound:y_bound})}{\le}8L^2\eta\fronorm{\x^{k+1}-\x^k}+\textfrac{8}{\eta}\fronorm{\vec{V}_0^k}+16\eta\epsilon_k+\textfrac{2}{\eta}\fronorm{\W\vec{V}_0^k}\cr
				&\le8L^2\eta\fronorm{\x^{k+1}-\x^k}+\textfrac{10}{\eta}\fronorm{\vec{V}_0^k}+16\eta\epsilon_k\label{eq:bd-range-Z-2}
			\end{align}
		}where the last inequality uses Assumption~\ref{assumption:mixing_matrix}(iv). Using~\eqref{eq:bd-range-Z} with~\eqref{eq:bd-range-Z-1}, we complete the proof.
\end{proofEnd}

\begin{theoremEnd}[category=buildLyapunov]{lemma}\label{lemma:supporing_inequality}
	For all $k\ge0,$ the following relation holds
	{\small
	\begin{equation}\label{lemma:supporting_inequality:bound}
		\begin{split}
		&\textstyle \textfrac{1}{2\eta} \textstyle \left(\fronorm{\sqrt{\identity-\W}\x_0^{k+1}}+\fronorm{\sqrt{\identity-\W}(\x_0^{k+1}-\x_0^k)}\right)\\
		&\quad \textstyle -\textfrac{1}{2\eta}\fronorm{\sqrt{\identity-\W}\x_0^{k}}\\
		&\quad \textstyle +\textfrac{1}{2\eta}\left(\norm{\vec{V}_0^k}_\W^2+\norm{\x_0^{k+1}-\x_0^k}_\W^2-\norm{\x_0^k-\x_0^{k-1}}_\W^2\right)\\
		&\textstyle \le(L-\textfrac{1}{2\eta})\fronorm{\x_0^{k+1}-\x_0^k}+\textfrac{L}{2}\fronorm{\x^{k+1}-\x^k}\\
		&\quad \textstyle +\textfrac{1}{2\eta}\fronorm{\x_0^k-\x_0^{k-1}}-\textfrac{1}{2\eta}\fronorm{\vec{V}_0^k}+\textfrac{2}{L}\epsilon_k
		\end{split}
	\end{equation}
	}where $\vec{V}_0^k$ is defined in (\ref{v_vector}).

\end{theoremEnd}

\begin{proofEnd}\enskip
	By (\ref{lemma:dual_variable_relation:y_bound}), we have
	{\small
	\begin{equation}\label{lemma:supporting_inequality:eqn1}
	\begin{split}
		\small&\ip{\y^{k+1}-\y^k,\x_0^{k+1}-\x_0^{k}}\\
		&=\ip{\vec{R}^{k+1}-\vec{R}^k-\nabla F(\x^{k+1})+\nabla F(\x^k)-\textfrac{1}{\eta}\vec{V}_0^k,\x_0^{k+1}-\x_0^{k}}.
	\end{split}
	\end{equation}
}	We handle the two sides of \eqref{lemma:supporting_inequality:eqn1} separately. First, we have
{\small
		\begin{align}\label{lemma:supporting_inequality:lhs_bound}
			\small&\ip{\y^{k+1}-\y^k,\x_0^{k+1}-\x_0^{k}}\\
			&\stack{(\ref{lemma:dual_variable_relation:z_bound})}{=}\ip{\sqrt{\identity-\W}\z^{k+1}-\sqrt{\identity-\W}\z^k+\textfrac{1}{\eta}\W\vec{V}_0^k,\x_0^{k+1}-\x_0^{k}}\cr
			&\stack{(\ref{z_update})}{=}\ip{\textfrac{1}{\eta}(\identity-\W)\x_0^{k+1}+\textfrac{1}{\eta}\W\vec{V}_0^k,\x_0^{k+1}-\x_0^{k}}\cr
			&=\textfrac{1}{2\eta}\left(\fronorm{\sqrt{\identity-\W}\x_0^{k+1}}+\fronorm{\sqrt{\identity-\W}(\x_0^{k+1}-\x_0^k)}\right)\\
			&\quad-\textfrac{1}{2\eta}\fronorm{\sqrt{\identity-\W}\x_0^{k}}\\
				&\quad+\textfrac{1}{2\eta}\left(\norm{\vec{V}_0^k}_\W^2+\norm{\x_0^{k+1}-\x_0^k}_\W^2-\norm{\x_0^k-\x_0^{k-1}}_\W^2\right)\nonumber
		\end{align}
}where the last equality can be verified straightforwardly. Second, we have
	{\small
		\begin{align*}
			&\ip{\vec{R}^{k+1}-\vec{R}^k-\nabla F(\x^{k+1})+\nabla F(\x^k)-\textfrac{1}{\eta}\vec{V}_0^k,\x_0^{k+1}-\x_0^{k}}\\
			&\stack{(\ref{fact:peterpaulinequality}),(\ref{assumption:smoothness})}{\le}\quad\textfrac{1}{2L}\fronorm{\vec{R}^{k+1}-\vec{R}^k}+\textfrac{L}{2}\fronorm{\x_0^{k+1}-\x_0^k}\\
				&\quad+\textfrac{L}{2}\left(\fronorm{\x^{k+1}-\x^k}+\fronorm{\x_0^{k+1}-\x_0^k}\right)\\
				&\quad-\textfrac{1}{\eta}\ip{\vec{V}_0^k,\x_0^{k+1}-\x_0^{k}}\\
			&\stack{(\ref{fact:youngsinequality})}{\le}\textfrac{2}{L}\epsilon_k+\textfrac{L}{2}\fronorm{\x^{k+1}-\x^k}+L\fronorm{\x_0^{k+1}-\x_0^k}\\
			&-\textfrac{1}{2\eta}\left(\fronorm{\vec{V}_0^k}+\fronorm{\x_0^{k+1}-\x_0^k}-\fronorm{\x_0^k-\x_0^{k-1}}\right)
		\end{align*}
	}where the last inequality comes from $\epsilon_{k+1}\le\epsilon_{k}$ for all $k\ge1.$ Combining like terms results in the right hand side of (\ref{lemma:supporting_inequality:bound}); further using the equality established in (\ref{lemma:supporting_inequality:eqn1}) completes the proof.
\end{proofEnd}

Using Lemmas \ref{lemma:dual_var_bound} and \ref{lemma:supporing_inequality}, we are ready to build a non-increasing Lyapunov function as follows.

\begin{theoremEnd}[category=buildLyapunov]{lemma}\label{lemma:al_decrease2}
	Let $\{(\x^{k},\x_0^{k};\y^{k},\z^{k})\}$ be obtained from Alg.~\ref{algo:} or equivalently by updates \eqref{x_update}-\eqref{z_update} such that \eqref{x_optimality} holds.	If $\eta<\textfrac{1}{2L}$, then
	{\small
	\begin{equation}\label{lemma:al_decrease2:bound}
		\begin{split}
			&\L_\eta(\x^{k+1},\x_0^{k+1};\y^{k+1},\z^{k+1})+\textfrac{C}{2\eta}\fronorm{\sqrt{\identity-\W}\x_0^{k+1}}\\
			&\quad+\textfrac{C}{2\eta}\fronorm{\x_0^{k+1}-\x_0^k}\\ 
			&\le\L_\eta(\x^{k},\x_0^{k};\y^{k},\z^{k})+\textfrac{C}{2\eta}\fronorm{\sqrt{\identity-\W}\x_0^{k}}+\textfrac{C}{\eta}\fronorm{\x_0^k-\x_0^{k-1}}\\
			&\quad+\left(\textfrac{(8L^2(1-\rho)+16L^2)\eta^2+(C+2)L(1-\rho)\eta-(1-\rho)}{2(1-\rho)\eta}\right)\fronorm{\x^{k+1}-\x^k}\\
			&\quad+\left(\textfrac{2CL\eta-C-1}{2\eta}\right)\fronorm{\x_0^{k+1}-\x_0^k}\\
			&\quad+\textfrac{(1-\rho)+(32L+16L(1-\rho))\eta+4C(1-\rho)}{2L(1-\rho)}\epsilon_k.
		\end{split}
	\end{equation}
	}for all $k\ge0,$ where $C\ge\textfrac{20+8(1-\rho)}{(1-\rho)^2}$ is a fixed constant.
\end{theoremEnd}

\begin{proofEnd}\enskip 
	By Lemmas \ref{lemma:al_decrease} and \ref{lemma:dual_var_bound} and also using $\epsilon_{k+1}\le\epsilon_k$, we have
		{\small
			\begin{align*}
				&\L_\eta(\x^{k+1},\x_0^{k+1};\y^{k+1},\z^{k+1})-\L_\eta(\x^{k},\x_0^{k};\y^{k},\z^{k})\\
				&\le\textfrac{(8L^2(1-\rho)+16L^2)\eta^2+2L(1-\rho)\eta-(1-\rho)}{2(1-\rho)\eta}\fronorm{\x^{k+1}-\x^k}\\
				&\quad-\textfrac{1}{2\eta}\fronorm{\x_0^{k+1}-\x_0^k}+\textfrac{10+4(1-\rho)}{(1-\rho)\eta}\fronorm{\vec{V}_0^k}+\textfrac{32L\eta+16L(1-\rho)\eta+(1-\rho)}{2L(1-\rho)}\epsilon_k.
			\end{align*}
		}Now multiplying $C>0$ to both sides of (\ref{lemma:supporting_inequality:bound}) and adding to the above inequality, we have
		{\small
			\begin{equation}\label{lemma:al_decrease2:eqn1}
			\begin{split}
				&\L_\eta(\x^{k+1},\x_0^{k+1};\y^{k+1},\z^{k+1})+\textfrac{C}{2\eta}\norm{\x_0^{k+1}-\x_0^k}_\W^2+\textfrac{C}{2\eta}\norm{\vec{V}_0^k}_\W^2\\
				&\quad+\textfrac{C}{2\eta}\fronorm{\sqrt{\identity-\W}\x_0^{k+1}}+\textfrac{C}{2\eta}\fronorm{\sqrt{\identity-\W}(\x_0^{k+1}-\x_0^k)}\\
				&\le\L_\eta(\x^{k},\x_0^{k};\y^{k},\z^{k})+\textfrac{C}{2\eta}\fronorm{\sqrt{\identity-\W}\x_0^{k}}\\
				&\quad+\textfrac{C}{2\eta}\norm{\x_0^{k}-\x_0^{k-1}}_\W^2+\textfrac{C}{2\eta}\fronorm{\x_0^k-\x_0^{k-1}}\\
					&\quad+\left(\textfrac{(8L^2(1-\rho)+16L^2)\eta^2+(C+2)L(1-\rho)\eta-(1-\rho)}{2(1-\rho)\eta}\right)\fronorm{\x^{k+1}-\x^k}\\
					&\quad+\textfrac{2CL\eta-C-1}{2\eta}\fronorm{\x_0^{k+1}-\x_0^k}+\textfrac{20+8(1-\rho)-C(1-\rho)}{2(1-\rho)\eta}\fronorm{\vec{V}_0^k}\\
					&\quad+\textfrac{(1-\rho)+(32L+16L(1-\rho))\eta+4C(1-\rho)}{2L(1-\rho)}\epsilon_k.
			\end{split}
			\end{equation}
		}Since the minimum eigenvalue of $\identity+\W$ is $\rho_N\triangleq1+\lambda_N(\w)>0$, it holds $\textfrac{20+8(1-\rho)}{(1-\rho)}\identity \preccurlyeq C\left(\identity+\W\right)$ when $C\ge\textfrac{20+8(1-\rho)}{(1-\rho)\rho_N}$. Hence, we have $0\le\textfrac{C(1-\rho)-20-8(1-\rho)}{2(1-\rho)\eta}\fronorm{\vec{V}_0^k}+\textfrac{C}{2\eta}\norm{\vec{V}_0^k}_\W^2$ by noticing
	{\small
			\begin{equation}\label{lemma:c_bound-eq}
			\begin{split}
				&\textfrac{C(1-\rho)-20-8(1-\rho)}{2(1-\rho)\eta}\fronorm{\vec{V}_0^k}+\textfrac{C}{2\eta}\norm{\vec{V}_0^k}_\W^2\\
				&=\norm{\vec{V}_0^k}_{\textfrac{C(1-\rho)-20-8(1-\rho)}{2(1-\rho)\eta}\identity+\textfrac{C}{2\eta}\W}^2 \ge0.
			\end{split}
			\end{equation}
	}Noticing that $\textfrac{1}{(1-\rho)}\ge\textfrac{1}{\rho_N}$, we have $C\ge\textfrac{20+8(1-\rho)}{(1-\rho)^2}$ also satisfies the above requirement. In addition, it holds
	{\small
		\begin{align*}
			&\textfrac{C}{2\eta}\fronorm{\sqrt{\identity-\W}(\x_0^{k+1}-\x_0^k)}+\textfrac{C}{2\eta}\norm{\x_0^{k+1}-\x_0^k}^2_{\W}\\
			&=\textfrac{C	}{2\eta}\norm{\x_0^{k+1}-\x_0^k}^2_{\identity-\W}+\textfrac{C}{2\eta}\norm{\x_0^{k+1}-\x_0^k}^2_{\W}=\textfrac{C}{2\eta}\fronorm{\x_0^{k+1}-\x_0^k}.
		\end{align*}
	}Furthermore, noting $\textfrac{C}{2\eta}\norm{\x_0^{k}-\x_0^{k-1}}^2_{\W}\le\textfrac{C}{2\eta}\fronorm{\x_0^{k}-\x_0^{k-1}}$, we obtain the desired result from \eqref{lemma:al_decrease2:eqn1}.
\end{proofEnd}

For the rest of the analysis, we fix $C\triangleq\textfrac{28}{(1-\rho)^2}$ as used in Lemma \ref{lemma:al_decrease2}, define the Lyapunov function:
	\begin{equation}\label{lyapunov}
		\small \Phi^k\defined\L_\eta(\x^{k},\x_0^{k};\y^{k},\z^{k})+\textfrac{C}{2\eta}\fronorm{\sqrt{\identity-\W}\x_0^{k}}+\textfrac{C}{\eta}\fronorm{\x_0^k-\x_0^{k-1}}.
	\end{equation}
We show the lower boundedness of this Lyapunov function in the following proposition and use this to obtain the convergence of Alg.~\ref{algo:}.

\begin{theoremEnd}[category=inexactproofs]{proposition}\label{proposition:lyapunov_lower_bound}
	Under Assumptions \ref{assumption:mixing_matrix} and \ref{assumption:objective_function}, let $\{(\x^{k},\x_0^{k};\y^{k},\z^{k})\}$ be obtained from Alg.~\ref{algo:} or equivalently by updates \eqref{x_update}-\eqref{z_update} such that \eqref{x_optimality} and \eqref{error_summation} hold. Choose $C$ and $\eta$ such that
	{\small
		\begin{equation}\label{eq:cond-C-eta}
		\begin{split}
			C=\textfrac{28}{(1-\rho)^2}\text{ and }\eta<\textfrac{1}{2CL}.
		\end{split}
		\end{equation}
	}Then the Lyapunov function (\ref{lyapunov}) is uniformly lower bounded. More specifically, for all $k\ge0,$
		\begin{equation}\label{proposition:lyapunov_lower_bound:bound}
			\small \Phi^k \ge \underline{\phi}:= \underline{f}-\textfrac{(1-\rho)+(32L+16L(1-\rho))\eta+4C(1-\rho)}{2L(1-\rho)}\sum_{k=0}^\infty\epsilon_{k}-1 > -\infty,
		\end{equation}
	where we take $\epsilon_0=\epsilon_1$ and $\underline{f}$ is defined in Assumption~\ref{assumption:objective_function}.	
\end{theoremEnd}

\begin{proofEnd}\enskip
	First, it is obvious that $\L_\eta(\x^{k},\x_0^{k};\y^{k},\z^{k})\le\Phi^{k}$ for all $k\ge0$ by the definition of $\Phi^k$ in \eqref{lyapunov}.
	Second, notice
	{\small
		\begin{align*}
			&\L_\eta(\x^{k+1},\x_0^{k+1};\y^{k+1},\z^{k+1})\\
			&=F(\x^{k+1})+\ip{\y^{k+1},\x^{k+1}-\x_0^{k+1}}+\textfrac{1}{2\eta}\fronorm{\x^{k+1}-\x_0^{k+1}}\\
			&\quad+\ip{\z^{k+1},\sqrt{\identity-\W}\x_0^{k+1}}+\textfrac{1}{2\eta}\fronorm{\sqrt{\identity-\W}\x_0^{k+1}}\\
			&\stack{(\ref{y_update}),(\ref{z_update})}{=}\quad F(\x^{k+1})+\ip{\y^{k+1},\eta(\y^{k+1}-\y^{k})}+\textfrac{1}{2\eta}\fronorm{\x^{k+1}-\x_0^{k+1}}\\
				&\quad+\ip{\z^{k+1},\eta(\z^{k+1}-\z^{k})}+\textfrac{1}{2\eta}\fronorm{\sqrt{\identity-\W}\x_0^{k+1}}\\
			&=F(\x^{k+1})+\textfrac{\eta}{2}\left(\fronorm{\y^{k+1}}+\fronorm{\y^{k+1}-\y^{k}}-\fronorm{\y^{k}}\right)\\
				&\quad+\textfrac{1}{2\eta}\fronorm{\x^{k+1}-\x_0^{k+1}}+\textfrac{1}{2\eta}\fronorm{\sqrt{\identity-\W}\x_0^{k+1}}\\
				&\quad+\textfrac{\eta}{2}\left(\fronorm{\z^{k+1}}+\fronorm{\z^{k+1}-\z^{k}}-\fronorm{\z^{k}}\right)
		\end{align*}
	}Thus, by the definition of $\underline{f}$ in (\ref{assumption:lower_bound}), we have that for any integer number $K\ge1$,
	{\small
		\begin{align}\label{eq:lower-bd-phik-sum}
			&\sum_{k=0}^{K-1}\left(\Phi^{k+1}-\underline{f}\right)\cr
			&\ge\sum_{k=0}^{K-1}\left(\L_\eta(\x^{k+1},\x_0^{k+1};\y^{k+1},\z^{k+1})-\underline{f}\right)\cr
			&=\sum_{k=0}^{K-1}\left(F(\x^{k+1})-\underline{f}+\textfrac{\eta}{2}\fronorm{\y^{k+1}-\y^{k}}+\textfrac{1}{2\eta}\fronorm{\x^{k+1}-\x_0^{k+1}}\right)\cr
				&+\sum_{k=0}^{K-1}\left(\textfrac{\eta}{2}\fronorm{\z^{k+1}-\z^{k}}+\textfrac{1}{2\eta}\fronorm{\sqrt{\identity-\W}\x_0^{k+1}}\right)\cr
				&\quad+\textfrac{\eta}{2}\fronorm{\y^{K}}-\textfrac{\eta}{2}\fronorm{\y^0}+\textfrac{\eta}{2}\fronorm{\z^{K}}-\textfrac{\eta}{2}\fronorm{\z^0}\cr
			&\ge-\textfrac{\eta}{2}\fronorm{\y^0}-\textfrac{\eta}{2}\fronorm{\z^0}\triangleq-M.
		\end{align}
	}Thirdly, by (\ref{lemma:al_decrease2:bound}) and the definition of $\Phi^k$ in \eqref{lyapunov}, 
	we have 
	{\small
		\begin{equation}\label{proposition:lyapunov_lower_bound:eqn1-0}
		\begin{split}
			&\Phi^{k+1}+\textfrac{(1-\rho)-(C+2)L(1-\rho)\eta-(8L^2(1-\rho)+16L^2)\eta^2}{2(1-\rho)\eta}\fronorm{\x^{k+1}-\x^k}\\
				&+\left(\textfrac{1}{2\eta}-CL\right)\fronorm{\x_0^{k+1}-\x_0^k}\le\Phi^k+\textfrac{(1-\rho)+(32L+16L(1-\rho))\eta+4C(1-\rho)}{2L(1-\rho)}\epsilon_k.
		\end{split}
		\end{equation}
	}Hence, it holds from the choice of $C$ and $\eta$ that
		\begin{equation}\label{proposition:lyapunov_lower_bound:eqn1}
			\small\Phi^{k+1}\le\Phi^k+\textfrac{(1-\rho)+(32L+16L(1-\rho))\eta+4C(1-\rho)}{2L(1-\rho)}\epsilon_k.
		\end{equation}
	Now assume that there exists $k_0\ge0$ such that $\Phi^{k_0}-\underline{f}<-\textfrac{(1-\rho)+(32L+16L(1-\rho))\eta+4C(1-\rho)}{2L(1-\rho)}\sum_{k=0}^\infty\epsilon_{k}-1$. Then summing up (\ref{proposition:lyapunov_lower_bound:eqn1}) gives $\Phi^k-\underline{f}\le\Phi^{k_0}-\underline{f}+\textfrac{(1-\rho)+(32L+16L(1-\rho))\eta+4C(1-\rho)}{2L(1-\rho)}\sum_{k=k_0}^\infty\epsilon_k <-1$ for all $k\ge k_0$. Hence, $\sum_{k=k_0+1}^\infty\left(\Phi^k-\underline{f}\right) = -\infty$, which contradicts \eqref{eq:lower-bd-phik-sum}. Therefore, we conclude that $\Phi^{k}-\underline{f}\ge-\textfrac{(1-\rho)+(32L+16L(1-\rho))\eta+4C(1-\rho)}{2L(1-\rho)}\sum_{k=0}^\infty\epsilon_{k}-1,$ for all $k\ge0$ and complete the proof.
\end{proofEnd}

We are now in position to prove the convergence rate results of ADAPD.

\begin{theoremEnd}[category=inexactproofs]{thm}\label{theorem:convergence1}
	Under the same conditions assumed in Proposition~\ref{proposition:lyapunov_lower_bound}, it holds that
	{\small
		\begin{equation}\label{theorem:convergence1:bound}
			\begin{split}
				&\textfrac{C_1}{K}\sum_{k=0}^{K-1}\left(\fronorm{\x^{k+1}-\x^k}+\fronorm{\x_0^{k+1}-\x_0^k}\right)\\
				&\le\textfrac{\Delta_{\Phi}}{K}+\textfrac{(32L+16L(1-\rho))\eta+(4C+1)(1-\rho)}{2L(1-\rho) K}\sum_{k=0}^{K-1}\epsilon_k,
			\end{split}
		\end{equation}
	}where $\Delta_{\Phi}\defined\Phi^0-\underline{\phi}$ and
		\begin{equation}\label{theorem:convergence1:constant}
		    \small C_1\defined\textfrac{1-2CL\eta}{2\eta}.
		\end{equation}
\end{theoremEnd}

\begin{proofEnd}\enskip
	Summing up \eqref{proposition:lyapunov_lower_bound:eqn1-0} from $k=0$ to $K-1$ and dividing by $K$ results in
	{\small
		\begin{align}
			&\left(\textfrac{(1-\rho)-(C+2)L(1-\rho)\eta-(8L^2(1-\rho)+16L^2)\eta^2}{2(1-\rho)\eta}\right)\textfrac{1}{K}\sum_{k=0}^{K-1}\fronorm{\x^{k+1}-\x^k}\nonumber\\
			&+\textfrac{1-2CL\eta}{2\eta}\textfrac{1}{K}\sum_{k=0}^{K-1}\fronorm{\x_0^{k+1}-\x_0^k}\nonumber\\
			&\le \textfrac{\Phi^0-\Phi^{K}}{K}+\textfrac{(1-\rho)+(32L+16L(1-\rho))\eta+4C(1-\rho)}{2L(1-\rho)}\cdot\textfrac{1}{K}\sum_{k=0}^{K-1}\epsilon_k\nonumber\\
			&\stack{(\ref{proposition:lyapunov_lower_bound:bound})}{\le}\textfrac{\Phi^0-\underline{\phi}}{K}+\textfrac{(1-\rho)+(32L+16L(1-\rho))\eta+4C(1-\rho)}{2L(1-\rho)}\cdot\textfrac{1}{K}\sum_{k=0}^{K-1}\epsilon_k.\label{eq:sum-diff-X-X0-bd}
		\end{align}
	}By the choice of $C$ and $\eta$, it holds that $\textfrac{1-2CL\eta}{2\eta}\le\textfrac{(1-\rho)-(C+2)L(1-\rho)\eta-(8L^2(1-\rho)+16L^2)\eta^2}{2(1-\rho)\eta}$ so $C_1$ as defined in (\ref{theorem:convergence1:constant}) is positive, and thus the inequality in~\eqref{eq:sum-diff-X-X0-bd} implies the desired result.
\end{proofEnd}

\begin{theoremEnd}[category=inexactproofs]{thm}[Convergence of ADAPD]\label{theorem:total_convergence} 
	Under the same conditions assumed in Proposition~\ref{proposition:lyapunov_lower_bound}, it holds
	{\small
		\begin{align}
			&\textfrac{1}{K}\sum_{k=0}^{K-1}\left(\norm{\nabla f(\bar{\vec{x}}^{k+1})}_2^2+\fronorm{\x^{k+1}-\bar{\x}^{k+1}}\right)\nonumber\\
			&{\le}\textfrac{\left((2L^2+1)C_2+C_4\right)\Delta_{\Phi}}{KC_1}+\textfrac{(192L^2+96)\eta^2}{KC_1(1-\rho)^2}\sum_{k=0}^{K-1}\epsilon_k\nonumber\\
			&\quad+\textfrac{\left((2L^2+1)C_2C_3+C_3C_4+4C_1\right)}{KC_1}\sum_{k=0}^{K-1}\epsilon_k\label{theorem:total_convergence:eqn}
		\end{align}
	}where $C_1$ is defined in (\ref{theorem:convergence1:constant}), 	$C_2\triangleq\textfrac{208}{(1-\rho)^2}$, $C_3\triangleq\textfrac{(32L+16L(1-\rho))\eta+(4C+1)(1-\rho)}{2L(1-\rho)}$, $C_4\triangleq\textfrac{16}{\eta^2}$, $\bar{\vec{x}}^{k}\triangleq\textfrac{1}{N}\sum\limits_{i=1}^N\vec{x}_i^k$, and $\bar{\x}^{k}\triangleq\textfrac{1}{N}\basis\basis^\top\x^{k}$.
\end{theoremEnd}

\begin{proofEnd}\enskip 
	First, we have for all $k\ge0$ that
	{\small
			\begin{align*}
			&\fronorm{\x^{k+1}-\bar{\x}^{k+1}}\\
			&=\fronorm{\x^{k+1}-\W\x^{k+1}+\W\x^{k+1}-\bar{\x}^{k+1}}\\
			&=\fronorm{\x^{k+1}-\W\x^{k+1}}+\fronorm{\left(\W-\textfrac{1}{N}\basis\basis^\top\right)(\x^{k+1}-\bar{\x}^{k+1})}\\
				&\quad+2\ip{\x^{k+1}-\W\x^{k+1},\left(\W-\textfrac{1}{N}\basis\basis^\top\right)(\x^{k+1}-\bar{\x}^{k+1})}\\
			&\stack{(\ref{fact:peterpaulinequality})}{\le}\fronorm{\x^{k+1}-\W\x^{k+1}}+\fronorm{\left(\W-\textfrac{1}{N}\basis\basis^\top\right)(\x^{k+1}-\bar{\x}^{k+1})}\\
				&\quad+\textfrac{1}{\delta}\fronorm{\x^{k+1}-\W\x^{k+1}}+\delta\fronorm{\left(\W-\textfrac{1}{N}\basis\basis^\top\right)(\x^{k+1}-\bar{\x}^{k+1})}\\
			&\le \rho^2\left(1+\delta\right)\fronorm{\x^{k+1}-\bar{\x}^{k+1}}+\left(1+\textfrac{1}{\delta}\right)\fronorm{\x^{k+1}-\W\x^{k+1}},
			\end{align*}
		}where $\delta>0$, and we have used $\rho=\norm{\W-\textfrac{1}{N}\basis\basis^\top}_2$ from (\ref{spectral_gap}). Choosing $\delta=\textfrac{1-\rho}{\rho}>0$ and simplifying the result,
		we obtain   
			\begin{equation}\label{theorem:consensus:eqn1}
				\small\fronorm{\x^{k+1}-\bar{\x}^{k+1}}\le\textfrac{1}{(1-\rho)^2}\fronorm{\x^{k+1}-\W\x^{k+1}}.
			\end{equation}
		Now, by (\ref{z_update}) and the proof of Lemma~\ref{lemma:dual_var_bound}, we have
		{\small
			\begin{equation}\label{theorem:convergence1:eqn2}
				\begin{split}
					&\textfrac{1}{\eta}\fronorm{\big(\identity-\W\big)\x_0^{k+1}}=\eta\fronorm{\sqrt{\identity-\W}(\z^{k+1}-\z^k)}\\
					&\le8L^2\eta\fronorm{\x^{k+1}-\x^k}+\textfrac{10}{\eta}\fronorm{\vec{V}_0^k}+16\eta\epsilon_k
				\end{split}
			\end{equation}
		}and by (\ref{y_update}),
		{\small
			\begin{equation}\label{theorem:convergence1:eqn3}
				\begin{split}
				&\fronorm{\x^{k+1}-\x_0^{k+1}}=\eta^2\fronorm{\y^{k+1}-\y^k}\\
				&\stack{(\ref{lemma:dual_var_bound:y_bound})}{\le}4L^2\eta^2\fronorm{\x^{k+1}-\x^k}+4\fronorm{\vec{V}_0^k}+8\eta^2\epsilon_k.
				\end{split}
			\end{equation}
		}Thus, 
			{\small
			\begin{align}\label{theorem:consensus:bound}
				\small&\textfrac{1}{K}\sum_{k=0}^{K-1}\fronorm{\x^{k+1}-\bar{\x}^{k+1}}\cr
				&\stack{(\ref{theorem:consensus:eqn1})}{\le}\enskip\textfrac{1}{(1-\rho)^2K}\sum_{k=0}^{K-1}\fronorm{\big(\identity-\W\big)\x^{k+1}}\cr
				&\le\textfrac{2}{(1-\rho)^2K}\sum_{k=0}^{K-1}\fronorm{\big(\identity-\W\big)(\x^{k+1}-\x_0^{k+1})}\cr
					&\quad+\textfrac{2}{(1-\rho)^2K}\sum_{k=0}^{K-1}\fronorm{\big(\identity-\W\big)\x_0^{k+1}}\cr
				&\le\textfrac{2}{(1-\rho)^2K}\left(4\sum_{k=0}^{K-1}\fronorm{\x^{k+1}-\x_0^{k+1}}+\sum_{k=0}^{K-1}\fronorm{\big(\identity-\W\big)\x_0^{k+1}}\right)\cr
				&\stack{(\ref{theorem:convergence1:eqn2}),(\ref{theorem:convergence1:eqn3})}{\le}\quad\textfrac{48L^2\eta^2}{(1-\rho)^2K}\sum_{k=0}^{K-1}\fronorm{\x^{k+1}-\x^k}+\textfrac{52}{(1-\rho)^2K}\sum_{k=0}^{K-1}\fronorm{\vec{V}_0^k}\cr
					&\quad+\textfrac{96\eta^2}{(1-\rho)^2K}\sum_{k=0}^{K-1}\epsilon_k\cr
				&\stack{(\ref{fact:youngsinequality})}{\le}\textfrac{48L^2\eta^2}{(1-\rho)^2K}\sum_{k=0}^{K-1}\fronorm{\x^{k+1}-\x^k}\cr
						&\quad+\textfrac{208}{(1-\rho)^2K}\sum_{k=0}^{K-1}\fronorm{\x_0^{k+1}-\x_0^k}+\textfrac{96\eta^2}{(1-\rho)^2K}\sum_{k=0}^{K-1}\epsilon_k\cr
				&\stack{(\ref{theorem:convergence1:bound})}{\le}\textfrac{1}{C_1K}\left(C_2\Delta_{\Phi}+\textfrac{96\eta^2+C_2C_3(1-\rho)^2}{(1-\rho)^2}\sum_{k=0}^{K-1}\epsilon_k\right),
			\end{align}
		}where we have used the fact that $\norm{\identity-\W}_2\le2$ and the choice of $\eta$ in~\eqref{prop:ss:constants_bound} to have $\max\left\{\textfrac{48L^2\eta^2}{(1-\rho)^2},\textfrac{208}{(1-\rho)^2}\right\}=\textfrac{208}{(1-\rho)^2}\triangleq C_2$ and defined $C_3\triangleq\textfrac{(32L+16L(1-\rho))\eta+(4C+1)(1-\rho)}{2L(1-\rho)}$.
		Furthermore, we use (\ref{lemma:dual_variable_relation:z_bound}) and (\ref{lemma:dual_variable_relation:y_bound}) to have
				\begin{equation}\label{theorem:kkt:eqn1}
					\small\nabla F(\x^{k+1})+\sqrt{\identity-\W}\z^{k+1}=\vec{R}^{k+1}-\textfrac{1}{\eta}(\identity+\W)[\x_0^{k+1}-\x_0^k].
				\end{equation}
		Now, by Assumption~\ref{assumption:mixing_matrix}(iii), we have $\basis^\top\sqrt{\identity-\W}=\0.$ Hence
		{\small
				\begin{align}\label{theorem:kkt:bound}
					&\textfrac{1}{K}\sum_{k=0}^{K-1}\fronorm{\nabla f(\bar{\vec{x}}^{k+1})}\cr
					&=\textfrac{1}{K}\sum_{k=0}^{K-1}\fronorm{\textfrac{1}{N}\basis\basis^\top\left(\nabla F(\bar{\x}^{k+1})+\sqrt{\identity-\W}\z^{k+1}\right)}\cr
					&\le\norm{\textfrac{1}{N}\basis\basis^\top}_2^2\textfrac{1}{K}\sum_{k=0}^{K-1}\fronorm{F(\bar{\x}^{k+1})+\sqrt{\identity-\W}\z^{k+1}}\cr
					&\le\textfrac{2}{K}\sum_{k=0}^{K-1}\fronorm{F(\x^{k+1})+\sqrt{\identity-\W}\z^{k+1}}\cr
						&\quad+\textfrac{2}{K}\sum_{k=0}^{K-1}\fronorm{\nabla F(\bar{\x}^{k+1})-\nabla F(\x^{k+1})}\cr
					&\stack{(\ref{theorem:kkt:eqn1}),(\ref{assumption:smoothness})}{\le}\quad\textfrac{2}{K}\sum_{k=0}^{K-1}\fronorm{\vec{R}^{k+1}-\textfrac{1}{\eta}(\identity+\W)[\x_0^{k+1}-\x_0^k]}\cr
						&\quad+\textfrac{2L^2}{K}\sum_{k=0}^{K-1}\fronorm{\x^{k+1}-\bar{\x}^{k+1}}\cr
					&\stack{(\ref{fact:youngsinequality}),(\ref{x_optimality})}{\le}\quad\textfrac{16}{K\eta^2}\sum_{k=0}^{K-1}\fronorm{\x_0^{k+1}-\x_0^k}+\textfrac{4}{K}\sum_{k=0}^{K-1}\epsilon_{k}\cr
						&\quad+\textfrac{2L^2}{K}\sum_{k=0}^{K-1}\fronorm{\x^{k+1}-\bar{\x}^{k+1}}\cr
					&\stack{(\ref{theorem:convergence1:bound}),(\ref{theorem:consensus:bound})}{\le}\quad\textfrac{(2C_2L^2+C_4)\Delta_{\Phi}}{KC_1}\cr
						&\quad+\textfrac{192L^2\eta^2+\left(2C_2C_3L^2+C_3C_4+4C_1\right)(1-\rho)^2}{KC_1(1-\rho)^2}\sum_{k=0}^{K-1}\epsilon_k
				\end{align}
		}where we have used $\norm{\identity+\W}_2\le2$ in the fourth inequality and defined $C_4\triangleq\textfrac{16}{\eta^2}.$
		Finally, we have that
		{\small
			\begin{align}\label{eq:final-bd-stat}
				&\min_{1\le k'\le K}\left(\norm{\nabla f(\bar{\vec{x}}^{k'})}_2^2+\fronorm{\x^{k'}-\bar{\x}^{k'}}\right)\nonumber\\
				&\le\textfrac{1}{K}\sum_{k=0}^{K-1}\left(\norm{\nabla f(\bar{\vec{x}}^{k+1})}_2^2+\fronorm{\x^{k+1}-\bar{\x}^{k+1}}\right)\cr
				&\stack{(\ref{theorem:consensus:bound}),(\ref{theorem:kkt:bound})}{\le}\quad\textfrac{\left((2L^2+1)C_2+C_4\right)\Delta_{\Phi}}{KC_1}+\textfrac{(192L^2+96)\eta^2}{KC_1(1-\rho)^2}\sum_{k=0}^{K-1}\epsilon_k\cr
					&+\textfrac{\left((2L^2+1)C_2C_3+C_3C_4+4C_1\right)}{KC_1}\sum_{k=0}^{K-1}\epsilon_k.
		\end{align}
		}We complete the proof.
\end{proofEnd}

\begin{remark}\label{rm:local_complexity}
Let $k_0=\underset{1\le k\le K}{\argmin}\left(\norm{\nabla f(\bar{\vec{x}}^{k})}_2^2+\fronorm{\x^{k'}-\bar{\x}^{k}}\right)$. Then $\norm{\nabla f(\bar{\vec{x}}^{k_0})}_2^2+\fronorm{\x^{k_0}-\bar{\x}^{k_0}}=\bigO{\textfrac{1}{K}}$. Hence, in order to produce an $\varepsilon$-stationary point as defined in Definition~\ref{def:stationarity}, we need $K=\bigO{\textfrac{1}{\varepsilon}}$ iterations. Furthermore, notice that all the problems in \eqref{x_subproblem} are smooth and strongly convex. The steepest gradient method has linear convergence to solve such problems. Hence, to produce $\vec{x}^{k+1}_i$ as a $\textfrac{\varepsilon_{k+1}}{N}$-accurate solution of the problem in \eqref{x_subproblem}, it needs $\bigO{\log\textfrac{N}{\varepsilon_{k+1}}}$ gradient evaluations for each $i=1,\ldots,N$. Choose $\varepsilon_{k+1}=\textfrac{\epsilon_0}{(k+1)^\gamma}$ for all $k\ge0$ and for some $\gamma>1$ where $\epsilon_0=\bigO{1-\rho}$. Then $\{\varepsilon_{k+1}\}$ is summable, and the total gradient evaluations to produce an $\varepsilon$-stationary point of (\ref{dco_problem}) would be $\sum_{k=0}^{K-1}\bigO{\log N(k+1)^\gamma} = \bigO{\textfrac{1}{\varepsilon}\log\textfrac{N}{\varepsilon^\gamma}}$.
\end{remark}


\subsection{Convergence Results of ADAPD-OG}

The convergence rate results of the ADAPD-OG  follow the same logic as the results for ADAPD; proofs are differed to Appendix~\ref{appendix:single_step}. Notice that (\ref{lemma:dual_variable_relation:y_bound}) is no longer a valid relation when Alg.~\ref{algo:single_step} is used. Instead, we have the following from (\ref{y_update}) and (\ref{x_single_step_update}):  
	\begin{equation}\label{ss:dual_variable_relation:y_bound}
		\y^k=-\nabla F(\x^{k-1})-\textfrac{1}{\eta}\left(\x_0^k-\x_0^{k-1}\right), \forall\, k\ge0.
	\end{equation}
As in the analysis for ADAPD, we define $\x_0^{-1}\triangleq\x_0^0$ and further define $\x^{-1}\triangleq\x^0.$ We have the following result.

\begin{theoremEnd}[category=singlestep]{thm}[Convergence of ADAPD-OG]\label{theorem:ss:total_convergence}
	Under Assumptions \ref{assumption:mixing_matrix} and \ref{assumption:objective_function}, let $\{(\x^k,\x_0^k;\y^k,\z^k)\}$ be obtained from Alg.~\ref{algo:single_step} or equivalently by \eqref{x_single_step_update} and \eqref{x0_update}-\eqref{z_update}. Choose $\hat{C}$ and $\eta$ such that
	{\small
			\begin{equation}\label{prop:ss:constants_bound}
			\hat{C}\triangleq\textfrac{16}{(1-\rho)^2}\text{ and }
			\eta<\textfrac{1}{2\hat{C}L}.
			\end{equation}
		}Then, it holds
	{\small
		\begin{align*}
			\textfrac{1}{K}\sum_{k=0}^{K-1}\left(\norm{\nabla f(\bar{\vec{x}}^{k+1})}_2^2+\fronorm{\x^{k+1}-\bar{\x}^{k+1}}\right)&\le\textfrac{\left((2L^2+1)\hat{C}_2+\hat{C}_3\right)\Delta_{\hat{\Phi}}}{\hat{C}_1K}
		\end{align*}
	}where $\small\hat{C}_1\triangleq\textfrac{L}{(1-\rho)^2}\le\textfrac{(1-\rho)-(\hat{C}+1)L(1-\rho)\eta-((1-\rho)+1)4L^2\eta^2}{2(1-\rho)\eta}$, $\hat{C}_2\triangleq\textfrac{112}{(1-\rho)^2}$, $\hat{C}_3\triangleq\textfrac{8}{\eta^2}$, $\Delta_{\hat{\Phi}}\triangleq\hat{\Phi}^0-\underline{f}+1$, $\bar{\vec{x}}^{k}\triangleq\textfrac{1}{N}\sum\limits_{i=1}^N\vec{x}_i^k$, and $\bar{\x}^{k}\triangleq\textfrac{1}{N}\basis\basis^\top\x^{k}$.
\end{theoremEnd}

\begin{proofEnd}\enskip 
		By (\ref{z_update}), we have
			\begin{equation}\label{theorem:ss:total_convergence:eqn1}
			\begin{split}
				\textfrac{1}{\eta}\fronorm{\big(\identity-\W\big)\x_0^{k+1}}&=\eta\fronorm{\sqrt{\identity-\W}(\z^{k+1}-\z^k)}\\
				&\le4L^2\eta\fronorm{\x^{k}-\x^{k-1}}+\textfrac{6}{\eta}\fronorm{\vec{V}_0^k}
			\end{split}
			\end{equation}
		and by (\ref{y_update}),
			\begin{equation}\label{lemma:ss:total_convergence:eqn2}
			\begin{split}
				\fronorm{\x^{k+1}-\x_0^{k+1}}&=\eta^2\fronorm{\y^{k+1}-\y^k}\\
				&\stack{(\ref{lemma:ss:dual_var_bound:z_bound})}{\le}2L^2\eta^2\fronorm{\x^{k}-\x^{k-1}}+2\fronorm{\vec{V}_0^k}.
			\end{split}
			\end{equation}
		Thus, 
			\begin{align}\label{theorem:ss:consensus_bound}
				&\textfrac{1}{K}\sum_{k=0}^{K-1}\fronorm{\x^{k+1}-\bar{\x}^{k+1}}\nonumber\\
				&\stack{(\ref{theorem:consensus:eqn1})}{\le}\enskip\textfrac{1}{(1-\rho)^2K}\sum_{k=0}^{K-1}\fronorm{\big(\identity-\W\big)\x^{k+1}}\nonumber\\
				&\le\textfrac{2}{(1-\rho)^2K}\sum_{k=0}^{K-1}\fronorm{\big(\identity-\W\big)(\x^{k+1}-\x_0^{k+1})}\\
					&\quad+\textfrac{2}{(1-\rho)^2K}\sum_{k=0}^{K-1}\fronorm{\big(\identity-\W\big)\x_0^{k+1}}\nonumber\\
				&\le\textfrac{2}{(1-\rho)^2K}\left(4\sum_{k=0}^{K-1}\fronorm{\x^{k+1}-\x_0^{k+1}}+\sum_{k=0}^{K-1}\fronorm{\big(\identity-\W\big)\x_0^{k+1}}\right)\nonumber\\
				&\le\textfrac{24L^2\eta^2}{(1-\rho)^2K}\sum_{k=0}^{K-1}\fronorm{\x^{k+1}-\x^k}+\textfrac{28}{(1-\rho)^2K}\sum_{k=0}^{K-1}\fronorm{\vec{V}_0^k}\nonumber\\
				&\stack{(\ref{fact:youngsinequality})}{\le}\textfrac{48L^2\eta^2}{(1-\rho)^2K}\sum_{k=0}^{K-1}\fronorm{\x^{k+1}-\x^k}\nonumber\\
					&\quad+\textfrac{112}{(1-\rho)^2K}\sum_{k=0}^{K-1}\fronorm{\x_0^{k+1}-\x_0^k}\nonumber\\
				&\stack{(\ref{theorem:ss:convergence:eqn1})}{\le}\textfrac{\hat{C}_2\Delta_{\hat{\Phi}}}{\hat{C}_1K}
			\end{align}
		where we have used the fact that $\norm{\identity-\W}_2\le2$ and defined $\hat{C}_2\triangleq\max\left\{\textfrac{48L^2\eta^2}{(1-\rho)^2},\textfrac{112}{(1-\rho)^2}\right\}$.
		Furthermore, we use (\ref{lemma:dual_variable_relation:z_bound}) and (\ref{ss:dual_variable_relation:y_bound}) to have
				\begin{equation}\label{theorem:ss:kkt:eqn1}
					\nabla F(\x^{k})+\sqrt{\identity-\W}\z^{k+1}=-\textfrac{1}{\eta}(\identity+\W)[\x_0^{k+1}-\x_0^k].
				\end{equation}
		Now, using Assumption~\ref{assumption:mixing_matrix}(ii), we have $\basis^\top\sqrt{\identity-\W}=\0$. Hence,
				\begin{align}\label{theorem:ss:kkt:bound}
					&\textfrac{1}{K}\sum_{k=0}^{K-1}\fronorm{\nabla f(\bar{\vec{x}}^{k+1})}\nonumber\\
					&=\textfrac{1}{K}\sum_{k=0}^{K-1}\fronorm{\textfrac{1}{N}\basis\basis^\top\left(\nabla F(\bar{\x}^{k+1})+\sqrt{\identity-\W}\z^{k+2}\right)}\nonumber\\
					&\le\norm{\textfrac{1}{N}\basis\basis^\top}_2^2\textfrac{1}{K}\sum_{k=0}^{K-1}\fronorm{F(\bar{\x}^{k+1})+\sqrt{\identity-\W}\z^{k+2}}\nonumber\\
					&\le\textfrac{2}{K}\sum_{k=0}^{K-1}\fronorm{F(\x^{k+1})+\sqrt{\identity-\W}\z^{k+2}}\nonumber\\
						&\quad+\textfrac{2}{K}\sum_{k=0}^{K-1}\fronorm{\nabla F(\bar{\x}^{k+1})-\nabla F(\x^{k+1})}\nonumber\\
					&\stack{(\ref{theorem:ss:kkt:eqn1}),(\ref{assumption:smoothness})}{\le}\quad\textfrac{2}{K}\sum_{k=0}^{K-1}\fronorm{-\textfrac{1}{\eta}(\identity+\W)[\x_0^{k+2}-\x_0^{k+1}]}\nonumber\\
						&\quad+\textfrac{2L^2}{K}\sum_{k=0}^{K-1}\fronorm{\x^{k+1}-\bar{\x}^{k+1}}\nonumber\\
					&\stack{(\ref{theorem:ss:convergence:eqn1}),(\ref{theorem:ss:consensus_bound})}{\le}\quad\textfrac{\left(2\hat{C}_2L^2+\hat{C}_3\right)\Delta_{\hat{\Phi}}}{\hat{C}_1K},
				\end{align}
			where we have used $\norm{\identity+\W}_2\le2$ in last inequality and defined $\hat{C}_3\triangleq\textfrac{8}{\eta^2}$. 
	Finally, we have that
		\begin{align*}
			&\min_{1\le k'\le K}\left(\norm{\nabla f(\bar{\vec{x}}^{k'})}_2^2+\fronorm{\x^{k'}-\bar{\x}^{k'}}\right)\\
			&\le\textfrac{1}{K}\sum_{k=0}^{k-1}\left(\norm{\nabla f(\bar{\vec{x}}^{k+1})}_2^2+\fronorm{\x^{k+1}-\bar{\x}^{k+1}}\right)\\
			&\stack{(\ref{theorem:ss:consensus_bound}),(\ref{theorem:ss:kkt:bound})}{\le}\quad\textfrac{\left((2L^2+1)\hat{C}_2+\hat{C}_3\right)\Delta_{\hat{\Phi}}}{\hat{C}_1K}.
		\end{align*}
		We complete the proof.
\end{proofEnd}

\begin{remark}\label{rm:big_0}
Theorems~\ref{theorem:total_convergence} and ~\ref{theorem:ss:total_convergence} give the convergence results in terms of the constants $C_1,C_2,C_3,$ and $C_4$ for Alg.~\ref{algo:} (or $\hat{C}_1,\hat{C}_2,\hat{C}_3$, and $\hat{C}_4$ for Alg.~\ref{algo:single_step}) which depend on $C$ ($\hat{C}$) and $\eta$, and in turn depend on $L$ and $\rho.$ To make this dependency clearer, we use the $\bigO{\cdot}$ notation to give dependency only in terms of $L,\rho,$ and the algorithm iteration number $K$. For Alg.~\ref{algo:}, using~\eqref{error_summation}, and for Alg.~\ref{algo:single_step}, we have
	\begin{equation}\label{rm:big_0:bound}
	\textfrac{1}{K}\sum_{k=0}^{K-1}\left(\norm{\nabla f(\bar{\vec{x}}^{k+1})}_2^2+\fronorm{\x^{k+1}-\bar{\x}^{k+1}}\right)=\bigO{\textfrac{L}{(1-\rho)^2K}}.
	\end{equation}
\end{remark}

\subsection{Complexity Analysis}\label{sec:complexity}

We now give a complexity analysis for Alg.'s~\ref{algo:} and~\ref{algo:single_step} regarding the number of primal gradient computations and neighbor communications each method must perform to find an $\varepsilon$-stationary point (see Definition~\ref{def:stationarity}); we refer to these quantities as the \emph{computation} and \emph{communication} complexities, respectively. This leads to the following corollaries, whose proofs are in Appendix~\ref{appendix:complexity}.
\begin{theoremEnd}[category=complexity]{corollary}[Complexity results of ADAPD]\label{corollary:complexity_adapd}
	Under the same conditions assumed in Theorem~\ref{theorem:total_convergence}, if steepest gradient descent is used to solve the subproblem~\eqref{x_subproblem}, such that conditions~\eqref{x_optimality} and~\eqref{error_summation} hold, then Alg.~\ref{algo:} can produce an $\varepsilon$-stationary point in respectively
		\begin{equation}\label{corollary:complesity_adapd:eqn}
			\tilde{\mathcal{O}}\left(\textfrac{L}{(1-\rho)^2\varepsilon}\right)\text{ and }\bigO{\textfrac{L}{(1-\rho)^2\varepsilon}}
		\end{equation}
	gradient computations\footnote{The $\tilde{\mathcal{O}}(\cdot)$ hides a log dependency on $\varepsilon$ here.} and neighbor communications.
\end{theoremEnd}
\begin{proofEnd}\enskip
	By Remark~\ref{rm:big_0}, since one communication round is performed during each iteration of Alg.~\ref{algo:}, the communication complexity in~\eqref{corollary:complesity_adapd:eqn} follows from setting~\eqref{rm:big_0:bound} less than or equal to $\varepsilon$ and solving for $K$. Remark~\ref{rm:local_complexity} demonstrates the additional logarithmic dependence on the number of gradient computations.
\end{proofEnd}

\begin{theoremEnd}[category=complexity]{corollary}[Complexity results of ADAPD-OG]\label{corollary:complexity_adapd_og}
	Under the same conditions assumed in Theorem~\ref{theorem:ss:total_convergence}, Alg.~\ref{algo:single_step} can produce an $\varepsilon$-stationary point in
		\begin{equation}\label{corollary:complesity_adapd_og:eqn}
			\bigO{\textfrac{L}{(1-\rho)^2\varepsilon}}
		\end{equation}
	gradient computations and neighbor communications.
\end{theoremEnd}
\begin{proofEnd}\enskip
	Since only one communication and one gradient computation are performed during Alg.~\ref{algo:single_step},~\eqref{corollary:complesity_adapd_og:eqn} follows from setting~\eqref{rm:big_0:bound} less than or equal to $\varepsilon$ and solving for $K$.
\end{proofEnd}

Corollaries~\ref{corollary:complexity_adapd} and~\ref{corollary:complexity_adapd_og} show that both ADAPD and ADAPD-OG depend upon the quantity $(1-\rho)^{-2}$ in terms of the number of communications required to achieve $\varepsilon$-stationarity. To improve this to the optimal communication complexity in terms of the dependence on $\rho$ (see, e.g.~\cite{scaman17}), we have the following theorem.

\begin{theoremEnd}[category=complexity]{thm}[Complexity results of ADAPD-MC]\label{theorem:complexity_mc}
	Under the same conditions assumed in Theorem~\ref{theorem:total_convergence}, let $R=\lceil\textfrac{2}{\sqrt{1-\rho}}\rceil$ iterations of the Chebyshev acceleration Alg.~\ref{algo:cheby} be performed during the line 9 update of Alg.~\ref{algo:}. Then Alg.~\ref{algo:} can produce an $\varepsilon$-stationary point in $\tilde{\mathcal{O}}\left(\textfrac{L}{\varepsilon}\right)$ and $\bigO{\textfrac{L}{\sqrt{1-\rho}\varepsilon}}$ gradient computations and neighbor communications, respectively.
\end{theoremEnd}
\begin{proofEnd}\enskip
	By Lemma~\ref{lemma:chebyshev}, the dependence on the spectrum of the graph after $R$ iterations of Alg.~\ref{algo:cheby} becomes $2\left(1-\sqrt{1-\rho}\right)^R$; define this quantity to be $\rho_R\triangleq 2\left(1-\sqrt{1-\rho}\right)^R$ such that~\eqref{rm:big_0:bound} becomes
				\begin{equation}\label{theorem:complexity_mc:eqn1}
					\textfrac{1}{K}\sum_{k=0}^{K-1}\left(\norm{\nabla f(\bar{\vec{x}}^{k+1})}_2^2+\fronorm{\x^{k+1}-\bar{\x}^{k+1}}\right)=\bigO{\textfrac{L}{(1-\rho_R)^2K}}.
				\end{equation}
		With $R=\lceil\textfrac{2}{\sqrt{1-\rho}}\rceil$, we find a $u>0$ such that,
			\begin{align*}
				\textfrac{1}{\left(1-2(1-\sqrt{1-\rho})^{\lceil\frac{2}{\sqrt{1-\rho}}\rceil}\right)^2}\le u.
			\end{align*}
		First, we rearrange to have
			\begin{align*}
				\left(1-\sqrt{1-\rho}\right)^{\lceil\frac{2}{\sqrt{1-\rho}}\rceil}\le\textfrac{\sqrt{u}-1}{2\sqrt{u}}.
			\end{align*}
		Now, let $x=\sqrt{1-\rho}\in(0,1]$, then $(1-x)\in[0,1)$ and $\textfrac{2}{x}\le\lceil\textfrac{2}{x}\rceil$ so that
			\begin{align*}
				\left(1-x\right)^{\lceil\frac{2}{x}\rceil}\le\left(1-x\right)^{\frac{2}{x}}.
			\end{align*}
		Next, we maximize this quantity with respect to $x\in(0,1].$ Define $g(x)\triangleq\left(1-x\right)^{\frac{2}{x}}$ and compute $\textfrac{d}{dx}g(x)$ to have
			\begin{align*}
				\textfrac{d}{dx}g(x)=-\left(1-x\right)^{\frac{2}{x}}\left(\textfrac{2}{x(1-x)}+\frac{2\ln(1-x)}{x^2}\right) < 0, \forall\, x \in (0,1).
			\end{align*}
		Hence, $g(x)$ is decreasing on $(0,1)$. Since $g(0+) = \textfrac{1}{e^2}$, we have $g(x)<\textfrac{1}{e^2}$ for $x\in(0,1].$ Now we compute,
			\begin{align*}
				\textfrac{1}{e^2}\le\textfrac{\sqrt{u}-1}{2\sqrt{u}},
			\end{align*}
		which holds for all $u\ge 2.$ Thus, it holds that $(1-\rho_R)^{-2}\le2.$ Hence we have the number of gradient computations is independent of $\rho_R$ and the number of neighbor communications must be multiplied by $R=\bigO{\textfrac{1}{\sqrt{1-\rho}}}.$
\end{proofEnd}

\begin{theoremEnd}[category=complexity]{thm}[Complexity results of ADAPD-OG-MC]\label{theorem:complexity_og_mc}
	Under the same conditions assumed in Theorem~\ref{theorem:ss:total_convergence}, let $R=\lceil\textfrac{2}{\sqrt{1-\rho}}\rceil$ iterations of the Chebyshev acceleration Alg.~\ref{algo:cheby} be performed during the line 9 update of Alg.~\ref{algo:single_step}. Then Alg.~\ref{algo:single_step} can produce an $\varepsilon$-stationary point in $\mathcal{O}\left(\textfrac{L}{\varepsilon}\right)$ and $\bigO{\textfrac{L}{\sqrt{1-\rho}\varepsilon}}$ gradient computations and neighbor communications, respectively.
\end{theoremEnd}
\begin{proofEnd}\enskip
	The proof follows the same logic as the proof of Theorem~\ref{theorem:complexity_mc}.
\end{proofEnd}
\section{Numerical Experiments}\label{sec:numerical}

We test our proposed methods on several non-convex problems: (i) a binary classification problem using logistic regression with a non-convex regularizer, (ii) a multi-target cooperative localization problem, and (iii) two image classification problems using convolutional neural networks. The experiments serve to verify both the flexibility of our methods, as well as their numerical superiority over other decentralized optimization methods. Implementations of our methods are made available at~\url{https://github.com/RPI-OPT/ADAPD}.

For experiments (i) and (ii), we compare our methods to DGD with a diminishing step-size~\cite{zeng18} and the single gradient version of Prox-PDA, called Prox-GPDA~\cite{hong17}. We also ran experiments with Prox-PDA but found no advantage over using Prox-PDA versus Prox-GPDA; since Prox-GPDA only requires one gradient computation per update, we use this as a baseline. For Alg.~\ref{algo:}, we use $\epsilon_{k}=\frac{\hat{\epsilon}}{(k+1)^d}$ in (\ref{x_optimality}) where $\hat{\epsilon}$ and $d$ are tuned from a fixed set of values and solve each agent's  local problem (\ref{x_subproblem}) by the FISTA~\cite{beck09} method. For experiment (iii), we compare to D-PSGD~\cite{lian17}, DSGT~\cite{zhang20gradtrack}, D-GET~\cite{sun20}, a single stochastic gradient implementation of Prox-PDA~\cite{hong17}, and SPPDM~\cite{wang21}. For all experiments, we fix a set of penalty parameters/step-sizes and optimize each algorithm over this set, choosing whichever penalty/step-size performs the best. For all methods besides Prox-(G)PDA and SPPDM, we use the same mixing matrix, which will be described in each subsection below. For Prox-(G)PDA, we take $\W$ to be the formulation as given in~\cite{hong17} (see equation (23) in~\cite{hong17} and the discussion that follows) and for SPPDM, we use the graph Laplacian as stated in their problem formulation.

\subsection{Non-convex Regularized Logistic Regression}

We consider the non-convex decentralized binary classification problem~\cite{hong17,zhang21fedpd}. Utilizing a logistic regression formulation, the local agent cost functions are given by,
	\begin{equation}\label{numerical:logistic_regression}
		\small f_i(\vec{x}_i)=\textfrac{1}{m_i}\sum_{j=1}^{m_i}\log\left(1+\exp(-b_j\ip{\vec{x}_i,\vec{a}_j})\right)+\sum_{d=1}^{D}\textfrac{\alpha(\vec{x}_i[d])^2}{1+(\vec{x}_i[d])^2}
	\end{equation}
where $\vec{x}_i[d]$ denotes the $d^{th}$ component of the vector $\vec{x}_i.$ Given a set of data $\{(\vec{a}_j,b_j)\}_{j=1}^{m_i}$ for all $i=1,\dots,N$, where $b_j\in\{-1,+1\}$ denotes a particular class label, (\ref{numerical:logistic_regression}) can be used to perform binary classification and the non-convex regularizer, $\sum_{d=1}^{D}\textfrac{\alpha(\vec{x}_i[d])^2}{1+(\vec{x}_i[d])^2}$ helps to promote sparsity on the solutions. We use the a9a dataset~\cite{chang11,dua17} which consists of $32{,}561$ training data points and $16{,}281$ testing data points. Each data point $\vec{a}_j\in\R^{123}$ contains numerical features about adults from the 1994 Census database and $b_j$ indicates whether or not the adults earn more or less than \$$50{,}000$ per year. We fix $N=50$ for this experiment and simulate agent connectivity in two ways: (i) using a ring-structured graph and (ii) using a random Erd\"os R\'enyi graph, with connection probability equal to 0.3 (i.e. each agent is connected to roughly 15 other agents). 

For the ring-structured graph, we choose $\W$ to be
	\begin{align*}
		w_{ij}&=\begin{cases}
			\textfrac{1}{2},&i=j,\\
			\textfrac{1}{4}, &(i,j)\in\edges\text{ and }i\ne j,\\
			0,&\text{otherwise},
		\end{cases}
\end{align*}
and for the random Erd\"os R\'enyi graph, we use the Laplacian-based constant edge weight matrix from (\ref{mixing_mat:laplacian}). We vary $\alpha\in\{0.01,1.0\}$ to study the effect that the non-convex term has on each agent's local subproblem. For all runs, we fix the communication budget to 500 neighbor communications. Additionally, we compare ADAPD-MC and ADAPD-OG-MC to the other methods. We perform 5 iterations of the Chebyshev acceleration in Alg.~\ref{algo:cheby} during every outer iteration of Alg.~\ref{algo:} for ADAPD-MC and 2 iterations for ADAPD-OG-MC. This means we only compute 250 gradients for ADAPD-OG-MC, to keep with the 500 communication budget. We report the $\varepsilon$-stationarity violation (\ref{def:stationarit:eqn}) for the following four scenarios: (i) the random Erd\"os R\'enyi graph with $\alpha=1.0$, (ii) the random Erd\"os R\'enyi graph with $\alpha=0.01$, (iii) the ring graph with $\alpha=1.0,$ and (iv) the ring graph with $\alpha=0.01.$

\begin{figure*}[h]
		
		\centering
		\setlength\tabcolsep{1pt}
		\begin{tabular}{cccc}
			
							\includegraphics[width=0.22\linewidth]{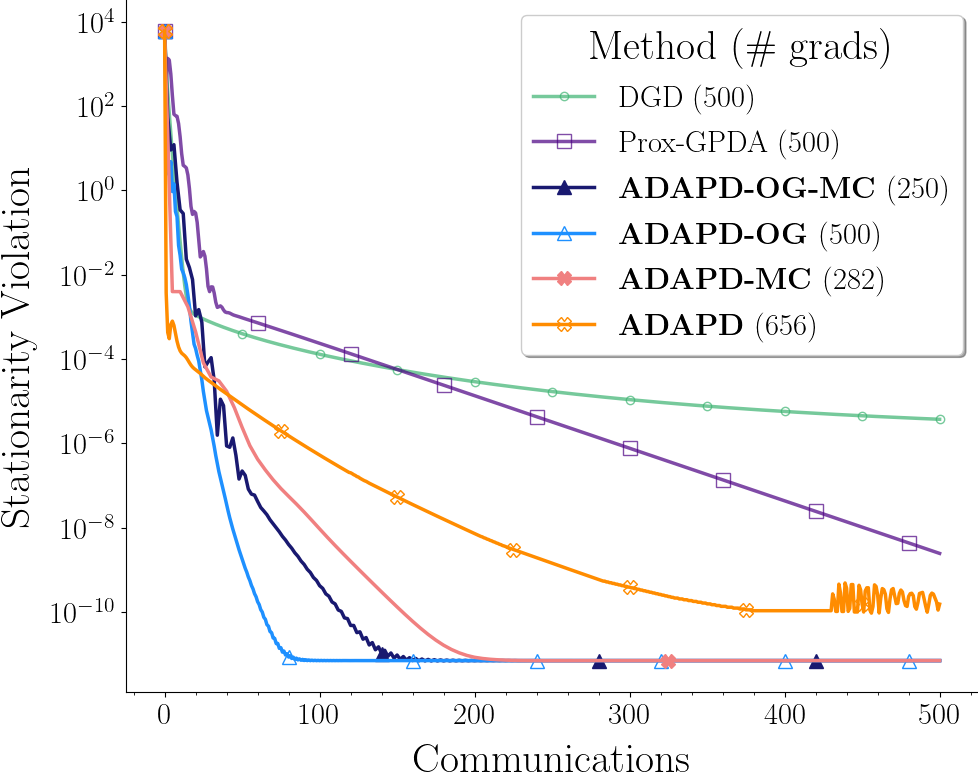} &
		
							\includegraphics[width=0.22\linewidth]{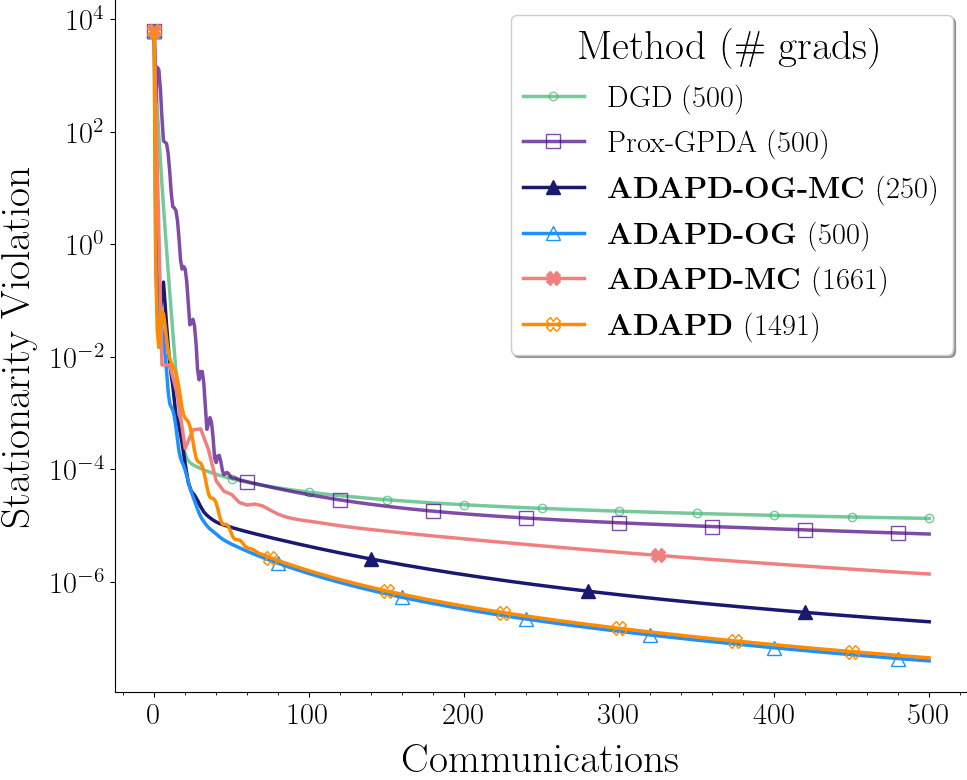} &
		
							\includegraphics[width=0.22\linewidth]{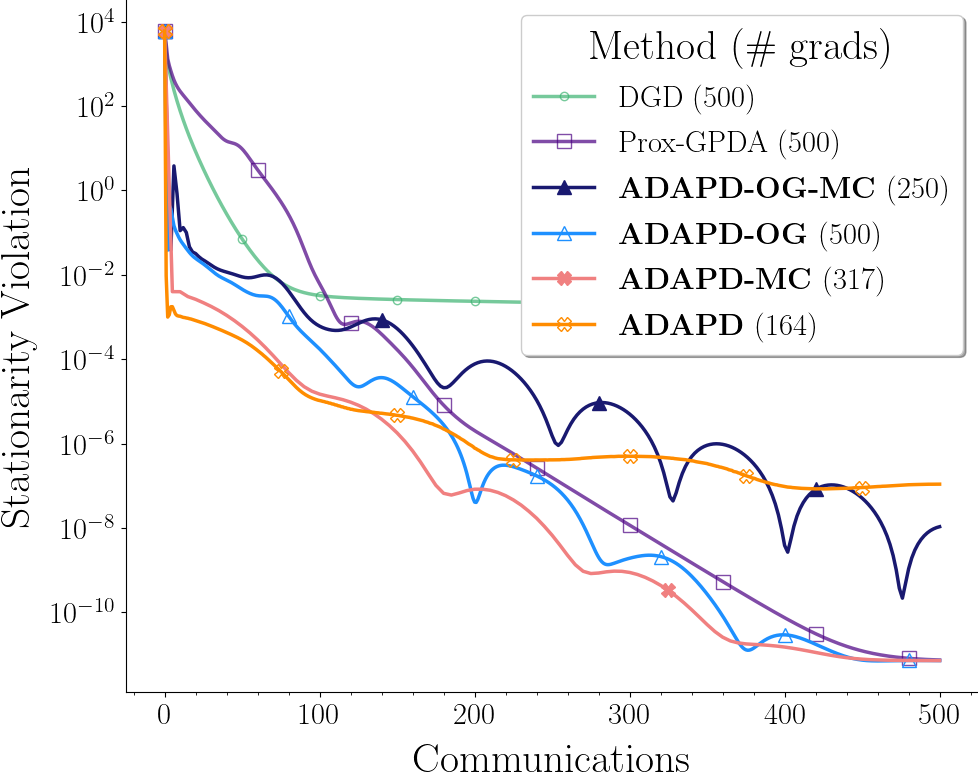} &
		
							\includegraphics[width=0.22\linewidth]{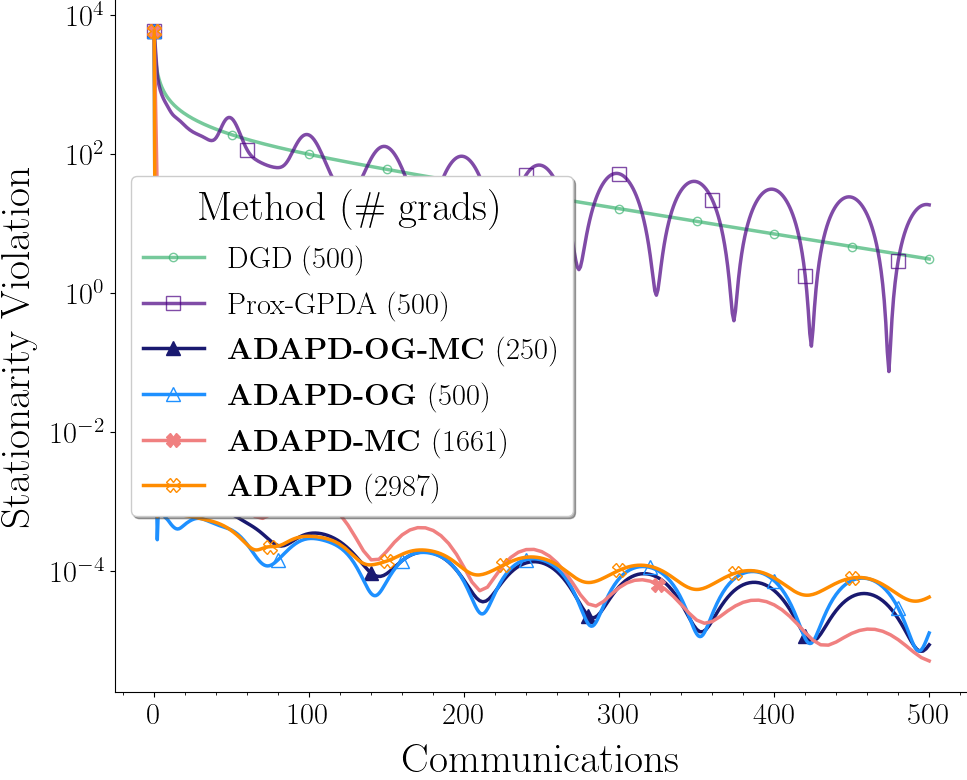} 
		
			\end{tabular}
				
			\caption{Stationarity violation for the non-convex logistic regression problem (in order from left to right): random Erd\"os R\'enyi graph with $\alpha=1.0$, random Erd\"os R\'enyi graph with $\alpha=0.01$, ring-structured graph with $\alpha=1.0,$ and ring-structured graph with $\alpha=0.01.$}
				
			\label{fig:log_reg_results}
		
\end{figure*}

From Figure~\ref{fig:log_reg_results}, it is evident that when the communication pattern is sparse (i.e. the two rightmost plots), performing multiple communications and multiple local updates can reduce the stationarity violation faster over performing just one neighbor communication or just one local update. When the communication pattern is not too sparse (i.e. the two leftmost plots), ADAPD-OG performs significantly better and requires fewer gradients than the other methods compared here.  In all cases, ADAPD and it's variants outperform DGD and Prox-GPDA.

\subsection{Multi-Target Cooperative Localization}

Multi-target cooperative localization is a target locating problem~\cite{chen12}: given only a noisy distance metric, can $N$ agents locate $N_T$ common targets? Let $\{\omega_{i}\}_{i=1}^N$ be a set of locations of the agents, i.e. $\omega_i\in\R^2$ for all $i=1,\dots,N.$ Then the local objective function for each agent is given by
	\begin{equation}\label{numerical:cooperative_localization}
		f_{i}(\vec{x}_i)=\textfrac{1}{4}\sum_{t=1}^{N_T}\left(\xi_{i,t}-\norm{\vec{x}_i[t]-\omega_i}_2^2\right)^2
	\end{equation}
where $\xi_{i,t}$ is a random variable that represents a noisy distance metric, and $\vec{x}_i=\mymatrix{\vec{x}_i[1]^\top&\dots&\vec{x}_i[N_T]^\top}^\top\in\R^{2N_T}$ is a stacking of the vectors $\{\vec{x}_i[t]\}_{t=1}^{N_T}$. Note that (\ref{numerical:cooperative_localization}) is indeed non-convex, but it is not globally $L$-smooth for any $L\ge0.$ However, we still find this problem is valuable to test our methods. Denote the true targets as $\vec{x}^*[t]$ for all $t=1,\dots,N_T;$ these are used to generate $\xi_{i,t}$ for all $i$ and $t$ by computing $\xi_{i,t}=\norm{\vec{x}^*[t]-\omega_{i}}_2^2+\epsilon_{i,t}$ where $\epsilon_{i,t}$ is drawn from a normal distribution with mean 0 and variance $\sigma^2>0.$ For all of our experiments we set $\sigma^2=0.01.$ We simulate agent connectivity by randomly generating $N=50$ agents in $[-1,1]\times[-1,1]$ grid and creating an edge between agents if the Euclidean distance between them is less than or equal to 0.3. Each coordinate in the targets $\{\vec{x}^*[t]\}_{t=1}^{N_T}$ is drawn independently from a normal distribution with mean 0 and variance 0.1. Figure~\ref{fig:coop_loc_results} shows the connectivity of the agents, as well as an example of target locations.

\begin{figure*}[h]
		
		\centering
		\setlength\tabcolsep{1pt}
		\begin{tabular}{cccc}
			
							\includegraphics[width=0.22\linewidth]{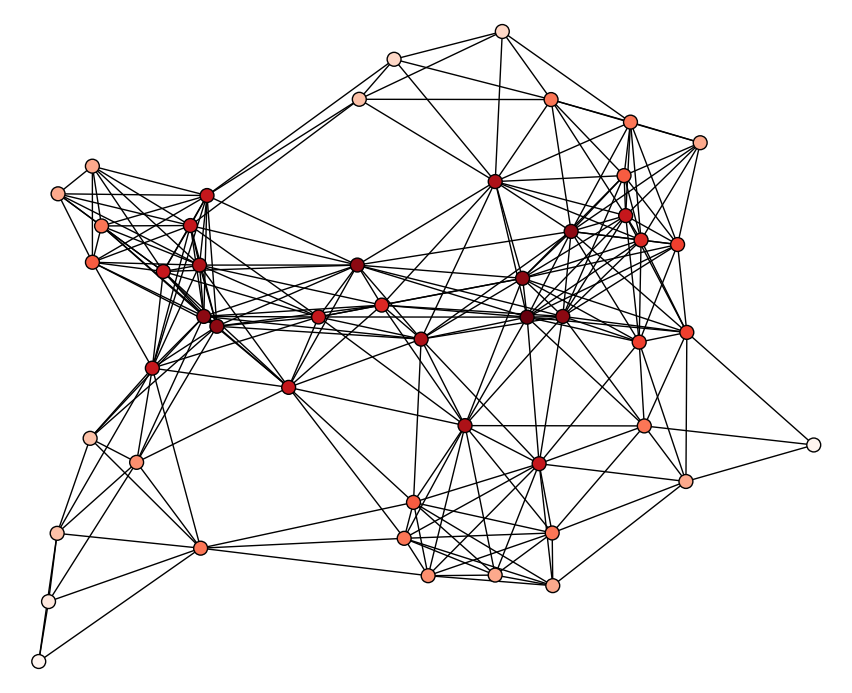} &
		
							\includegraphics[width=0.22\linewidth]{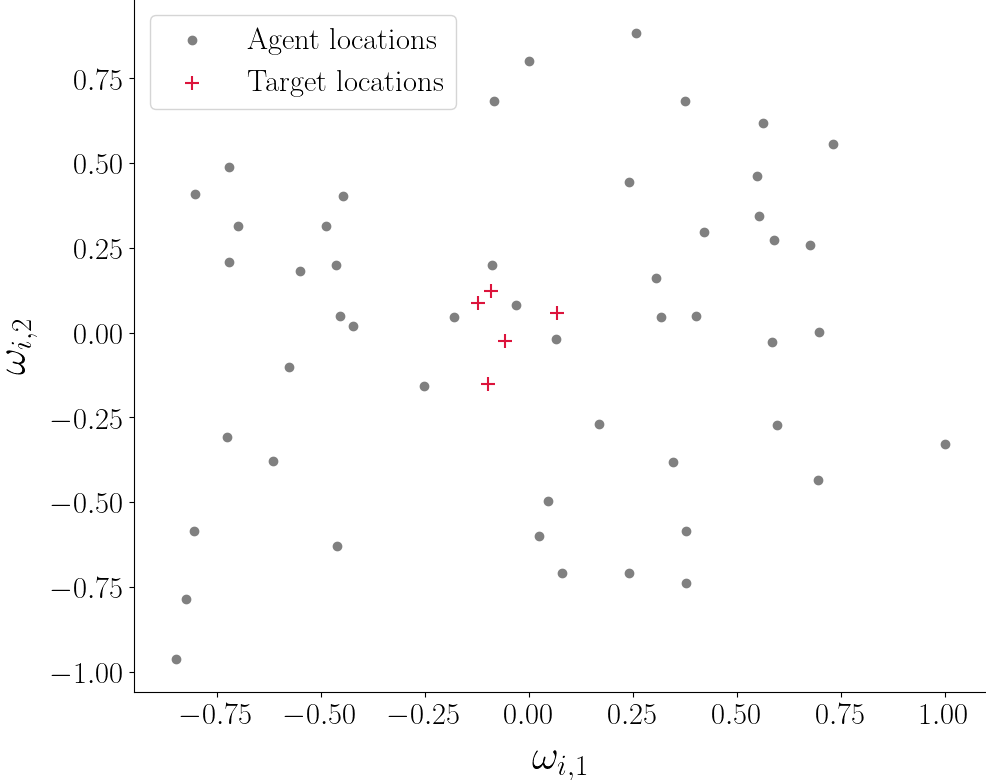} &
		
							\includegraphics[width=0.22\linewidth]{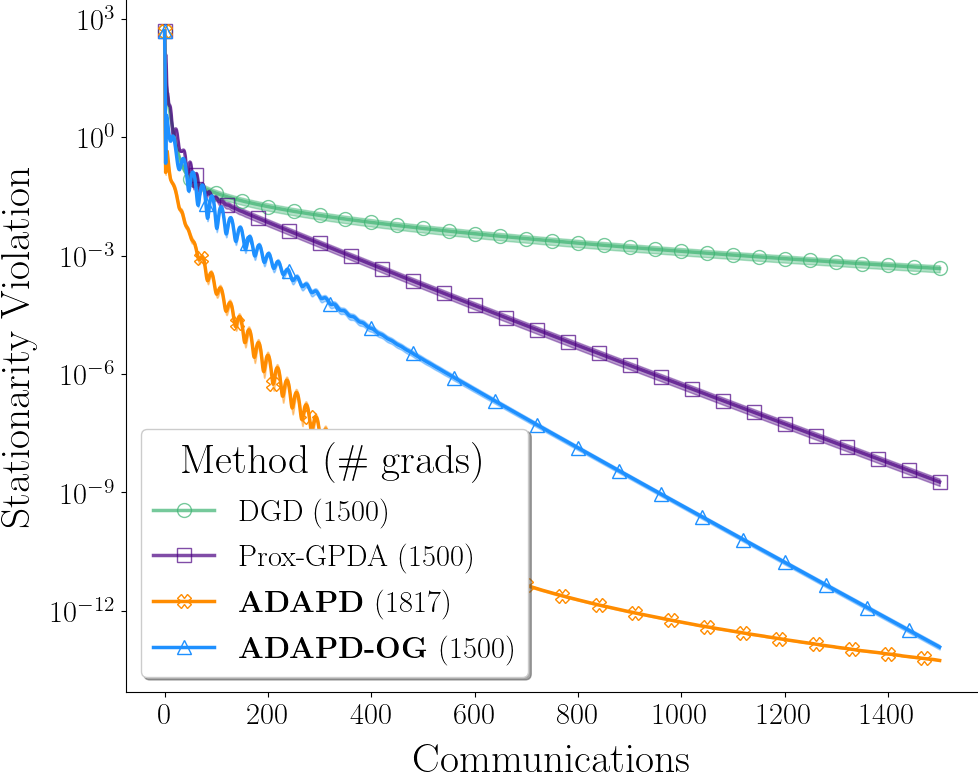} &
		
							\includegraphics[width=0.22\linewidth]{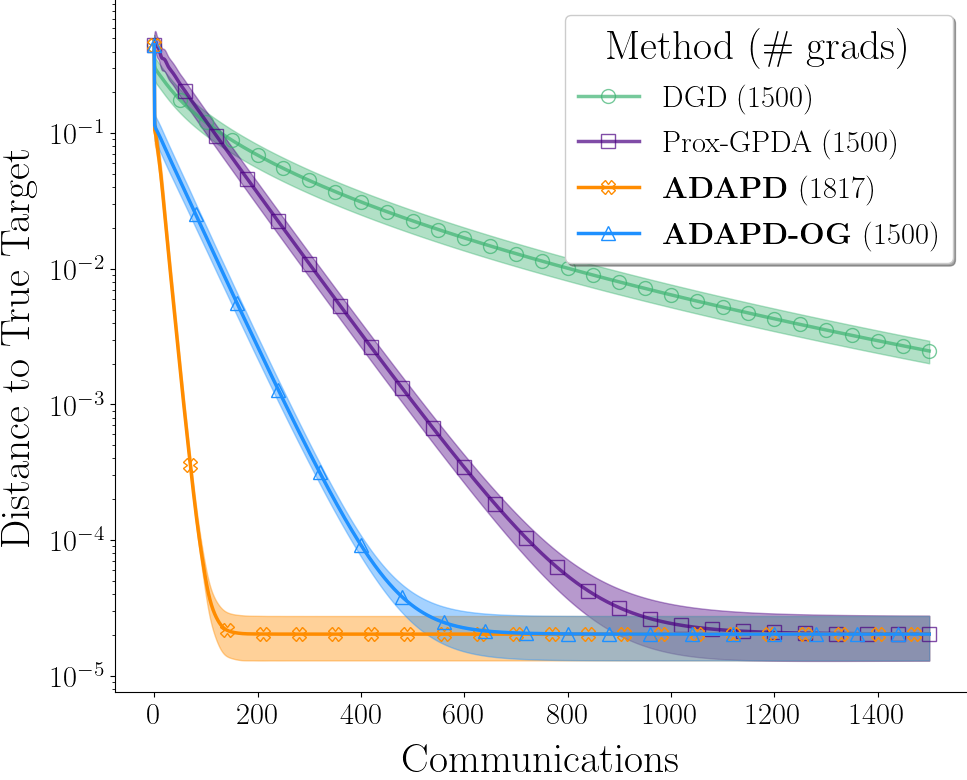} 
		
			\end{tabular}
				
			\caption{In order from left to right: agent locations and their connectivity (darker colors indicate more connections), example of target locations, stationarity violation, and distance to true targets for the multi-target cooperative localization problem.}
				
			\label{fig:coop_loc_results}
		
\end{figure*}

For this example, $\W$ is chosen to be the Laplacian-based constant edge weight matrix from (\ref{mixing_mat:laplacian}). We randomly generate $N_T=5$ targets and limit the communications to be 1${,}$500 for all algorithm runs, with all methods starting from the same initial point. Since the targets are randomly generated for each experiment, we perform 10 independent trials and plot the mean results, with an associated 95\% confidence interval.

Figure~\ref{fig:coop_loc_results} shows that in terms of stationarity violation, ADAPD is superior to DGD and Prox-GPDA. Using only 20\% more gradient computations on each agent, ADAPD is able to both solve the localization problem and find the true targets with fewer communications than the other methods. Additionally, ADAPD-OG utilizes the same number of gradient computations and neighbor communications as Prox-GPDA and DGD, but still performs better.

\subsection{Convolutional Neural Networks}

For these experiments, we fix $N=8$ agents and use a ring-structured graph where self-weighting and neighbor weighting is set to be $\textfrac{1}{3}$. We train the models on a cluster of 8 NVIDIA Tesla V100 GPUs, where each GPU represents an agent. PyTorch is used in the training of the models and OpenMPI is used to perform the neighbor communication of the neural network weights. All experiments are performed with 10 different initial starting points. We report the average results, as well as a 95\% confidence interval taken over the 10 trials.

\subsubsection{MNIST}

The first Convolutional Neural Network (CNN) experiment we perform is training LeNet~\cite{lecun98} on the MNIST dataset. We make the activation function for each layer the hyperbolic tangent function to ensure smoothness of the local objective functions. Since methods like DSGT~\cite{zhang20gradtrack} and D-GET~\cite{sun20} require multiple neighbor communications during each update, we instead fix the number of \emph{epochs} for this experiment to 50 and fix the mini-batch size to 64 for all methods. We randomly generate 10 sets of initial points for the agents and report the average of all relevant metrics, as well as a 95\% confidence interval. For ADAPD and ADAPD-OG, we simply replace the full gradient computation by a stochastic gradient during each local agent update. It is worth noting that neither ADAPD, nor Prox-PDA, have theoretical convergence guarantees in this experimental setting. Nonetheless, we see impressive results for this problem and thus include it. To see the effect of stochasticity here, we run the ADAPD in Alg.~\ref{algo:} by computing both one and two stochastic gradients during line 3. For Prox-PDA, we compute one stochastic gradient step. Similar to~\cite{lian17}, we report the stationarity violation for all methods, as well as the training loss and testing accuracy using the average of the local agent's weights\footnote{Similar results for both the MNIST and CIFAR-10 image classification problems are observed if the local weights are used to compute the training loss and testing accuracy.}. In practice, this is not feasible due to the decentralized communication pattern, however, an average model can be obtained after all local training has been done by performing many neighbor communication rounds~\cite{xiao04}. Note that the training loss reported here is not scaled by $\textfrac{1}{N}$ to facilitate a fair comparison with standard CNN training methods (i.e. centralized training).

\begin{figure*}[h]

	\centering
	\setlength\tabcolsep{1pt}
	\begin{tabular}{cccc}
	
					\includegraphics[width=0.22\linewidth]{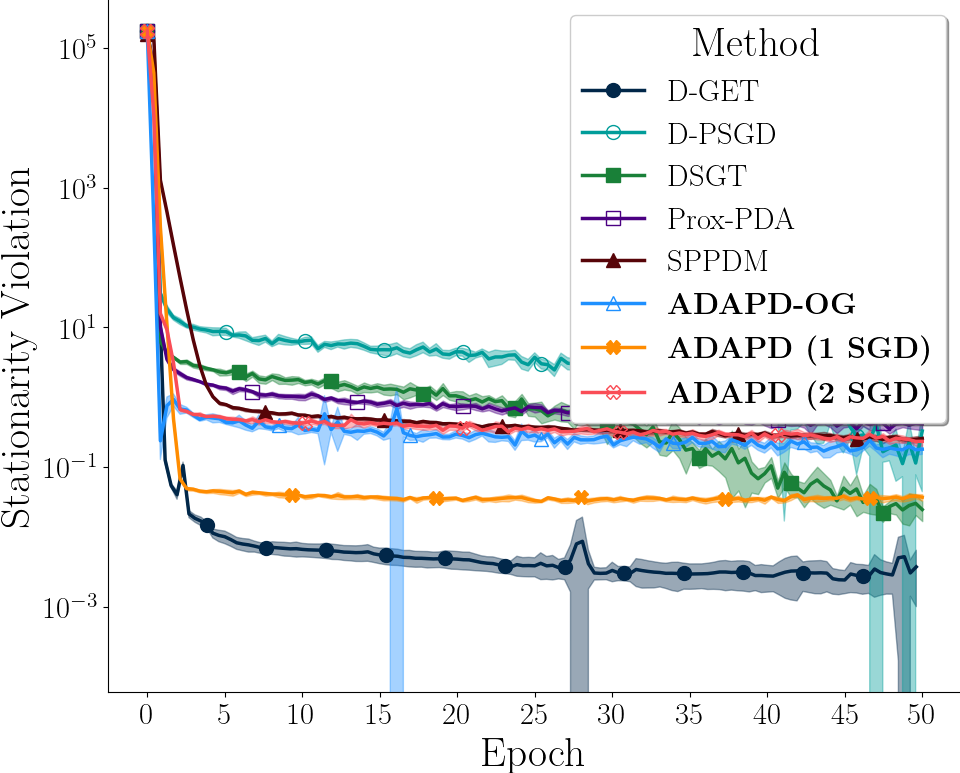} &

					\includegraphics[width=0.22\linewidth]{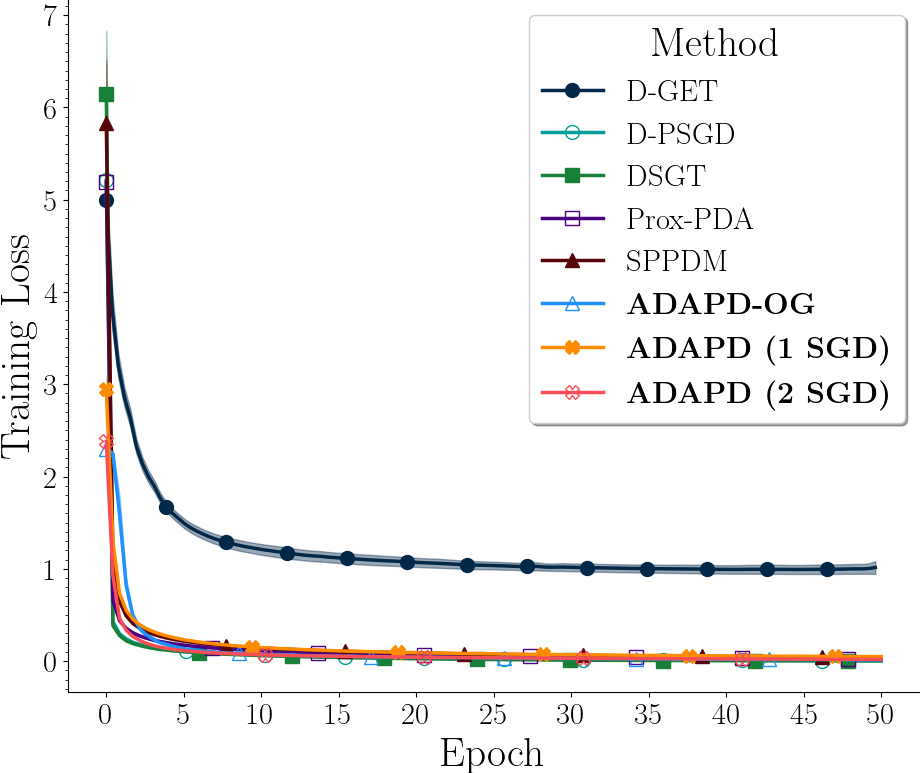} &

					\includegraphics[width=0.22\linewidth]{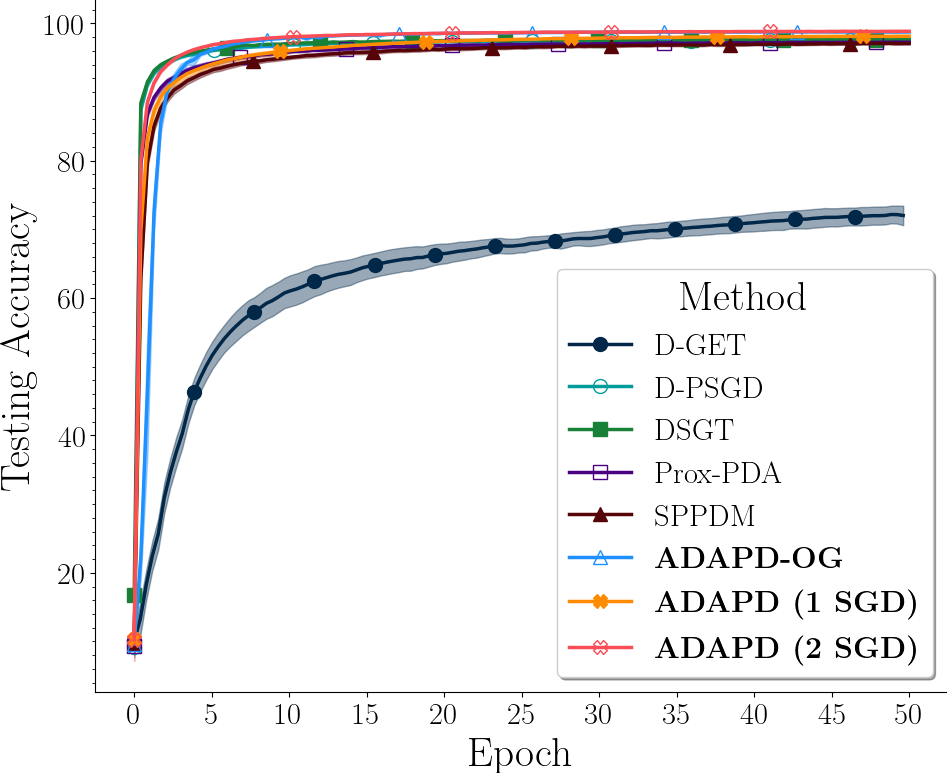} &

					\includegraphics[width=0.22\linewidth]{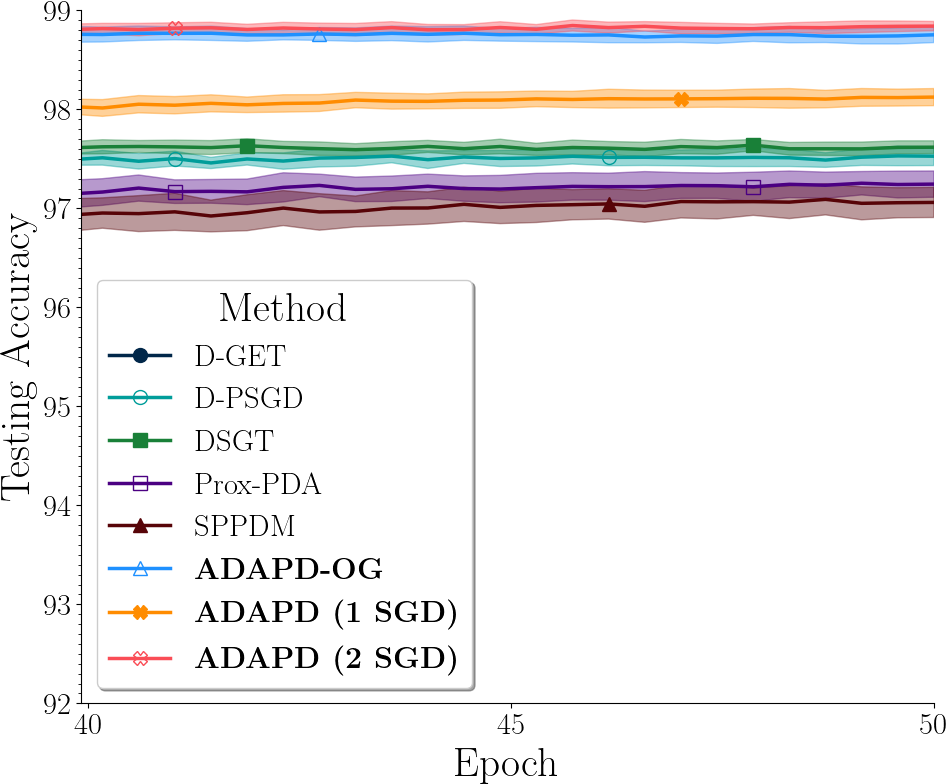} 

	\end{tabular}
		
		\caption{In order from left to right: stationarity violation, training loss, testing accuracy, and a zoomed version of the testing accuracy over the last ten epochs for the MNIST image classification problem.}
		
		\label{fig:mnist_results}
\end{figure*}

Additionally, we report the wall-clock time taken to reach and stay above 97\% testing accuracy for the MNIST image classification problem in Table~\ref{table:time_comparison_mnist}. This value comes from selecting the highest whole number of testing accuracy that most methods exceed. D-GET does not achieve this accuracy in the alloted amount of epochs. The ``Samples'' column indicates the amount of data visited by each agent to achieve the 97\% testing accuracy and the  ``Communications'' column indicates the corresponding number of communications performed by each agent (for D-GET, these values are simply the total numbers used during training). We also include each method's highest testing accuracy in the last column.

\begin{table*}[h]
		\begin{center}
		{\small
			\begin{tabular}{lcccc}
			\toprule
			\multicolumn{1}{c}{\multirow{2}{*}{Method}} & \multicolumn{3}{c}{ To reach 97\% testing accuracy}  & \multirow{2}{*}{Highest accuracy (\%)}\\
			\cmidrule{2-4}
			& Time (s) & Samples & Communications &\\
			\midrule
			D-GET & \xmark & 376,524 & 4,096 & 72.17 \\
			D-PSGD & 31.56 & 92,800 & 1,450 & 97.53 \\
			DSGT & 26.78 & 73,600 & 2,300 & 97.64 \\
			Prox-PDA & 80.99 & 227,200 & 3,550 & 97.25 \\
			SPPDM & 116.73 & 326,400 & 5,100 & 97.09 \\
			ADAPD-OG & \textbf{16.35} & \textbf{48,000} & 750 & 98.77 \\
			ADAPD (1 SGD) & 74.2 & 121,600 & 1,900 & 98.12 \\
			ADAPD (2 SGD) & 47.5 & 83,200 & \textbf{650} & \textbf{98.85} \\
			\bottomrule
			\end{tabular}}
		\end{center}
		\caption{Time to reach 97\% testing accuracy on the MNIST image classification problem. Final column represents highest overall testing accuracy. Bold items indicate the best value.}
		\label{table:time_comparison_mnist}
\end{table*}

While D-GET is able to achieve the lowest stationarity violation, the training loss and testing accuracy indicate it does not converge to a solution that solves the classification problem well. Figure~\ref{fig:mnist_results} and Table~\ref{table:time_comparison_mnist} show that both ADAPD and ADAPD-OG outperform competitors in terms of testing accuracy, suggesting that ADAPD is able to find a solution that generalizes better than other methods. Additionally, Table~\ref{table:time_comparison_mnist} shows that ADAPD (with 2 SGD steps) and ADAPD-OG require far fewer communications to achieve a high testing accuracy. In a network setting where communication time dominates the computation time, ADAPD and its variants can outperform the competitors.

\subsubsection{CIFAR-10}

The second CNN experiment we perform is training the ALL-CNN model~\cite{springenberg15} on the CIFAR-10 dataset~\cite{krizhevsky09}. We add batch normalization after every ReLU activation function and perform no data augmentation prior to training. For these experiments, we fix the mini-batch size to 128 for all methods and limit the number of updates so that each method runs for 500 epochs. We only use the ADAPD algorithm with 1 stochastic gradient step for these experiments, but we tune the dual step-size in (\ref{y_update}) and (\ref{z_update}). In Figure~\ref{fig:cifar_results}, we report the same metrics as in the MNIST experiment.

\begin{figure*}[h]

	\centering
	\setlength\tabcolsep{1pt}
	\begin{tabular}{cccc}

					\includegraphics[width=0.22\linewidth]{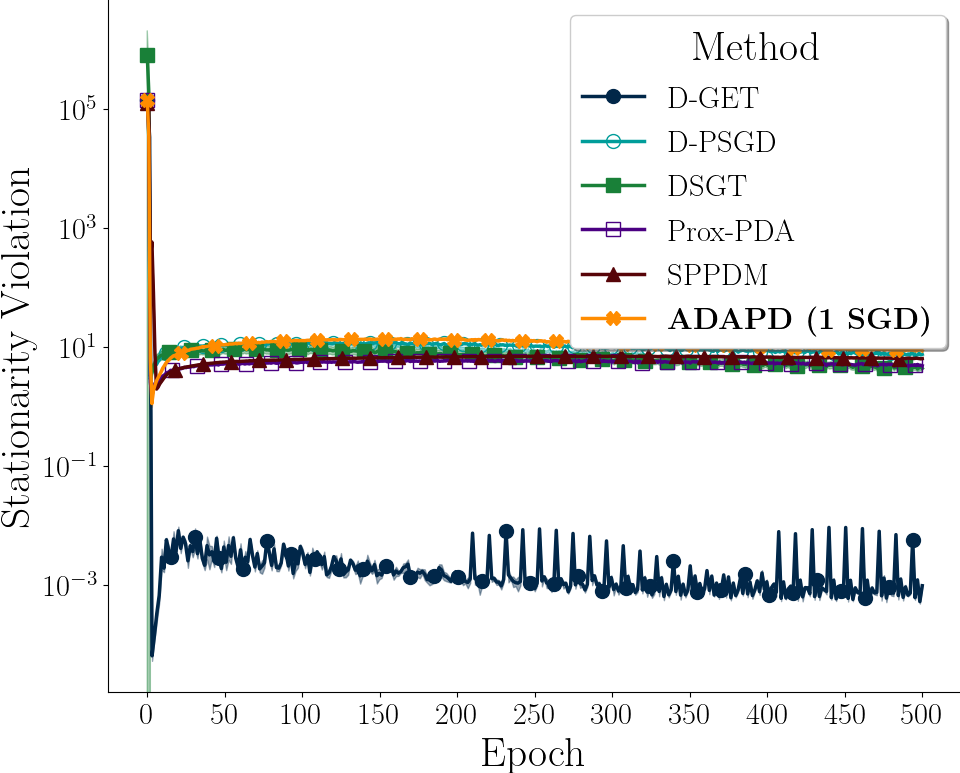} &

					\includegraphics[width=0.22\linewidth]{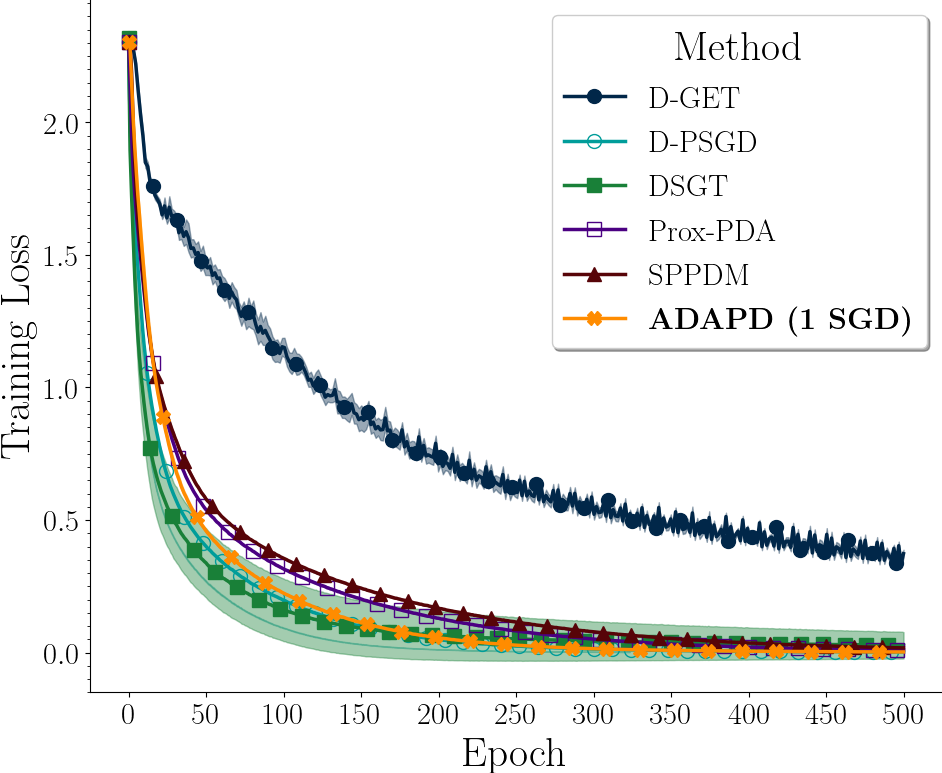} &

					\includegraphics[width=0.22\linewidth]{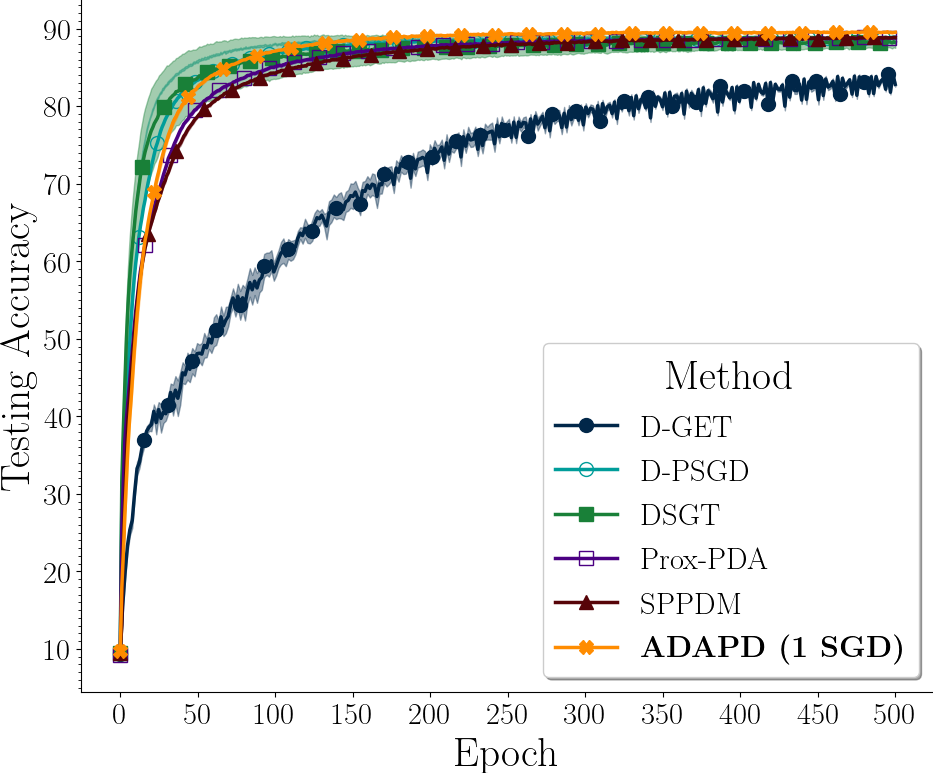} &

					\includegraphics[width=0.22\linewidth]{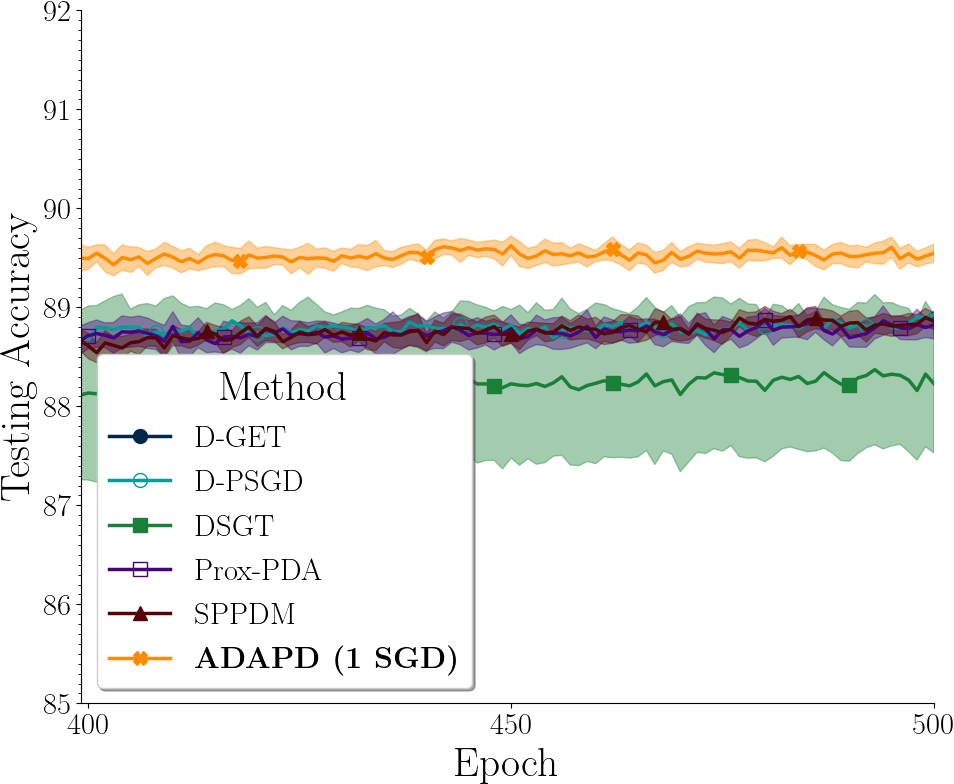}

	\end{tabular}
		
		\caption{In order from left to right: stationarity violation, training loss, testing accuracy, and a zoomed version of the testing accuracy over the last hundred epochs for the CIFAR-10 image classification problem.}
		
		\label{fig:cifar_results}
\end{figure*}

\begin{table*}[h]
		\begin{center}
		{\small
			\begin{tabular}{lcccc}
			\toprule
			\multicolumn{1}{c}{\multirow{2}{*}{Method}} & \multicolumn{3}{c}{ To reach 88\% testing accuracy}  & \multirow{2}{*}{Highest accuracy (\%)}\\
			\cmidrule{2-4}
			& Time (s) & Samples & Communications &\\
			\midrule
			D-GET & \xmark & 3,125,006 & 35,530 & 84.16 \\
			D-PSGD & \textbf{651.88} & 1,011,200 & 7,900 & 88.92 \\
			DSGT & 1,900.23 & 2,348,800 & 36,700 & 88.37 \\
			Prox-PDA & 1,025.57 & 1,523,200 & 11,900 & 88.88 \\
			SPPDM & 1,395.84 & 1,708,800 & 13,350 & 88.91 \\
			ADAPD (1 SGD) & 870.11 & \textbf{806,400} & \textbf{6,300} & \textbf{89.62} \\
			\bottomrule
			\end{tabular}}
		\end{center}
		\caption{Time to reach 88\% testing accuracy on the CIFAR-10 image classification problem. Final column represents highest overall testing accuracy. Bold items indicate the best value.}
		\label{table:time_comparison_cifar}
\end{table*}

Similar to the MNIST image classification problem, we report the wall-clock time taken to reach and stay above 88\% testing accuracy in Table~\ref{table:time_comparison_cifar}. In terms of stationarity, all methods besides D-GET struggle. However, ADAPD performs better than the competitors in terms of testing accuracy (see Figure~\ref{fig:cifar_results} and Table~\ref{table:time_comparison_cifar}). Similar to the MNIST results, this suggests that ADAPD is able to find a solution to the image classification problem that generalizes better than the competitors.  Additionally, ADAPD greatly saves on the number of data samples and communications necessary to achieve a high testing accuracy.

\section{Conclusion}\label{sec:conclusion}

In this work, we presented ADAPD: \textbf{A} \textbf{D}ecentr\textbf{A}lized \textbf{P}rimal-\textbf{D}ual framework for solving non-convex and smooth consensus optimization problems over a network of agents. Two variants to ADAPD are presented, the ADAPD-OG (\textbf{O}ne \textbf{G}radient) and the ADAPD-MC (\textbf{M}ultiple \textbf{C}ommunications). We demonstrated that ADAPD and ADAPD-OG achieves $\bigO{L(1-\rho)^{-2}\varepsilon^{-1}}$ communication complexity to find an $\varepsilon$-stationary point and showed this can be reduced to $\bigO{L(1-\rho)^{-0.5}\varepsilon^{-1}}$ when the MC variant is used; this is optimal for the class of smooth, non-convex, decentralized consensus problems considered in this work. Finally, we presented four numerical experiments that validate our claim that ADAPD outperforms other state-of-the-art decentralized methods. Future research topics would be extending the theoretical guarantees of  ADAPD-OG to the stochastic case and demonstrating convergence in a time-varying/asynchronous setting of ADAPD and its variants.

{\small
\bibliographystyle{hieeetr}
\bibliography{sources.bib}
}

\begin{appendices}

\section{Supporting Lemmas and proofs for the Chebyshev acceleration}\label{appendix:cheby}

\printProofs[cheby]

\section{On the equivalence between Prox-PDA~\cite{hong17} and distributed ADMM~\cite{shi14}}\label{appendix:prox}

Here, we show that the distributed ADMM algorithm~\cite{shi14}, which uses edge-based constraints to enforce consensus (see (3) in~\cite{shi14}), can reduce to the Prox-PDA method in \cite{hong17}. Under the assumption of $\vec{A}^\top \vec{A}=\vec{L}^-$ (the signed Laplacian of $\graph$), Prox-PDA performs global updates:
    \begin{equation}\label{hong17:proxpda}
        \begin{split}
            \x^{(k+1)}=&\argmin_{\x}\left\{F(\x)-\ip{\boldsymbol{\mu}^{(k)},\vec{A}\x}+\frac{\beta}{2}H(\x,\x^{k};\vec{A},\vec{B})\right\},\\
            \boldsymbol{\mu}^{(k+1)}=&\boldsymbol{\mu}^{(k)}+\beta \vec{A}\x^{k+1},
        \end{split}
    \end{equation}
where $H(\x,\x^k;\vec{A},\vec{B})=\fronorm{\vec{A}\x}+\norm{\x-\x^k}_{\vec{B}^\top\vec{B}}^2$. Choosing $\vec{B}^\top\vec{B}=\vec{L}^+$ (the unsigned Laplacian of $\graph$), we have
    \begin{align}\label{proxpda_expansion}
        &H(\x,\x^k;\vec{A},\vec{B})\nonumber\\
        =&\frac{\beta}{2}\sum_{i=1}^N\left(2\abs{\mathcal{N}_i}\norm{\blx_i}_2^2-\ip{ \blx_i,\sum_{j\in\mathcal{N}_i}\blx_j}\right)+\frac{\beta}{2}\ip{\blx^{k},\vec{B}^\top \vec{B}\x^{k}}\nonumber\\
        &+\frac{\beta}{2}\sum_{i=1}^N\left(\ip{ \blx_i,\sum_{j\in\mathcal{N}_i}\blx_j}-2\abs{\mathcal{N}_i}\ip{\blx_i,\blx_i^{k}}-2\ip{\blx_i,\sum_{j\in\mathcal{N}_i}\blx_j^{k}}\right).\nonumber\\
    \end{align}
Multiplying both sides of the $\boldsymbol{\mu}$ update in~\eqref{hong17:proxpda} by $\vec{A}^\top$, letting $\boldsymbol{\alpha}^{k}\triangleq \vec{A}^\top \boldsymbol{\mu}^{k}\,\forall\,k$, and dropping the $\frac{\beta}{2}\ip{\blx^{k},\vec{B}^\top \vec{B}\x^{k}}$ term from~\eqref{proxpda_expansion} (as the $\argmin$ is about $\x$ in~\eqref{hong17:proxpda}) results in (10) from~\cite{shi14}. Hence, the two algorithms are equivalent. As a result, the distributed ADMM updates from~\cite{shi14} converge in the non-convex case by the convergence of Prox-PDA~\cite{hong17}.

 \section{Supporting Lemmas and proofs for ADAPD}\label{appendix:algo}

 \printProofs[dualbound]
 
 The next three Lemmas serve as building blocks to prove Lemma~\ref{lemma:al_decrease}.

\begin{lemma}\label{lemma:x_decrease}
If (\ref{x_optimality}) is satisfied and $\eta<\textfrac{1}{2L}$, then
{\small
	\begin{equation}\label{lemma:x_decrease:bound}
		\begin{split}
			&\L_\eta(\x^{k+1},\x_0^{k};\y^k,\z^k)-\L_\eta(\x^{k},\x_0^{k};\y^k,\z^k)\\
			&\le\textfrac{2L\eta-1}{2\eta}\fronorm{\x^{k+1}-\x^k}+\textfrac{\epsilon_{k+1}}{2L}, \forall\, k\ge0.
		\end{split}
	\end{equation}
}
\end{lemma}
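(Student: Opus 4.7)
The plan is to observe that, with $(\x_0^k, \y^k, \z^k)$ held fixed, the augmented Lagrangian $\L_\eta(\cdot, \x_0^k; \y^k, \z^k)$ is, up to an $\x$-independent constant, a $\big(\tfrac{1}{\eta}-L\big)$-strongly convex function of $\x$. The inexact optimality condition \eqref{x_optimality} then says exactly that the gradient of this restricted Lagrangian at $\x^{k+1}$ is the residual $\vec{R}^{k+1}$, whose squared Frobenius norm is controlled by $\epsilon_{k+1}$. Applying the strong-convexity inequality at $(\x^{k+1}, \x^k)$ and Young's inequality on the resulting inner-product term produces \eqref{lemma:x_decrease:bound} directly.

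First I would introduce $\phi(\x) \defined F(\x) + \ip{\y^k, \x-\x_0^k} + \tfrac{1}{2\eta}\fronorm{\x-\x_0^k}$ and note that the remaining terms of $\L_\eta(\x,\x_0^k;\y^k,\z^k)$ do not depend on $\x$, so
\[
\L_\eta(\x^{k+1},\x_0^k;\y^k,\z^k) - \L_\eta(\x^k,\x_0^k;\y^k,\z^k) = \phi(\x^{k+1}) - \phi(\x^k).
\]
Since $F$ is $L$-smooth by Assumption~\ref{assumption:objective_function}(i), the function $F + \tfrac{L}{2}\fronorm{\cdot}$ is convex; combined with the $\tfrac{1}{\eta}$-strongly convex quadratic $\tfrac{1}{2\eta}\fronorm{\x-\x_0^k}$, this makes $\phi$ strongly convex with modulus $\mu = \tfrac{1}{\eta} - L > 0$ under $\eta < \tfrac{1}{L}$. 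Hence
\[
\phi(\x^{k+1}) - \phi(\x^k) \le \ip{\nabla\phi(\x^{k+1}),\x^{k+1}-\x^k} - \tfrac{\mu}{2}\fronorm{\x^{k+1}-\x^k}.
\]
By \eqref{x_optimality}, $\nabla\phi(\x^{k+1}) = \vec{R}^{k+1}$ block-wise, and summing the per-agent bounds gives $\fronorm{\vec{R}^{k+1}} = \sum_{i=1}^N \norm{\vec{r}_i^{k+1}}_2^2 \le \epsilon_{k+1}$.

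Then I would apply the Peter--Paul inequality \eqref{fact:peterpaulinequality} with $\delta = L$ to split the inner product:
\[
\ip{\vec{R}^{k+1}, \x^{k+1}-\x^k} \le \tfrac{1}{2L}\fronorm{\vec{R}^{k+1}} + \tfrac{L}{2}\fronorm{\x^{k+1}-\x^k} \le \tfrac{\epsilon_{k+1}}{2L} + \tfrac{L}{2}\fronorm{\x^{k+1}-\x^k}.
\]
Substituting into the strong-convexity estimate and collecting the $\fronorm{\x^{k+1}-\x^k}$ coefficients yields $\tfrac{L-\mu}{2} = \tfrac{L}{2} - \tfrac{1}{2\eta} + \tfrac{L}{2} = \tfrac{2L\eta-1}{2\eta}$, which is exactly the right-hand side of \eqref{lemma:x_decrease:bound}.

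No step looks genuinely difficult: the main observation to pin down is that the inexactness parameter in \eqref{x_optimality} is tailored so that $\vec{R}^{k+1}$ coincides with the full gradient of $\phi$ at $\x^{k+1}$, turning approximate stationarity of the local subproblem into a clean gradient bound. The hypothesis $\eta < \tfrac{1}{2L}$ in the statement is stronger than the $\eta < \tfrac{1}{L}$ actually required to make $\mu > 0$; the stronger bound is only needed downstream (for the coefficient $\tfrac{2L\eta-1}{2\eta}$ to be negative so that the Lyapunov argument can exploit a genuine decrease), so the lemma as written remains valid under the weaker condition and a fortiori under $\eta < \tfrac{1}{2L}$.
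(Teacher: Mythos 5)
Your proof is correct and follows essentially the same route as the paper: the paper derives the identical intermediate bound $\ip{\vec{R}^{k+1},\x^{k+1}-\x^k}+\big(\tfrac{L}{2}-\tfrac{1}{2\eta}\big)\fronorm{\x^{k+1}-\x^k}$ directly from the $L$-smoothness inequality at $\x^{k+1}$ plus expanding the quadratic penalty, which is exactly what your $\big(\tfrac{1}{\eta}-L\big)$-strong-convexity inequality for $\phi$ unpacks to, and then applies the same Peter--Paul split with weights $\tfrac{1}{2L}$ and $\tfrac{L}{2}$ (in the paper's convention \eqref{fact:peterpaulinequality} this corresponds to $\delta=\tfrac{1}{L}$, not $\delta=L$, a purely notational point). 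Your closing remark is also consistent with the paper's own proof, which never uses $\eta<\tfrac{1}{2L}$ beyond making the algebra go through; the stronger restriction is only needed later for the Lyapunov decrease.
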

\begin{proof} By (\ref{assumption:smoothness}), we have
{\small
	\begin{align*}
	&-F(\x^k)\\
	&\le-F(\x^{k+1})+\ip{-\nabla F(\x^{k+1}),\x^k-\x^{k+1}}+\textfrac{L}{2}\fronorm{\x^{k+1}-\x^k}.
	\end{align*}
	}
	Hence,
	{\small
	\begin{align*}
	&\L_\eta(\x^{k+1},\x_0^{k};\y^k,\z^k)-\L_\eta(\x^{k},\x_0^{k};\y^k,\z^k)\\
	&\le\ip{\nabla F(\x^{k+1}),\x^{k+1}-\x^k}-\textfrac{1}{2\eta}\fronorm{\x^k-\x_0^k}\\
		&\quad+\textfrac{L}{2}\fronorm{\x^{k+1}-\x^k}+\ip{\y^k,\x^{k+1}-\x^k}+\textfrac{1}{2\eta}\fronorm{\x^{k+1}-\x_0^k}\\
	&=\ip{\nabla F(\x^{k+1})+\y^k+\textfrac{1}{\eta}(\x^{k+1}-\x_0^k),\x^{k+1}-\x^k}\\
	&\quad+\textfrac{L}{2}\fronorm{\x^{k+1}-\x^k}-\textfrac{1}{2\eta}\fronorm{\x^{k+1}-\x^k}\\
	&\stack{(\ref{fact:peterpaulinequality})}{\le}\textfrac{1}{2L}\fronorm{\nabla F(\x^{k+1})+\y^k+\textfrac{1}{\eta}(\x^{k+1}-\x_0^k)}\\
		&\quad+\textfrac{2L\eta-1}{2\eta}\fronorm{\x^{k+1}-\x^k}\\
	&\le\textfrac{2L\eta-1}{2\eta}\fronorm{\x^{k+1}-\x^k}+\textfrac{\epsilon_{k+1}}{2L}
	\end{align*}
	}
	where in the last inequality, we have used $\fronorm{\nabla F(\x^{k+1})+\y^k+\textfrac{1}{\eta}(\x^{k+1}-\x_0^k)}\le\epsilon_{k+1}.$ 
\end{proof}

\begin{lemma}\label{lemma:x0_decrease}
	The partial gradient $\nabla_{\x_0}\L_\eta(\x,\x_0;\y,\z)$ is $\textfrac{3}{\eta}$-Lipschitz continuous about $\x_0$ for any $(\x,\y,\z)$. Further, for all $k\ge0,$ we have
	{\small
	\begin{equation}\label{lemma:x0_decrease:bound}
		\begin{split}
		&\L_\eta(\x^{k+1},\x_0^{k+1};\y^k,\z^k)-\L_\eta(\x^{k+1},\x_0^{k};\y^k,\z^k)\\
		&\le-\textfrac{1}{2\eta}\fronorm{\x_0^{k+1}-\x_0^k}.
		\end{split}
	\end{equation}
	}
\end{lemma}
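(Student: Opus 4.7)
The plan is to establish the Lipschitz claim by a direct Hessian computation and then reinterpret the $\x_0$-update \eqref{x0_update} as a single gradient-descent step, so that \eqref{lemma:x0_decrease:bound} follows from the standard descent lemma with a cancellation that produces exactly $-\frac{1}{2\eta}$.

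First I would compute the partial gradient directly from \eqref{augmentedLagrangian}:
$\nabla_{\x_0}\L_\eta(\x,\x_0;\y,\z) = -\y - \frac{1}{\eta}(\x-\x_0) + \sqrt{\identity-\W}\z + \frac{1}{\eta}(\identity-\W)\x_0$.
Grouping the $\x_0$-dependent terms shows the linear part is $\frac{1}{\eta}(2\identity-\W)\x_0$, so the Hessian (acting on the rows of $\x_0 \in \R^{N\times p}$) is the constant operator $\frac{1}{\eta}(2\identity-\W)$, independent of $(\x,\y,\z)$. By Assumption~\ref{assumption:mixing_matrix}(iv) the eigenvalues of $\W$ lie in $(-1,1]$, so those of $2\identity-\W$ lie in $[1,3)$, and the operator norm is bounded by $\frac{3}{\eta}$. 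Therefore $\nabla_{\x_0}\L_\eta$ is $\frac{3}{\eta}$-Lipschitz in $\x_0$ for any fixed $(\x,\y,\z)$.

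Next I would rewrite \eqref{x0_update} in gradient-step form. Starting from $\x_0^{k+1} = \frac{1}{2}(\W\x_0^k + \x^{k+1} + \eta(\y^k - \tilde{\z}^k))$, subtracting $\x_0^k$ and multiplying by $\frac{2}{\eta}$ yields $\frac{2}{\eta}(\x_0^{k+1}-\x_0^k) = -\frac{1}{\eta}(2\identity-\W)\x_0^k + \frac{1}{\eta}\x^{k+1} + \y^k - \tilde{\z}^k$. Recalling $\tilde{\z}^k = \sqrt{\identity-\W}\z^k$, the right-hand side is exactly $-\nabla_{\x_0}\L_\eta(\x^{k+1},\x_0^k;\y^k,\z^k)$. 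Hence $\x_0^{k+1} - \x_0^k = -\frac{\eta}{2}\nabla_{\x_0}\L_\eta(\x^{k+1},\x_0^k;\y^k,\z^k)$; that is, the $\x_0$-update is an \emph{exact} gradient-descent step with step size $\eta/2$.

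Finally, invoking the standard descent lemma for functions with $\frac{3}{\eta}$-Lipschitz gradient gives $\L_\eta(\x^{k+1},\x_0^{k+1};\y^k,\z^k) - \L_\eta(\x^{k+1},\x_0^k;\y^k,\z^k) \le \ip{\nabla_{\x_0}\L_\eta(\x^{k+1},\x_0^k;\y^k,\z^k),\x_0^{k+1}-\x_0^k} + \frac{3}{2\eta}\fronorm{\x_0^{k+1}-\x_0^k}$. Substituting the gradient-step identity turns the inner product into $-\frac{2}{\eta}\fronorm{\x_0^{k+1}-\x_0^k}$, and combining with the $\frac{3}{2\eta}$ quadratic term produces the stated coefficient $-\frac{1}{2\eta}$. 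I do not foresee a serious obstacle: unlike the $\x$-subproblem there is no inexactness error term to carry, so the only subtle step is recognizing \eqref{x0_update} as an exact gradient step, after which the conclusion is a one-line application of the descent lemma.
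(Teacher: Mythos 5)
Your proposal is correct and follows essentially the same route as the paper's proof: both establish the $\textfrac{3}{\eta}$-Lipschitz continuity from the constant linear operator $\textfrac{1}{\eta}(2\identity-\W)$ via Assumption~\ref{assumption:mixing_matrix}(iv), and both apply the descent lemma while using \eqref{x0_update} to identify the inner product as $-\textfrac{2}{\eta}\fronorm{\x_0^{k+1}-\x_0^k}$, yielding the coefficient $\textfrac{3}{2\eta}-\textfrac{2}{\eta}=-\textfrac{1}{2\eta}$. Your explicit recasting of \eqref{x0_update} as an exact gradient step with step size $\eta/2$ is just a slightly more spelled-out version of the same identification the paper makes.
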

\begin{proof} Compute
	{\small
	\begin{align*}
	&\norm{\nabla_{\x_0}\L_\eta(\x,\x_0;\y,\z)-\nabla_{\x_0}\L_\eta(\x,\x_0';\y,\z)}_F\\
	&=\norm{\textfrac{1}{\eta}(2\identity-\W)[\x_0-\x_0']}_F\le\textfrac{3}{\eta}\norm{\x_0-\x_0'}_F,
	\end{align*}
	}
	where we have used the compatibility of the 2-norm and the Frobenius norm and Assumption~\ref{assumption:mixing_matrix}(iv). Hence, it holds
	{\small
	\begin{align*}
	&\L_\eta(\x^{k+1},\x_0^{k+1};\y^k,\z^k)\\
	&\le\L_\eta(\x^{k+1},\x_0^{k};\y^k,\z^k)+\textfrac{3}{2\eta}\fronorm{\x_0^{k+1}-\x_0^k}\\
		&\quad+\ip{\nabla_{\x_0}\L_\eta(\x^{k+1},\x_0^{k};\y^k,\z^k),\x_0^{k+1}-\x_0^k}\\
	&\stackrel{(\ref{x0_update})}{=}\L_\eta(\x^{k+1},\x_0^{k};\y^k,\z^k)+(\textfrac{3}{2\eta}-\textfrac{2}{\eta})\fronorm{\x_0^{k+1}-\x_0^k}.
	\end{align*}
	}
	Rearranging terms gives the desired result.
\end{proof}

\begin{lemma}\label{lemma:dual_decrease}
	For all $k\ge0,$ the followings hold:
		\begin{align}
			\small&\L_\eta(\x^{k+1},\x_0^{k+1};\y^{k+1},\z^k)-\L_\eta(\x^{k+1},\x_0^{k+1};\y^k,\z^k)\nonumber\\
			&=\eta\fronorm{\y^{k+1}-\y^k},\label{lemma:dual_decrease:y_bound}\\
			&\L_\eta(\x^{k+1},\x_0^{k+1};\y^{k+1},\z^{k+1})-\L_\eta(\x^{k+1},\x_0^{k+1};\y^{k+1},\z^k)\nonumber\\
			&=\eta\fronorm{\z^{k+1}-\z^k}.\label{lemma:dual_decrease:z_bound}
		\end{align}
\end{lemma}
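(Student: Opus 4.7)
The plan is to observe that each equality compares two evaluations of the augmented Lagrangian $\L_\eta$ in which \emph{only one} argument has changed, so most terms in the definition \eqref{augmentedLagrangian} cancel out, and only the inner-product term involving the updated dual variable survives.

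For \eqref{lemma:dual_decrease:y_bound}, the primal iterates $\x^{k+1}, \x_0^{k+1}$ and the dual variable $\z^k$ are held fixed while $\y$ moves from $\y^k$ to $\y^{k+1}$. Looking at \eqref{augmentedLagrangian}, the only piece depending on $\y$ is $\ip{\y, \x - \x_0}$ (the quadratic penalty $\frac{1}{2\eta}\fronorm{\x-\x_0}$ and all $\z$-terms are unaffected). Hence the difference equals $\ip{\y^{k+1}-\y^k,\, \x^{k+1}-\x_0^{k+1}}$. Then I invoke the $\y$-update rule \eqref{y_update}, which rewrites as $\x^{k+1}-\x_0^{k+1} = \eta(\y^{k+1}-\y^k)$, giving exactly $\eta \fronorm{\y^{k+1}-\y^k}$.

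For \eqref{lemma:dual_decrease:z_bound}, the same strategy applies: with $(\x^{k+1}, \x_0^{k+1}, \y^{k+1})$ held fixed, only $\z$ changes. The only $\z$-dependent term in \eqref{augmentedLagrangian} is $\ip{\z,\, \sqrt{\identity-\W}\x_0}$, so the difference reduces to $\ip{\z^{k+1}-\z^k,\, \sqrt{\identity-\W}\x_0^{k+1}}$. Here I would be a bit careful and substitute using the original classical $\z$-update \eqref{z_update_classic} with $\beta_2 = \frac{1}{\eta}$, namely $\z^{k+1} = \z^k + \frac{1}{\eta}\sqrt{\identity-\W}\x_0^{k+1}$, which is equivalent to the decentralized form \eqref{z_update} under the invariant $\z^k \in \mathrm{range}(\sqrt{\identity-\W})$ noted just before \eqref{z_update_pre}. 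This gives $\sqrt{\identity-\W}\x_0^{k+1} = \eta(\z^{k+1}-\z^k)$, and the inner product collapses to $\eta \fronorm{\z^{k+1}-\z^k}$, which is the claim.

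There is no real obstacle here; both identities are direct computations once one isolates the single changing term in $\L_\eta$ and plugs in the corresponding dual-variable update. The only subtlety worth flagging is that the second identity is most cleanly derived from the classical form of the $\z$-update rather than the $\tilde{\z}$-form actually implemented in Alg.~\ref{algo:}, which is why one must remember to work with the $\z$-sequence lying in $\mathrm{range}(\sqrt{\identity-\W})$ uniquely associated with $\{\tilde{\z}^k\}$.
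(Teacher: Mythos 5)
Your proposal is correct and matches the paper's own proof: in each case the difference of Lagrangian values reduces to the single inner-product term involving the updated dual variable, and substituting the dual update ($\x^{k+1}-\x_0^{k+1}=\eta(\y^{k+1}-\y^k)$, resp.\ $\sqrt{\identity-\W}\x_0^{k+1}=\eta(\z^{k+1}-\z^k)$) yields the stated equalities. Your remark about working with the $\z$-sequence in $\mathrm{range}(\sqrt{\identity-\W})$ associated with $\{\tilde{\z}^k\}$ is exactly the convention the paper adopts just after Alg.~\ref{algo:}, so there is no substantive difference.
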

\begin{proof} By the $\y$ update (\ref{y_update}), we have
{\small
	\begin{align*}
		&\L_\eta(\x^{k+1},\x_0^{k+1};\y^{k+1},\z^k)-\L_\eta(\x^{k+1},\x_0^{k+1};\y^k,\z^k)\\
		&=\ip{\y^{k+1}-\y^k,\x^{k+1}-\x^{k+1}_0}\\
		&=\ip{\y^{k+1}-\y^k,\eta(\y^{k+1}-\y^k)}\\
		&=\eta\fronorm{\y^{k+1}-\y^k}
	\end{align*}
}
	for all $k\ge0.$ Hence, (\ref{lemma:dual_decrease:y_bound}) holds.
	Similarly, by the $\z$ update (\ref{z_update}), we have
	{\small
		\begin{align*}
			\small&\L_\eta(\x^{k+1},\x_0^{k+1};\y^{k+1},\z^{k+1})-\L_\eta(\x^{k+1},\x_0^{k+1};\y^{k+1},\z^k)\\
			&=\ip{\z^{k+1}-\z^k,\sqrt{\identity-\W}\x_0^{k+1}}\\
			&=\ip{\z^{k+1}-\z^k,\eta(\z^{k+1}-\z^k)}\\
			&=\eta\fronorm{\z^{k+1}-\z^k}
		\end{align*}
	}
		for all $k\ge0.$ Thus (\ref{lemma:dual_decrease:z_bound}) holds, and we complete the proof. 
\end{proof}

\printProofs[buildLyapunov]

\printProofs[inexactproofs]

\section{Supporting Lemmas and proofs for ADAPD-OG}\label{appendix:single_step}
We begin the convergence analysis with showing the change in the augmented Lagrangian function value between two consecutive iterations.

\begin{lemma}\label{lemma:ss:x_update}
		Provided that $\eta<\textfrac{1}{L}$, we have
		{\small
		\begin{equation}\label{lemma:ss:x_update:bound}
			\begin{split}
				&\L_\eta(\x^{k+1},\x_0^{k};\y^k,\z^k)-\L_\eta(\x^{k},\x_0^{k};\y^k,\z^k)\\
				&\le\textfrac{L\eta-1}{2\eta}\fronorm{\x^{k+1}-\x^k}
			\end{split}
		\end{equation}
		}for all $k\ge0.$
\end{lemma}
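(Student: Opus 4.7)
The plan is to decompose the change in the augmented Lagrangian into the change in $F$ and the change in the $\x$-dependent quadratic, handle each separately, and recombine using the explicit form of the ADAPD-OG update. Define $h(\x)\triangleq\ip{\y^k,\x-\x_0^k}+\frac{1}{2\eta}\fronorm{\x-\x_0^k}$, so that $\L_\eta(\x,\x_0^k;\y^k,\z^k)=F(\x)+h(\x)$ up to terms independent of $\x$; the left-hand side of \eqref{lemma:ss:x_update:bound} is then $[F(\x^{k+1})-F(\x^k)]+[h(\x^{k+1})-h(\x^k)]$.

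First I would bound $F(\x^{k+1})-F(\x^k)$ above by the standard descent inequality from $L$-smoothness (Assumption~\ref{assumption:objective_function}(i)), which gives $\ip{\nabla F(\x^k),\x^{k+1}-\x^k}+\frac{L}{2}\fronorm{\x^{k+1}-\x^k}$. Next, because $h$ is an exact quadratic with constant Hessian $\frac{1}{\eta}\identity$, Taylor expansion is an equality: $h(\x^{k+1})-h(\x^k)=\ip{\nabla h(\x^k),\x^{k+1}-\x^k}+\frac{1}{2\eta}\fronorm{\x^{k+1}-\x^k}$, where $\nabla h(\x^k)=\y^k+\frac{1}{\eta}(\x^k-\x_0^k)$. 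Summing these two estimates yields a bound whose leading term is $\ip{\nabla F(\x^k)+\nabla h(\x^k),\x^{k+1}-\x^k}+\frac{L\eta+1}{2\eta}\fronorm{\x^{k+1}-\x^k}$.

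The key simplification, and the only place the gradient-step update enters, is to rewrite the inner product using \eqref{x_single_step_update}: rearranging the update gives $\nabla F(\x^k)+\y^k=-\frac{1}{\eta}(\x^{k+1}-\x_0^k)$, hence $\nabla F(\x^k)+\nabla h(\x^k)=-\frac{1}{\eta}(\x^{k+1}-\x^k)$, so the cross term collapses exactly to $-\frac{1}{\eta}\fronorm{\x^{k+1}-\x^k}$. Combining coefficients, $-\frac{1}{\eta}+\frac{L}{2}+\frac{1}{2\eta}=\frac{L\eta-1}{2\eta}$, which matches \eqref{lemma:ss:x_update:bound}. There is no real obstacle here; the only conceptual point is noticing that the forward-gradient step is engineered precisely so that the first-order term becomes a negative quadratic in $\x^{k+1}-\x^k$, and the assumption $\eta<\frac{1}{L}$ merely ensures the resulting coefficient is nonpositive so the step genuinely decreases the augmented Lagrangian.
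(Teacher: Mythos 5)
Your proof is correct and follows essentially the same route as the paper's: a descent inequality from $L$-smoothness for $F$, exact treatment of the quadratic terms, and the update \eqref{x_single_step_update} to collapse the linear term, yielding the coefficient $\textfrac{L\eta-1}{2\eta}$. The only cosmetic difference is that you expand the quadratic around $\x^k$ (so the cross term becomes $-\textfrac{1}{\eta}\fronorm{\x^{k+1}-\x^k}$) while the paper anchors it at $\x^{k+1}$ (so the cross term vanishes exactly); the arithmetic is identical.
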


\begin{proof}
	By (\ref{assumption:smoothness}), we have
	{\small
		\begin{align*}
			&F(\x^{k+1})\\
			&\le F(\x^k)+\ip{\nabla F(\x^k),\x^{k+1}-\x^k}+\textfrac{L}{2}\fronorm{\x^{k+1}-\x^k}.
		\end{align*}
	}Thus,
	{\small
		\begin{align*}
			&\L_\eta(\x^{k+1},\x_0^{k};\y^k,\z^k)-\L_\eta(\x^{k},\x_0^{k};\y^k,\z^k)\\
			&\le\ip{\nabla F(\x^{k}),\x^{k+1}-\x^k}+\textfrac{L}{2}\fronorm{\x^{k+1}-\x^k}+\ip{\y^k,\x^{k+1}-\x^k}\\
				&+\textfrac{1}{2\eta}\fronorm{\x^{k+1}-\x_0^k}-\textfrac{1}{2\eta}\fronorm{\x^k-\x_0^k}\\
			&=\ip{\nabla F(\x^k)+\y^k+\textfrac{1}{\eta}(\x^{k+1}-\x_0^k),\x^{k+1}-\x^k}\\
				&+\textfrac{L\eta-1}{2\eta}\fronorm{\x^{k+1}-\x^k}\\
			&\stack{(\ref{x_single_step_update})}{=}\textfrac{L\eta-1}{2\eta}\fronorm{\x^{k+1}-\x^k}.
		\end{align*}
}\end{proof}

\begin{theoremEnd}[normal]{lemma}\label{lemma:ss:al_decrease}
	Let $\{(\x^{k},\x_0^{k};\y^{k},\z^{k})\}$ be obtained from Alg.~\ref{algo:single_step} or equivalently by updates \eqref{x_single_step_update} and \eqref{x0_update}-\eqref{z_update}. If $\eta<\textfrac{1}{L}$, then it holds for all $k\ge0$,
	{\small
		\begin{equation}\label{lemma:ss:al_decrease:bound}
			\begin{split}
				&\L_\eta(\x^{k+1},\x_0^{k+1};\y^{k+1},\z^{k+1})-\L_\eta(\x^{k},\x_0^{k};\y^{k},\z^{k})\\&\le\textfrac{L\eta-1}{2\eta}\fronorm{\x^{k+1}-\x^k}-\textfrac{1}{2\eta}\fronorm{\x_0^{k+1}-\x_0^k}\\
				&+\eta\fronorm{\y^{k+1}-\y^k}+\eta\fronorm{\z^{k+1}-\z^k}.
			\end{split}
		\end{equation}}
\end{theoremEnd}

\begin{proofEnd}\enskip
	The proof follows from rewriting $\L_\eta(\x^{k+1},\x_0^{k+1};\y^{k+1},\z^{k+1})-\L_\eta(\x^{k},\x_0^{k};\y^{k},\z^{k})$ as
	{\small
		\begin{align*}
			&\L_\eta(\x^{k+1},\x_0^{k+1};\y^{k+1},\z^{k+1})-\L_\eta(\x^{k},\x_0^{k};\y^{k},\z^{k})\\
			&=\L_{\eta}(\x^{k+1},\x_0^{k+1};\y^{k+1},\z^{k+1})-\L_{\eta}(\x^{k+1},\x_0^{k+1};\y^{k+1},\z^k)\\
				&+\L_{\eta}(\x^{k+1},\x_0^{k+1};\y^{k+1},\z^k)-\L_{\eta}(\x^{k+1},\x_0^{k+1};\y^{k},\z^k)\\
				&+\L_{\eta}(\x^{k+1},\x_0^{k+1};\y^{k},\z^k)-\L_{\eta}(\x^{k+1},\x_0^{k};\y^{k},\z^k)\\
				&+\L_{\eta}(\x^{k+1},\x_0^k;\y^k,\z^k)-\L_{\eta}(\x^k,\x_0^k;\y^k,\z^k)
		\end{align*}
	}and using (\ref{lemma:ss:x_update:bound}), (\ref{lemma:x0_decrease:bound}), (\ref{lemma:dual_decrease:y_bound}), and (\ref{lemma:dual_decrease:z_bound}) to bound each of the terms on the right hand side of the equality.
\end{proofEnd}

\begin{theoremEnd}[normal]{lemma}\label{lemma:ss:dual_var_bound}
	Under the assumptions of Lemma~\ref{lemma:ss:al_decrease}, it holds that for all $k\ge0$,
		{\small
		\begin{align}\label{lemma:ss:dual_var_bound:y_bound}
			\eta\fronorm{\y^{k+1}-\y^k}&\le2L^2\eta\fronorm{\x^k-\x^{k-1}}+\textfrac{2}{\eta}\fronorm{\vec{V}_0^k}, \\
		\label{lemma:ss:dual_var_bound:z_bound}
			\eta\fronorm{\z^{k+1}-\z^k}&\le\textfrac{4L^2\eta}{(1-\rho)}\fronorm{\x^k-\x^{k-1}}+\textfrac{6}{(1-\rho)\eta}\fronorm{\vec{V}_0^k},
		\end{align}
		}where $\vec{V}_0^k$ is defined in (\ref{v_vector}).
\end{theoremEnd}

\begin{proofEnd}\enskip
	To prove (\ref{lemma:ss:dual_var_bound:y_bound}), we have from (\ref{ss:dual_variable_relation:y_bound}) that
	{\small
		\begin{align*}
			\eta\fronorm{\y^{k+1}-\y^k}&=\eta\fronorm{-\nabla F(\x^k)+\nabla F(\x^{k-1})-\textfrac{1}{\eta}\vec{V}_0^k}\\
			&\stack{(\ref{fact:youngsinequality}),(\ref{assumption:smoothness})}{\le}2L^2\eta\fronorm{\x^k-\x^{k-1}}+\textfrac{2}{\eta}\fronorm{\vec{V}_0^k}.
		\end{align*}
	}To prove (\ref{lemma:ss:dual_var_bound:z_bound}), we start from \eqref{eq:bd-range-Z-mid} to have
	{\small
		\begin{align}\label{eq:bd-range-Z-OG}
						&\eta\fronorm{\sqrt{\identity-\W}\big(\z^{k+1}-\z^k\big)}\cr
						&\le2\eta\fronorm{\y^{k+1}-\y^k}+\textfrac{2}{\eta}\fronorm{\W\vec{V}_0^k}\cr
						&\stack{(\ref{lemma:ss:dual_var_bound:y_bound})}{\le}4L^2\eta\fronorm{\x^k-\x^{k-1}}+\textfrac{4}{\eta}\fronorm{\vec{V}_0^k}+\textfrac{2}{\eta}\fronorm{\W\vec{V}_0^k}\cr
						&\le4L^2\eta\fronorm{\x^k-\x^{k-1}}+\textfrac{6}{\eta}\fronorm{\vec{V}_0^k},
					\end{align}
	}where the last inequality uses Assumption~\ref{assumption:mixing_matrix}(iv). 	
	Now we notice that (\ref{lemma:dual_variable_relation:z_bound}) still holds for ADAPD-OG. Hence by choosing $\z^0\in\mathrm{range}\big(\sqrt{\identity-\W}\big),$ we have	 $\z^k\in\mathrm{range}\big(\sqrt{\identity-\W}\big)$ for all $k\ge0$ from \eqref{z_update_classic}. Thus \eqref{eq:bd-range-Z} still holds, and it together with \eqref{eq:bd-range-Z-OG} implies (\ref{lemma:ss:dual_var_bound:z_bound}).			
\end{proofEnd}

\begin{theoremEnd}[normal]{lemma}\label{lemma:ss:supporing_inequality}
	For all $k\ge0,$ the following relation holds
	{\small
		\begin{equation}\label{lemma:ss:supporting_inequality:bound}
			\begin{split}
			&\textfrac{1}{2\eta}\left(\fronorm{\sqrt{\identity-\W}\x_0^{k+1}}+\fronorm{\sqrt{\identity-\W}(\x_0^{k+1}-\x_0^k)}-\fronorm{\sqrt{\identity-\W}\x_0^{k}}\right)\\
				&+\textfrac{1}{2\eta}\left(\norm{\vec{V}_0^k}_\W^2+\norm{\x_0^{k+1}-\x_0^k}_\W^2-\norm{\x_0^k-\x_0^{k-1}}_\W^2\right)\\
			&\le\textfrac{L\eta-1}{2\eta}\fronorm{\x_0^{k+1}-\x_0^k}+\textfrac{L}{2}\fronorm{\x^{k}-\x^{k-1}}+\textfrac{1}{2\eta}\fronorm{\x_0^k-\x_0^{k-1}}\\
			&-\textfrac{1}{2\eta}\fronorm{\vec{V}_0^k}
			\end{split}
		\end{equation}
	}where $\vec{V}_0^k$ is defined in (\ref{v_vector}).
\end{theoremEnd}

\begin{proofEnd}\enskip 
	By (\ref{ss:dual_variable_relation:y_bound}), we have
		{\small
		\begin{equation}\label{lemma:ss:supporting_inequality:eqn1}
		\begin{split}
			&\ip{\y^{k+1}-\y^k,\x_0^{k+1}-\x_0^{k}}\\
			&=\ip{-\nabla F(\x^{k})+\nabla F(\x^{k-1})-\textfrac{1}{\eta}\vec{V}_0^k,\x_0^{k+1}-\x_0^{k}}.
		\end{split}
		\end{equation}
		}We handle the left and right hand side separately. For the left hand side, we have
	{\small
		\begin{align*}
		&\ip{\y^{k+1}-\y^k,\x_0^{k+1}-\x_0^{k}}\\
		&\stack{(\ref{lemma:supporting_inequality:lhs_bound})}{=}\textfrac{1}{2\eta}\left(\fronorm{\sqrt{\identity-\W}\x_0^{k+1}}+\fronorm{\sqrt{\identity-\W}(\x_0^{k+1}-\x_0^k)}\right)\\
			&-\textfrac{1}{2\eta}\fronorm{\sqrt{\identity-\W}\x_0^{k}}\\
			&+\textfrac{1}{2\eta}\left(\norm{\vec{V}_0^k}_\W^2+\norm{\x_0^{k+1}-\x_0^k}_\W^2-\norm{\x_0^k-\x_0^{k-1}}_\W^2\right)
		\end{align*}
	}For the right hand side, we have
	{\small
		\begin{align*}
			&\ip{-\nabla F(\x^{k})+\nabla F(\x^{k-1})-\textfrac{1}{\eta}\vec{V}_0^k,\x_0^{k+1}-\x_0^{k}}\\
			&\stack{(\ref{fact:peterpaulinequality}),(\ref{assumption:smoothness})}{\le}\enskip\textfrac{L}{2}\left(\fronorm{\x^{k}-\x^{k-1}}+\fronorm{\x_0^{k+1}-\x_0^k}\right)-\textfrac{1}{\eta}\ip{\vec{V}_0^k,\x_0^{k+1}-\x_0^{k}}\\
			&=\textfrac{L}{2}\left(\fronorm{\x^{k}-\x^{k-1}}+\fronorm{\x_0^{k+1}-\x_0^k}\right)\\
				&-\textfrac{1}{2\eta}\left(\fronorm{\vec{V}_0^k}+\fronorm{\x_0^{k+1}-\x_0^k}-\fronorm{\x_0^k-\x_0^{k-1}}\right).
		\end{align*}
	}Combining like terms results in the right hand side of (\ref{lemma:ss:supporting_inequality:bound}); further using the equality established in (\ref{lemma:ss:supporting_inequality:eqn1}) completes the proof.
\end{proofEnd}

\begin{theoremEnd}[normal]{lemma}\label{lemma:ss:second_al_decrease}
	Let $\{(\x^{k},\x_0^{k};\y^{k},\z^{k})\}$ be obtained from Alg.~\ref{algo:single_step} or equivalently by updates \eqref{x_single_step_update}, \eqref{x0_update}-\eqref{z_update}. If $\eta<\textfrac{1}{L}$, then
	{\small
		\begin{equation}\label{lemma:ss:second_al_decrease:bound}
			\begin{split}
				&\L_\eta(\x^{k+1},\x_0^{k+1};\y^{k+1},\z^{k+1})+\textfrac{\hat{C}}{2\eta}\fronorm{\sqrt{\identity-\W}\x_0^{k+1}}\\
				&+\textfrac{\hat{C}}{2\eta}\fronorm{\x_0^{k+1}-\x_0^k}\\
				&\le\L_\eta(\x^{k},\x_0^{k};\y^{k},\z^{k})+\textfrac{\hat{C}}{2\eta}\fronorm{\sqrt{\identity-\W}\x_0^{k}}+\textfrac{\hat{C}}{\eta}\fronorm{\x_0^k-\x_0^{k-1}}\\
				&+\left(\textfrac{L\eta-1}{2\eta}\right)\fronorm{\x^{k+1}-\x^k}+\left(\textfrac{\hat{C}L\eta-\hat{C}-1}{2\eta}\right)\fronorm{\x_0^{k+1}-\x_0^k}\\
				&+\textfrac{4L^2(1-\rho)\eta+8L^2\eta+\hat{C}L(1-\rho)}{2(1-\rho)}\fronorm{\x^k-\x^{k-1}}
			\end{split}
		\end{equation}
	}for all $k\ge0,$ where $\hat{C}$ is a fixed constant that satisfies $\hat{C}\ge\textfrac{12+4(1-\rho)}{(1-\rho)^2}.$
\end{theoremEnd}

\begin{proofEnd}\enskip
	By Lemmas \ref{lemma:ss:dual_var_bound} and (\ref{lemma:ss:al_decrease:bound}), we have
	{\small
		\begin{align*}
		&\L_\eta(\x^{k+1},\x_0^{k+1};\y^{k+1},\z^{k+1})-\L_\eta(\x^{k},\x_0^{k};\y^{k},\z^{k})\\
		&\le\textfrac{L\eta-1}{2\eta}\fronorm{\x^{k+1}-\x^k}-\textfrac{1}{2\eta}\fronorm{\x_0^{k+1}-\x_0^k}\\
			&+\textfrac{(2(1-\rho)+4)L^2\eta}{(1-\rho)}\fronorm{\x^k-\x^{k-1}}+\textfrac{2(1-\rho)+6}{(1-\rho)\eta}\fronorm{\vec{V}_0^k}.
		\end{align*}
	}Multiplying $\hat{C}>0$ to both sides of \eqref{lemma:ss:supporting_inequality:bound} and adding it to the above inequality gives
		{\small
		\begin{align*}
			&\L_\eta(\x^{k+1},\x_0^{k+1};\y^{k+1},\z^{k+1})+\textfrac{\hat{C}}{2\eta}\fronorm{\sqrt{\identity-\W}\x_0^{k+1}}\\
				&+\textfrac{\hat{C}}{2\eta}\left(\fronorm{\sqrt{\identity-\W}(\x_0^{k+1}-\x_0^k)}-\fronorm{\sqrt{\identity-\W}\x_0^{k}}\right)\\
					&+\textfrac{\hat{C}}{2\eta}\left(\norm{\vec{V}_0^k}_\W^2+\norm{\x_0^{k+1}-\x_0^k}_\W^2-\norm{\x_0^k-\x_0^{k-1}}_\W^2\right)\\
			&\le\L_\eta(\x^{k},\x_0^{k};\y^{k},\z^{k})+\textfrac{L\eta-1}{2\eta}\fronorm{\x^{k+1}-\x^k}-\textfrac{1}{2\eta}\fronorm{\x_0^{k+1}-\x_0^k}\\
				&+\textfrac{4L^2(1-\rho)\eta+8L^2\eta+\hat{C}L(1-\rho)}{2(1-\rho)}\fronorm{\x^k-\x^{k-1}}+\textfrac{\hat{C}}{2\eta}\fronorm{\x_0^k-\x_0^{k-1}}\\
				&+\textfrac{\hat{C}L\eta-\hat{C}}{2\eta}\fronorm{\x_0^{k+1}-\x_0^k}+\textfrac{12+4(1-\rho)-\hat{C}(1-\rho)}{2(1-\rho)\eta}\fronorm{\vec{V}_0^k}.
		\end{align*}
	}Since the minimum eigenvalue of $\identity+\W$ is $\rho_N>0$ in (\ref{spectral_gap}), it holds $\textfrac{12+4(1-\rho)}{(1-\rho)}\identity \preccurlyeq \hat C\left(\identity+\W\right)$ when $\hat C\ge\textfrac{12+4(1-\rho)}{(1-\rho)\rho_N}$. Furthermore, since $\frac{1}{1-\rho}\ge\frac{1}{\rho_N}$ for $\hat C\ge\textfrac{12+4(1-\rho)}{(1-\rho)^2}$, we have $0\le\textfrac{\hat C(1-\rho)-12-4(1-\rho)}{2(1-\rho)\eta}\fronorm{\vec{V}_0^k}+\textfrac{\hat C}{2\eta}\norm{\vec{V}_0^k}_\W^2$ by noticing
		{\small
			\begin{align}\label{lemma:c_bound-eq}
				&\textfrac{\hat C(1-\rho)-12-4(1-\rho)}{2(1-\rho)\eta}\fronorm{\vec{V}_0^k}+\textfrac{\hat C}{2\eta}\norm{\vec{V}_0^k}_\W^2\\
				&=\norm{\vec{V}_0^k}_{\textfrac{\hat C(1-\rho)-12-4(1-\rho)}{2(1-\rho)\eta}\identity+\textfrac{\hat C}{2\eta}\W}^2 \ge0.
			\end{align}
		}Thus,
	{\small
		\begin{align*}
				&\L_\eta(\x^{k+1},\x_0^{k+1};\y^{k+1},\z^{k+1})+\textfrac{\hat{C}}{2\eta}\fronorm{\sqrt{\identity-\W}\x_0^{k+1}}\\
				&+\textfrac{\hat{C}}{2\eta}\fronorm{\sqrt{\identity-\W}(\x_0^{k+1}-\x_0^k)}+\textfrac{\hat{C}}{2\eta}\norm{\x_0^{k+1}-\x_0^k}_\W^2\\
				&\le\L_\eta(\x^{k},\x_0^{k};\y^{k},\z^{k})+\textfrac{\hat{C}}{2\eta}\fronorm{\sqrt{\identity-\W}\x_0^{k}}+\textfrac{L\eta-1}{2\eta}\fronorm{\x^{k+1}-\x^k}\\
					&-\textfrac{1}{2\eta}\fronorm{\x_0^{k+1}-\x_0^k}+\textfrac{\hat{C}}{2\eta}\norm{\x_0^k-\x_0^{k-1}}_\W^2\\
					&+\textfrac{4L^2(1-\rho)\eta+8L^2\eta+\hat{C}L(1-\rho)}{2(1-\rho)}\fronorm{\x^k-\x^{k-1}}\\
					&+\textfrac{\hat{C}L\eta-\hat{C}}{2\eta}\fronorm{\x_0^{k+1}-\x_0^k}+\textfrac{\hat{C}}{2\eta}\fronorm{\x_0^k-\x_0^{k-1}}.
		\end{align*}
	}Combining like terms, simplifying
	{\small
		\begin{align*}
			&\textfrac{\hat{C}}{2\eta}\fronorm{\sqrt{\identity-\W}(\x_0^{k+1}-\x_0^k)}+\textfrac{\hat{C}}{2\eta}\norm{\x_0^{k+1}-\x_0^k}^2_{\W}\\
			&=\textfrac{\hat{C}}{2\eta}\norm{\x_0^{k+1}-\x_0^k}^2_{\identity-\W}+\textfrac{\hat{C}}{2\eta}\norm{\x_0^{k+1}-\x_0^k}^2_{\W}\\
			&=\textfrac{\hat{C}}{2\eta}\fronorm{\x_0^{k+1}-\x_0^k}
		\end{align*}
	}and noting that $\textfrac{\hat{C}}{2\eta}\norm{\x_0^{k}-\x_0^{k-1}}^2_{\W}\le\textfrac{\hat{C}}{2\eta}\fronorm{\x_0^{k}-\x_0^{k-1}}$ yields the desired result.
\end{proofEnd}

We now define a new Lyapunov function based on the results from Lemma~\ref{lemma:ss:second_al_decrease}. Fix $\hat{C}\triangleq\textfrac{16}{(1-\rho)^2}$ used in (\ref{lemma:ss:second_al_decrease:bound}) and define
{\small
	\begin{equation}\label{lyapunov_single_step}
	\begin{split} 	\hat{\Phi}^k&\triangleq\L_\eta(\x^{k},\x_0^{k};\y^{k},\z^{k})+\textfrac{\hat{C}}{2\eta}\fronorm{\sqrt{\identity-\W}\x_0^{k}}+\textfrac{\hat{C}}{\eta}\fronorm{\x_0^k-\x_0^{k-1}}\\
	 	&+\textfrac{4L^2(1-\rho)\eta+8L^2\eta+\hat{C}L(1-\rho)}{2(1-\rho)}\fronorm{\x^k-\x^{k-1}}.
	\end{split}
	\end{equation}
	}Before showing that this Lyapunov function is lower bounded, we first demonstrate the relation between the Lyapunov function at two consecutive iterations. Using Lemma~\ref{lemma:ss:second_al_decrease}, for all $k\ge0$, we have
{\small
	\begin{equation}\label{ss:lyapunov:relation}
	\begin{split}
			&\hat{\Phi}^{k+1}+\left(\textfrac{(1-\rho)-(\hat{C}+1)L(1-\rho)\eta-((1-\rho)+2)4L^2\eta^2}{2(1-\rho)\eta}\right)\fronorm{\x^{k+1}-\x^k}\\
			&+\left(\textfrac{1-\hat{C}L\eta}{2\eta}\right)\fronorm{\x_0^{k+1}-\x_0^k}\le\hat{\Phi}^k
	\end{split}
	\end{equation}
}which comes directly from adding and subtracting $\textfrac{4L^2(1-\rho)\eta+8L^2\eta+\hat{C}L(1-\rho)}{2(1-\rho)}\fronorm{\x^{k+1}-\x^{k}}$ to the left hand side of (\ref{lemma:ss:second_al_decrease:bound}), combining like terms, and using (\ref{lyapunov_single_step}). We now show that $\hat{\Phi}^k$ has a finite lower bound via the following proposition.

\begin{theoremEnd}[normal]{proposition}\label{prop:ss:lyapunov_lower_bound}
	Under Assumptions \ref{assumption:mixing_matrix} and \ref{assumption:objective_function}, let $\{(\x^k,\x_0^k;\y^k,\z^k)\}$ be obtained from Alg.~\ref{algo:single_step} or equivalently by \eqref{x_single_step_update} and \eqref{x0_update}-\eqref{z_update}. Choose $\hat{C}$ and $\eta$ such that
	{\small
		\begin{equation}\label{prop:ss:constants_bound}
		\small\hat{C}\triangleq\textfrac{16}{(1-\rho)^2}\text{ and }
		\eta<\textfrac{1}{2\hat{C}L}.
		\end{equation}
	}Then the Lyapunov function \eqref{lyapunov_single_step} is uniformly lower bounded. More specifically, for all $k\ge0,$
		\begin{equation}\label{prop:ss:lower_bound}
			\small\hat{\Phi}^k\ge\underline{f}-1>-\infty,
		\end{equation}
	where $\underline{f}$ is defined in Assumption~\ref{assumption:objective_function}.
\end{theoremEnd}

\begin{proofEnd}\enskip
	First, we have $\L_\eta(\x^{k},\x_0^{k};\y^{k},\z^{k})\le\hat{\Phi}^k$ for all $k,$ by the definition of $\hat{\Phi}^k$ in \eqref{lyapunov_single_step}. Second, by the definition of $\underline{f}$ in \eqref{assumption:lower_bound}, we have for any integer number $K\ge1$,
	{\small
		\begin{align}\label{prop:ss:eqn1}
			&\sum_{k=0}^{K-1}\left(\hat{\Phi}^{k+1}-\underline{f}\right)\ge\sum_{k=0}^{K-1}\left(\L_\eta(\x^{k+1},\x_0^{k+1};\y^{k+1},\z^{k+1})-\underline{f}\right)\cr
			&\stack{\eqref{eq:lower-bd-phik-sum}}{\ge}-\textfrac{\eta}{2}\fronorm{\y^0}-\textfrac{\eta}{2}\fronorm{\z^0}. 
		\end{align}
	}
	Thirdly, by \eqref{ss:lyapunov:relation} and the choice of $\hat{C}$ and $\eta$, it holds that
		\begin{equation}\label{prop:ss:eqn2}
			\small\hat{\Phi}^{k+1}\le\hat{\Phi}^k.
		\end{equation}
	Now assume that there exists a $k_0\ge0$ such that $\hat{\Phi}^{k_0}-\underline{f}<-1.$ Then \eqref{prop:ss:eqn2} gives $\hat{\Phi}^k-\underline{f}\le\hat{\Phi}^{k_0}-\underline{f}<-1$ for all $k\ge k_0.$ Hence, $\sum_{k=k_0+1}^{\infty}\left(\hat{\Phi}^k-\underline{f}\right)=-\infty$ which contradicts \eqref{prop:ss:eqn1}. Therefore, we conclude that $\hat{\Phi}^{k}-\underline{f}\ge-1$ for all $k\ge0$ and complete the proof.
\end{proofEnd}

We are now in position to prove the convergence rate results of Alg.~\ref{algo:single_step}.

\begin{theoremEnd}[normal]{thm}\label{therom:ss:convergence}
	Under the same conditions assumed in Proposition~\ref{prop:ss:lyapunov_lower_bound}, it holds that
		\begin{equation}\label{theorem:ss:convergence:eqn1}
				\small\textfrac{\hat{C}_1}{K}\sum_{k=0}^{K-1}\left(\fronorm{\x^{k+1}-\x^k}+\fronorm{\x_0^{k+1}-\x_0^k}\right)\le\textfrac{\Delta_{\hat{\Phi}}}{K}
			\end{equation}
	where $\Delta_{\hat{\Phi}}\triangleq\hat{\Phi}^0-\underline{f}+1$ and
		\begin{equation}\label{theorem:ss:convergence:c_bound}
		\small\hat{C}_1\triangleq\textfrac{L}{(1-\rho)^2}\le\textfrac{(1-\rho)-(\hat{C}+1)L(1-\rho)\eta-((1-\rho)+2)4L^2\eta^2}{2(1-\rho)\eta}
		\end{equation}
\end{theoremEnd}

\begin{proofEnd}\enskip
	Summing up \eqref{ss:lyapunov:relation} from $k=0$ to $K-1$ and dividing by $K$ results in
	{\small
		\begin{align*}
			&\left(\textfrac{(1-\rho)-(\hat{C}+1)L(1-\rho)\eta-((1-\rho)+2)4L^2\eta^2}{2(1-\rho)\eta}\right)\textfrac{1}{K}\sum_{k=0}^{K-1}\fronorm{\x^{k+1}-\x^k}\\
				&+\left(\textfrac{1-\hat{C}L\eta}{2\eta}\right)\textfrac{1}{K}\sum_{k=0}^{K-1}\fronorm{\x_0^{k+1}-\x_0^k}\\
			&\stack{(\ref{prop:ss:lower_bound})}{\le}\quad\textfrac{\hat{\Phi}^0-\underline{f}+1}{K}.
		\end{align*}
	}By the choice of $\hat{C}$ and $\eta$, the $\hat{C}_1$ defined in \eqref{theorem:ss:convergence:c_bound} is positive, and the above inequality implies the desired result.
\end{proofEnd}

\printProofs[singlestep]

 \section{Proofs of the complexity results}\label{appendix:complexity}
 
 \printProofs[complexity]

\end{appendices}

\end{document}